\def\diagramme #1{\vskip 4mm \centerline {#1} \vskip 4mm}
\newcommand{\id}{\mathrm{id}}
 \font \eightrm=cmr8
\newcommand{\nc}{\newcommand}
\newcommand{\treeA}{\scalebox{0.5}{{\parbox{0.5pc}{
  \begin{picture}(30,45) (75,-60)
    \SetWidth{1.5}
    \SetColor{Black}
    \Vertex(90,-23){4}
    \Line(90,-30)(75,-60)
    \Line(90,-30)(105,-60)
    \Line(90,-15)(90,-30)
  \end{picture}}}}}
\newcommand{\treeB}{\scalebox{0.5}{{\parbox{0.5pc}{
  \begin{picture}(30,45) (75,-60)
    \SetWidth{1.5}
    \SetColor{Black}
    \Vertex(80,-50){4}
    \Line(90,-30)(75,-60)
    \Line(90,-30)(105,-60)
    \Line(90,-15)(90,-30)
  \end{picture}}}}}
\newcommand{\treeC}{\scalebox{0.5}{{\parbox{0.5pc}{
  \begin{picture}(30,45) (75,-60)
    \SetWidth{1.5}
    \SetColor{Black}
    \Vertex(100,-50){4}
    \Line(90,-30)(75,-60)
    \Line(90,-30)(105,-60)
    \Line(90,-15)(90,-30)
  \end{picture}}}}}
\newcommand{\treeD}{\scalebox{0.5}{{\parbox{0.5pc}{
  \begin{picture}(30,45) (75,-60)
    \SetWidth{1.5}
    \SetColor{Black}
    \Vertex(100,-50){4}
    \Vertex(80,-50){4}
    \Line(90,-30)(75,-60)
    \Line(90,-30)(105,-60)
    \Line(90,-15)(90,-30)
  \end{picture}}}}}
\nc{\ignore}[1]{{}}
\nc{\mrm}[1]{{\rm #1}}
\nc{\dirlim}{\displaystyle{\lim_{\longrightarrow}}\,}
\nc{\invlim}{\displaystyle{\lim_{\longleftarrow}}\,}
\nc{\vep}{\varepsilon} \nc{\ep}{\epsilon}
\nc{\sigmat}{\widetilde\sigma}
\nc{\ostar}{\overline{*}}
\nc{\mchar}{\mrm{Char}}
\nc{\Hom}{\mrm{Hom}}
\nc{\remark}{\noindent{\bf{Remark:}}}
\nc{\remarks}{\noindent{\bf{Remarks:}}}
 \nc{\delete}[1]{}
 \nc{\grad}[1]{^{({#1})}}
 \nc{\fil}[1]{_{#1}}
\nc{\BA}{{\Bbb A}} \nc{\CC}{{\Bbb C}} \nc{\DD}{{\Bbb D}}
\nc{\EE}{{\Bbb E}} \nc{\FF}{{\Bbb F}} \nc{\GG}{{\Bbb G}}
\nc{\HH}{{\Bbb H}} \nc{\LL}{{\Bbb L}} \nc{\NN}{{\Bbb N}}
\nc{\PP}{{\Bbb P}} \nc{\QQ}{{\Bbb Q}} \nc{\RR}{{\Bbb R}}
\nc{\TT}{{\Bbb T}} \nc{\VV}{{\Bbb V}} \nc{\ZZ}{{\Bbb Z}}
\nc{\Cal}[1]{{\mathcal {#1}}}
\nc{\mop}[1]{\mathop{\hbox {\rm #1} }\nolimits}
\nc{\smop}[1]{\mathop{\hbox {\eightrm #1} }\nolimits}
\nc{\mopl}[1]{\mathop{\hbox {\rm #1} }\limits}
\nc{\frakg}{{\frak g}}
\nc{\g}[1]{{\frak {#1}}}
\def \restr#1{\mathstrut_{\textstyle |}\raise-8pt\hbox{$\scriptstyle #1$}}
\def \srestr#1{\mathstrut_{\scriptstyle |}\hbox to
  -1.5pt{}\raise-4pt\hbox{$\scriptscriptstyle #1$}}
\nc{\wt}{\widetilde}
\nc{\wh}{\widehat}
\nc{\un}{\hbox{\bf 1}}
\nc{\redtext}[1]{\textcolor{red}{\tt #1}}
\nc{\bluetext}[1]{\textcolor{blue}{#1}}
\nc{\comment}[1]{[[{\tt {#1}}]] }
\nc{\R}{\mathbb R}
\nc\fleche[1]{\mathop{\hbox to #1 mm{\rightarrowfill}}\limits}
\def\semi{\mathrel{\times}\kern -.85pt\joinrel\mathrel{\raise
    1.4pt\hbox{${\scriptscriptstyle |}$}}}
\nc{\np}{/\hskip -2.3mm\pi}
\nc{\snp}{/\hskip -1.8mm\pi}
\def\shu{\joinrel{\!\scriptstyle\amalg\hskip -3.1pt\amalg}\,}
\def\sshu{\joinrel{\hskip 2pt\scriptscriptstyle\amalg\hskip -2.5pt\amalg}\,}
\def\shu{\joinrel{\!\scriptstyle\amalg\hskip -3.1pt\amalg}\,}
\def\qshu{\joinrel{\!\scriptstyle\amalg\hskip -3.1pt\amalg}\,\hskip -8pt\hbox{-}\hskip 5pt}
\def\sshu{\joinrel{\,\scriptscriptstyle\amalg\hskip -2.5pt\amalg}\,}
\def\sqshu{\joinrel{\,\scriptscriptstyle\amalg\hskip -2.5pt\amalg}\,\hskip -7pt\hbox{-}\hskip 4pt}
\def\sqshubis{\joinrel{\,\scriptscriptstyle\amalg\hskip -2.5pt\amalg}\,\hskip -8pt\hbox{-}\hskip 4pt}
\nc{\surj}{\to\hskip -3mm \to}
\newtheorem{thm}{Theorem}
\newtheorem{exam}{Example}
\newtheorem{cor}[thm]{Corollary}
\newtheorem{prop}[thm]{Proposition}
\newtheorem{rmk}[thm]{Remark}
\begin{document}

\title[Double shuffle structure of \lowercase{$q$}MZV\lowercase{s}]
	{Unfolding the double shuffle structure of \lowercase{$q$}MZV\lowercase{s}}

\author[J.~Castillo]{Jaime Castillo Medina}
\address{Univ.~de Val\`encia,
        	       Facultat de Ci\`encies Matem\`atiques,
	       C/ Doctor Moliner, 50, 46100 Burjassot-Valencia, Spain.}       
         \email{jaicasme@alumni.uv.es}

\author[K.~Ebrahimi-Fard]{Kurusch Ebrahimi-Fard}
\address{Inst.~de Ciencias Matem\'aticas,
		C/ Nicol\'as Cabrera, no.~13-15, 28049 Madrid, Spain.
		On leave from Univ.~de Haute Alsace, Mulhouse, France}
         \email{kurusch@icmat.es, kurusch.ebrahimi-fard@uha.fr}         
         \urladdr{www.icmat.es/kurusch}

\author[D.~Manchon]{Dominique Manchon}
\address{Univ. Blaise Pascal,
         C.N.R.S.-UMR 6620,
         63177 Aubi\`ere, France}       
         \email{manchon@math.univ-bpclermont.fr}
         \urladdr{http://math.univ-bpclermont.fr/~manchon/}

\date{17 December, 2013}         

\maketitle

\tableofcontents
 
\vspace{0.3cm}

\begin{abstract}
We exhibit the double $q$-shuffle structure for the \lowercase{$q$}MZVs introduced by Y.~Ohno, J.~Okuda and W.~Zudilin in the recent work \cite{OhOkZu}.
\end{abstract}

\noindent


\section{Introduction}
\label{sect:intro}

In \cite{OhOkZu} Y.~Ohno, J.~Okuda and W.~Zudilin proposed for positive natural numbers $n_1,n_2, \ldots, n_k \in \mathbb{N}$, and for any complex number $q$ with $|q|<1$, the following iterated infinite series:
\begin{equation}
\label{qMZV-z}
	\mathfrak{z}_q(n_1,\ldots,n_k)  	
	:=(1-q)^w\sum_{m_1 > \dots > m_k > 0}
			\frac{q^{m_1}}{(1-q^{m_1})^{n_1} \cdots (1-q^{m_k})^{n_k}} 						
\end{equation}
of depth $k \ge 1$ and weight $w := n_1 + \cdots + n_k$ as a $q$-analog of classical multiple zeta values (MZVs). Indeed, in the limit $q \to 1$ the $q$-number $[m]_q:=(1-q^m)(1-q)^{-1}$ becomes $m \in \mathbb{N}$, and the above infinite series reduces to the corresponding classical MZV of depth $k$ and weight $w$, defined for positive natural numbers $n_1, \ldots, n_k \in \mathbb{N}$ by:
\begin{equation}
\label{MZV}
	\zeta(n_1,\dots,n_k):=\sum_{m_1  > \dots > m_k > 0}\frac{1}{ m_1^{n_1}\cdots m_k^{n_k}}.     
\end{equation}
The extra condition $n_1>1$ is required in order that the right-hand side of \eqref{MZV} converges. The seminal works of M.~E.~Hoffman \cite{Hoffman1} and D.~Zagier \cite{Zagier} initiated a systematic study of MZVs. These real numbers appear in several contexts, e.g., arithmetic and algebraic geometry, algebra, combinatorics, mathematical physics, computer sciences, quantum groups and knot theory. See \cite{CartierMZV, Waldschmidt,Waldschmidt2} for introductory reviews. 

\medskip

M.~Kontsevich noticed that series (\ref{MZV}) has a simple representation in terms of iterated Riemann integrals \cite{Zagier}:   
\begin{equation} 
\label{ChenRepIntro}
	\zeta(n_1,\ldots, n_k) 
	= \idotsint\limits_{0\leq t_w \leq \cdots \leq t_1 \leq 1} 
			\frac{dt_1}{\tau_1(t_1)} \cdots \frac{dt_w}{\tau_w(t_w)},
\end{equation}
where $\tau_i(x) = 1-x$ if $i \in \{h_1,h_2,\ldots,h_k\}$, $h_j:=n_1+n_2+\cdots+n_j$, and $\tau_i(x) = x$ otherwise.

Multiplying MZVs represented in either form, i.e., (\ref{MZV}) or (\ref{ChenRepIntro}), results in $\mathbb{Q}$-linear combinations of MZVs. Hence, the $\mathbb{Q}$-vector space spanned by the real numbers (\ref{MZV}) forms an algebra. The weight of a product of MZVs is defined as the sum of the weights. The product of MZVs arising from the sum representation (\ref{MZV}) is usually called stuffle, or quasi-shuffle, product. It preserves the weight, but is not homogenous with respect to the depth. As an example, we look at the quasi-shuffle product of two MZVs of weight $a,b >1$, which is given by the Nielsen reflexion formula \cite{Waldschmidt2}:
$$
	\zeta(a)\zeta(b)=\zeta(a,b)+\zeta(b,a) + \zeta(a+b).
$$
Note that the last term is of depth one. The so-called shuffle product derives from integration by parts in the integral representation (\ref{ChenRepIntro}) of MZVs. It is homogenous with respect to both weight and depth. For natural numbers $a,b>1$, Euler's famous decomposition formula expresses the product of two single zeta values as:
\begin{equation}
\label{classical-euler}
	\zeta(a)\zeta(b)=
	\sum_{i=0}^{a-1} {i+b-1 \choose b-1} \zeta(b+i,a-i)
	+\sum_{j=0}^{b-1} {j+a-1 \choose a-1} \zeta(a+j,b-j).
\end{equation}
It is proven using integration by parts. As a result, from the interplay between the sum and the integral representations, products of MZVs can be written in two ways as $\mathbb{Q}$-linear combinations of MZVs. This leads to the so-called double shuffle relations among MZVs. The first example is given for $a=2=b$:
$$
	2\zeta(2,2) + \zeta(4) = 4\zeta(3,1) + 2\zeta(2,2),
$$
which implies $\zeta(4)=4\zeta(3,1)$. Let us also mention the regularization --or Hoffman's-- relations  \cite{IKZ}. The simplest one writes:
\begin{equation}
\label{euler}
	\zeta(2,1)=\zeta(3),
\end{equation}
and was already known by L.~Euler. This is just the tip of an iceberg. MZVs satisfy many relations over $\mathbb{Q}$, and this gives rise to rich algebro-combinatorial structures. The latter are described abstractly in terms of so-called shuffle and quasi-shuffle --Hopf--  algebras, which together encode the double shuffle relations for MZVs. We refer the reader to \cite{Hoffman2,Zudilin} for detailed reviews on the algebraic structures related to MZVs. 

\smallskip 

In a recent work \cite{CEM} we found a $q$-generalization of Euler's decomposition formula (\ref{classical-euler}) for so-called modified $q$-multiple zeta values ($q$MZVs):
\begin{equation}
\label{mod-qMZV-z}
	\bar{\mathfrak{z}}_q(n_1,\ldots,n_k)  	
	:=(1-q)^{-w}\mathfrak{z}_q(n_1,\ldots,n_k)
	 =\sum_{m_1 > \dots > m_k > 0} \frac{q^{m_1}}{(1-q^{m_1})^{n_1} \cdots (1-q^{m_k})^{n_k}} 					
\end{equation}
of depth $k \ge 1$ and weight $w := n_1+\cdots+n_k$ \cite{OhOkZu}. For $1 < a \leq b \in \mathbb{N}$ the product of two such series writes as:
\begin{eqnarray}
\label{Z-Euler}
	\bar{\mathfrak{z}}_q(a)\bar{\mathfrak{z}}_q(b) &=&
	\sum_{l=0}^{a-1}\ \sum_{k=0}^{a-1-l}  
		(-1)^k {l+b-1 \choose b-1}{b \choose k} \bar{\mathfrak{z}}_q(b+l,a-l-k) 						\\
	& & \hspace{0.5cm}
	 	+ \sum_{l=0}^{b-1}\ \sum_{k=0}^{\min(a,b-1-l)}  
		(-1)^k {l+a-1 \choose a-1}{a \choose k} \bar{\mathfrak{z}}_q(a+l,b-l-k) 				\nonumber\\
	& & \hspace{1cm}
		- \sum_{k=1}^{a} \beta_{a-k} \bar{\mathfrak{z}}_q(a+b-k)
			+ \sum_{j=1}^{a-1} \alpha_{a+b-1-j} \delta \bar{\mathfrak{z}}_q(a+b-1-j),		\nonumber
\end{eqnarray}
with coefficients $\beta_j,\alpha_k$ which both depend on the weights $a$ and $b$:
\begin{equation}
\label{coeff}
	\beta_j=  (-1)^{a-j}{j+b-1 \choose j,j+b-a,a-j-1} 
	\qquad\
	\alpha_k= \sum_{j=b}^{k} \frac{(-1)^{a+b-j}}{1-j}{j-1 \choose j-b,j-a,a+b-j-1}.
\end{equation}
Observe that the last summand on the right hand side of (\ref{Z-Euler}) contains the derivation $\delta:=q\frac{d}{dq}$. We shall therefore consider this identity in the differential algebra generated by (\ref{mod-qMZV-z}), with respect to the derivation $\delta$. One can show that by multiplying both sides by $(1-q)^{a+b}$, in the limit $q \nearrow 1$ formula (\ref{Z-Euler}) reduces to Euler's decomposition formula (\ref{classical-euler}).\\

Several $q$-analogs of MZVs have appeared in the literature. Especially the one proposed by Bradley in \cite{Bradley1}, extending the definition of  $q$-zeta function proposed by M. Kaneko, N. Kurokawa and M. Wakayama in \cite{KKW02}, has been studied in detail (see e.g.~\cite{OhOkZu,Zhao,Zudilin}). Bradley's $q$-analog of MZVs can be related to linear combinations of series (\ref{qMZV-z}). See \cite{CEM} for details. Another $q$-analog of MZVs has been recently discovered by H. Bachmann and U. K\"uhn \cite{BK13}: the algebra of multiple divisor functions. This version fits well with multiple Eisenstein series and modularity issues. Let us also mention the model proposed by K. G. Schlesinger \cite{Schlesinger}, in which the modified $q$-MZVs $\mathfrak z_q^S(n_1,\ldots,n_k)$ are defined by \eqref{qMZV-z} with $q^{m_1}$ replaced by $1$ in the numerator. Finally one can mention the recent work of A. Okounkov \cite{Okounkov}, where deep conjectures are proposed for yet another model, in which the numerators are based on palindromic polynomials.\\
 
In \cite{CEM}  we also showed the quasi-shuffle like product of modified $q$MZVs (\ref{mod-qMZV-z}) of weight $a, b>1$:
\begin{equation}
\label{mod-q-shuffle}
    	\bar{\mathfrak{z}}_q(a)\bar{\mathfrak{z}}_q(b) 
	= \bar{\mathfrak{z}}_q(a,b) + \bar{\mathfrak{z}}_q(b,a) + \bar{\mathfrak{z}}_q(a+b) 				
		- \bar{\mathfrak{z}}_q(a,b-1) - \bar{\mathfrak{z}}_q(b,a-1) - \bar{\mathfrak{z}}_q(a+b-1).
\end{equation}
We started to explore how the resulting double $q$-shuffle relations may be used to express the $\delta$-derivation terms in (\ref{Z-Euler}) as linear combinations of $q$MZVs. The simplest example yields for $a=2=b$:
$$
	\delta  \bar{\mathfrak{z}}_q(2) = 
	  4 \bar{\mathfrak{z}}_q(3,1) - 2 \bar{\mathfrak{z}}_q(2,1)
		-  \bar{\mathfrak{z}}_q(2)
		+ 3 \bar{\mathfrak{z}}_q(3) 
	 	- \bar{\mathfrak{z}}_q(4). 
$$

\medskip

In this paper we shall complement the results in \cite{CEM} by unfolding the double $q$-shuffle structure for the modified $q$MZVs defined in (\ref{mod-qMZV-z}). Contrarily to what happens in the classical $q=1$ case, these $q$-multiple zeta values $\mathfrak{z}_q(n_1,\ldots,n_k)$ and $\bar{\mathfrak{z}}_q(n_1,\ldots,n_k)$ make sense for any $n_1,\ldots,n_k\in{\mathbb Z}$, regardless of the sign. Hence no further regularization issues arise in this context. The $q$-quasi-shuffle structure will then take place in terms of words with letters in an alphabet indexed by $\mathbb Z$, and the $q$-shuffle structure will be explained in terms of a particular \textsl{weight $-1$ differential Rota--Baxter algebra} \cite{GK} with invertible Rota--Baxter operator.  We will use these double $q$-shuffle relations to derive an expression for $\delta$-terms in terms of modified $q$MZVs.

\medskip

A double shuffle picture for $q$-multiple zeta values has also been indicated recently in the Bradley model by Y.~Takeyama \cite{Takeyama}, for positive arguments $n_1,\ldots,n_k$.  Both approaches to the $q$-shuffle relations use a representation of $q$-multiple zeta values by a $q$-analogue of multiple polylogarithms with one variable. This representation is less direct in the Bradley model than in the Ohno--Okuda--Zudilin model, and needs a ``twisting" of the words (see Lemma 2.10 loc.~cit.) which is not needed in our approach.

\smallskip

The paper is organized as follows: after a quick review of the double shuffle structure for classical MZVs in Section \ref{sect:algMZVs}, we introduce in Section \ref{sect:Jackson} the Jackson integral $J$, the $q$-difference operator $D_q$ defined by $D_q[f](t):=f(t)-f(qt)$ and the $q$-summation operator $P_q$ defined by $P_q[f](t):=f(t)+f(qt)+f(q^2t)+\cdots$. Operators $P_q$ and $D_q$ are mutually inverse in a suitable space of formal series. For any $a,b \in \mathbb Z$, explicit expressions for products $P_q^a[f]P_q^b[g]$ are given in terms of $P_q^i\big[P_q^j[f]g\big]$, $P_q^i\big[fP_q^j[g]\big]$ and $P_q^i[fg]$ for some $i,j \in \mathbb Z$.

\medskip

In Section \ref{sect:qMZV} we show how the modified Ohno--Okuda--Zudilin $q$MZVs can be expressed in terms of $q$-summation and $q$-difference operators, namely:
\begin{equation*}
	\bar{\mathfrak{z}}_q(n_1,\ldots,n_k) = P_q^{n_1}\big[\bar{y} P_q^{n_2}[\bar y\cdots P_q^{n_k}[\bar{y}]\cdots] \big](q),
\end{equation*}
with $\bar y(t):=\frac{t}{1-t}$, and we give a complete picture of the double shuffle structure. The $q$-quasi-shuffle algebra is built on the space of words on the alphabet $\wt Y:=\{z_n,\,n\in\mathbb Z\}$. The $q$-quasi-shuffle product $\qshu$ is the ordinary quasi-shuffle product, twisted in a certain sense by the operator $T: z_j \mapsto z_j-z_{j-1}$. The $q$-shuffle algebra consists of words with letters from the alphabet $\wt X:=\{d,y,p\}$, all ending with $y$, and subject to $dp=pd=\un$, so that one can also use the notation $p^{-1}=d$. The $q$-shuffle product is given recursively (with respect to the length of words) by:
\begin{eqnarray*}
	(yv)\shu u=v\shu (yu)	 &=&	 y(v\shu u),\\
				dv\shu du  &=& v\shu du + dv\shu u - d(v\shu u),\\
		  		  pv\shu pu&=& p(v\shu pu) + p(pv\shu u) - p(v\shu u),\\
		dv\shu pu=pu\shu dv &=& d(v\shu pu) + dv\shu u - v\shu u
\end{eqnarray*}
for any words $v$ and $u$. We show that the product $\shu$ is commutative and associative. Let us mention that it would be nice to have a purely combinatorial description of this product. The double shuffle relations then simply write:
\begin{eqnarray*}
	\bar{\mathfrak{z}}_q^{\sshu}(v)\bar{\mathfrak{z}}_q^{\sshu}(u)	 
	&=&\bar{\mathfrak{z}}_q^{\sshu}(v\shu u),\\
	\bar{\mathfrak{z}}_q^{\sqshu}(v')\bar{\mathfrak{z}}_q^{\sqshu}(u')
	&=&\bar{\mathfrak{z}}_q^{\sqshu}(v'\qshu u')
\end{eqnarray*}
for any words $v,u$ (respectively~ $v',u'$) of letters form the alphabet $\wt X$ (respectively~$\wt Y$), with:
\begin{equation*}
	\bar{\mathfrak{z}}_q(n_1,\ldots,n_k)	
						= \bar{\mathfrak{z}}_q^{\sshu}(p^{n_1}y\cdots p^{n_k}y)
						= \bar{\mathfrak{z}}_q^{\sqshu}(z_{n_1}\cdots z_{n_k}).
\end{equation*}
Finally we give an explicit expression of  $\delta \bar{\mathfrak{z}}_q(n_1,\ldots,n_k)$ for any $n_1,\ldots n_k\in\mathbb Z$, and state a $q$-analog of Hoffman's regularization relations. Note that we have not explored possible Hopf algebra structures at this stage, and that the limit for $q\to 1$ still needs to be studied in detail. We plan to address these issues in a future work.

\vspace{0.5cm}
{\bf{Acknowledgements}}: We thank W.~Zudilin for his most valuable comments and encouragements, as well as H.~Bachmann and U.~K\"uhn for illuminating discussions. The first author gratefully acknowledges support by the ICMAT Severo Ochoa Excellence Program. He would like to thank ICMAT for warm hospitality during his visit, and the third author as well. The second author is supported by a Ram\'on y Cajal research grant from the Spanish government. The second and third authors are supported by CNRS (GDR Renormalisation).\\[.3cm]


\section{Regularized double shuffle relations for MZVs}
\label{sect:algMZVs}

Let us briefly recall the double shuffle algebra for MZVs. The reader looking for details may consult the references \cite{Hoffman2,Waldschmidt,Zudilin}.

In the introduction we have seen that MZVs are either represented by iterated sums (\ref{MZV}) or in terms of iterated integrals (\ref{ChenRepIntro}). Thus it is convenient to write them in terms of words. In view of \eqref{MZV} and \eqref{ChenRepIntro}, this can be done by using two alphabets:
\begin{equation*}
	X:=\{x_0,x_1\},\hskip 12mm Y:=\{y_1,y_2,y_3,\ldots\}.
\end{equation*}
We denote by $X^*$ respectively $Y^*$ the set of words with letters in $X$ respectively $Y$. The length of words in $X^*$ and $Y^*$ is defined by the number of letters, and is denoted by $|u|$ for $u$ in $X^*$ or $Y^*$. For both $X^*$ and $Y^*$ we denote the empty word by $\un$.

The vector space ${\mathbb Q}\langle X\rangle$, which is freely generated by $X^*$, is a commutative algebra for the shuffle product, which is defined by:
\begin{equation}
\label{shu-prod}
	v_1\cdots v_p\shu v_{p+1}\cdots v_{p+q}:=
		\sum_{\sigma\in\smop{Sh}(p,q)}v_{\sigma_1^{-1}}\cdots v_{\sigma_{p+q}^{-1}}
\end{equation}
with $v_j\in X$, $j\in\{1,\ldots ,p+q\}$. Here, $\mop{Sh}(p,q)$ is the set of $(p,q$)-shuffles, i.e., permutations $\sigma$ of $\{1,\ldots, p+q\}$ such that $\sigma_1<\cdots < \sigma_p$ and $\sigma_{p+1}<\cdots <\sigma_{p+q}$. We define $\un \shu v = v \shu \un =v$ for $v \in X^*$. The product is homogenous with respect to the length of words. 

The vector space ${\mathbb Q}\langle Y\rangle$ is freely generated by $Y^*$. It is a commutative algebra for the quasi-shuffle product, which is defined as follows: a $(p,q)$-quasi-shuffle of type $r$ is a surjection: 
$$
	\sigma:\{1,\ldots, p+q\}\surj\{1,\ldots, p+q-r\},
$$
such that  $\sigma_1<\cdots < \sigma_p$ and $\sigma_{p+1}<\cdots <\sigma_{p+q}$. Denoting by $\mop{Qsh}(p,q;r)$ the set of $(p,q)$-quasi-shuffles of type $r$, the formula for the quasi-shuffle product $\qshu$ is:
\begin{equation}
\label{qshu-prod}
	u_1\cdots u_p \qshu u_{p+1}\cdots u_{p+q} :=
		\sum_{r\ge 0}\,\sum_{\sigma\in\smop{Qsh}(p,q;r)}u_1^\sigma\cdots u_{p+q-r}^\sigma
\end{equation}
with $u_j \in Y$, $j\in\{1,\ldots ,p+q\}$, and where $u_j^{\sigma}$ is the \textsl{internal product} of letters in the set $\sigma^{-1}(\{j\})$, which contains one or two elements. This internal product on $Y$ is defined by $[y_ky_l]:=y_{k+l}$. As before, we define $\un \qshu u = u \qshu \un =u$ for $u \in Y^*$.  We notice that the right hand side of (\ref{qshu-prod}) is not homogenous with respect to the length of words.\\

Both (\ref{shu-prod}) and (\ref{qshu-prod}) can be defined recursively. Indeed, for the shuffle product one can show that:
\begin{equation}
\label{shu-prod-rec}
	v_1\cdots v_p \shu v_{p+1}\cdots v_{p+q} = v_1\big( v_2\cdots v_p \shu v_{p+1}\cdots v_{p+q}  \big) 
										+ v_{p+1}\big( v_1\cdots v_p \shu v_{p+2}\cdots v_{p+q} \big). 
\end{equation}
The quasi-shuffle product is defined recursively by:
\begin{eqnarray}
\label{qshu-prod-rec}
	(u_1\cdots u_p)\qshu(u_{p+1}\cdots u_{p+q}) &=& u_1\big( u_2\cdots u_p \qshu u_{p+1}\cdots u_{p+q}  \big) \\
							& &	\hspace{1cm}+ u_{p+1}\big(  u_1\cdots u_p \qshu u_{p+2}\cdots u_{p+q} \big)\nonumber\\
							& &	\hspace{2.5cm}	+ [u_1u_{p+1}]\big(  u_2\cdots u_p \qshu u_{p+2}\cdots u_{p+q} \big).
												 \nonumber
\end{eqnarray}							  

We denote by $Y^*_{\smop{conv}}$ the submonoid of words $u=u_1\cdots u_r$ with $u_1\not =y_1$, and we set $X^*_{\smop{conv}}:=x_0X^*x_1$. An injective monoid morphism is given by changing the letter $y_n$ into the word $x_0^{n-1}x_1$, namely:
\begin{eqnarray*}
	{\frak s}:Y^*&\longrightarrow & X^*\\
	y_{n_1}\cdots y_{n_r}&\longmapsto & x_0^{n_1-1}x_1\cdots  x_0^{n_r-1}x_1.
\end{eqnarray*}
It restricts to a monoid isomorphism from $Y^*_{\smop{conv}}$ onto $X^*_{\smop{conv}}$. As notation suggests,  $Y^*_{\smop{conv}}$ and $X^*_{\smop{conv}}$ are two convenient ways to symbolize convergent MZVs through representations \eqref{MZV} and \eqref{ChenRepIntro}, respectively. Indeed, we can define:
$$
	\int_1(x_0^{n_1-1}x_1\cdots  x_0^{n_k-1}x_1):= \idotsint\limits_{0\leq t_w \leq \cdots \leq t_1 \leq 1} 
			\frac{dt_1}{\tau_1(t_1)} \cdots \frac{dt_w}{\tau_w(t_w)},
$$
where $\tau_i(s) = 1-s$ if $i \in \{h_1,h_2,\ldots,h_k\}$, $h_j:=n_1+n_2+\cdots+n_j$, and $\tau_i(s) = s$ otherwise. For the iterated sums we define:
$$
	\Sigma(y_{n_1}\cdots y_{n_k}):= \sum_{m_1  > \dots > m_k > 0}\frac{1}{ m_1^{n_1}\cdots m_k^{n_k}}.     
$$

The following notation is commonly adopted:
\begin{eqnarray*}
	\zeta_{\sqshu}(y_{n_1}\cdots y_{n_r})&:=&\Sigma(y_{n_1}\cdots y_{n_r})\\
								&=&\zeta(n_1,\ldots, n_r)\\
								&=:&\int_1(x_0^{n_1-1}x_1\cdots  x_0^{n_r-1}x_1)
								=\zeta_{\sshu}\big({\frak s}(y_{n_1}\cdots y_{n_r})\big),
\end{eqnarray*}
and extended to finite linear combinations of convergent words by linearity. The relation:
 $$
 	\zeta_{\sqshu}=\zeta_{\sshu}\circ{\frak s}
$$
is obviously verified. The map $\Sigma$ defines an algebra homomorphism, i.e., the quasi-shuffle relations then write:
\begin{equation}
\label{qsh-rel}
	\zeta_{\sqshu}(u\qshu u')=\zeta_{\sqshu}(u)\zeta_{\sqshu}(u')
\end{equation}
for any $u,u'\in Y^*_{\smop{conv}}$. The map $\int_1$ defines another algebra homomorphism, and the shuffle relations write:
\begin{equation}
\label{sh-rel}
	\zeta_{\sshu}(v\shu v')=\zeta_{\sshu}(v)\zeta_{\sshu}(v')
\end{equation}
for any $v,v'\in X^*_{\smop{conv}}$. 

By fixing an arbitrary value $\theta$ to $\zeta(1)$ and setting $\zeta_{\sqshu}(y_1)=\zeta_{\sshu}(x_1)=\theta$, it is possible to extend $\zeta_{\sqshu}$, respectively $\zeta_{\sshu}$, to all words in $Y^*$, respectively to $X^*x_1$, such that \eqref{qsh-rel}, respectively \eqref{sh-rel}, still hold. It is also possible to extend $\zeta_{\sshu}$ to a map defined on $X^*$ by fixing an arbitrary value $\theta'$ to $\zeta_{\sshu}(x_0)$, such that \eqref{sh-rel} is still valid. It is natural to suppose $\theta'=\theta$ for symmetry reasons, reflecting the following formal equality between two infinite quantities:
$$
	\int_0^1\frac{dt}{t}=\int_0^1\frac{dt}{1-t}.
$$
It is easy to show that for any words $v\in X^*$ or $u\in Y^*$, the expressions $\zeta_{\sshu}(v)$ and $\zeta_{\sqshu}(u)$ are polynomials with respect to $\theta$. It is no longer true that the extended $\zeta_{\sqshu}$ coincides with the extended $\zeta_{\sshu}\circ{\frak s}$, but the defect can be explicitly written:

\begin{thm}[L. Boutet de Monvel, D. Zagier \cite{Zagier}]
\label{bmz}
There exists an infinite-order invertible differential operator $\rho:\R[\theta]\to\R[\theta]$ such that
\begin{equation}
	\zeta_{\sshu}\circ{\frak s}=\rho\circ\zeta_{\sqshubis}.
\end{equation}
The operator $\rho$ is explicitly given by the series:
\begin{equation}
	\rho=\exp\left(\sum_{n\ge 2}\frac{(-1)^n\zeta(n)}{n}\left(\frac{d}{d\theta}\right)^n\right).
\end{equation}
\end{thm}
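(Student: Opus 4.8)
The plan is to reduce the statement to a single generating‑function computation by using the two classical \emph{structure theorems}. Let $\mathfrak H^1$ be the span of the empty word and of all $X$‑words ending in $x_1$, and $\mathfrak H^0=\mathbb Q\oplus x_0\mathbb Q\langle x_0,x_1\rangle x_1$ its convergent subspace (under $\mathfrak s$ these correspond to the spans of $Y^*$ and of $Y^*_{\smop{conv}}$). Then $(\mathfrak H^1,\shu)$ is the polynomial algebra $\mathfrak H^0[x_1]$ over $\mathfrak H^0$ in the single shuffle‑generator $x_1$, and $(\mathfrak H^1,\qshu)$ is the polynomial algebra $\mathfrak H^0[y_1]$ in the single quasi‑shuffle‑generator $y_1$. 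Hence $\zeta_{\sshu}$ and $\zeta_{\sqshu}$ are the unique homomorphisms for their respective products extending $\zeta$ on $\mathfrak H^0$ and sending $x_1$, resp.\ $y_1$, to $\theta$, so $\zeta_{\sshu}\circ\mathfrak s$ and $\zeta_{\sqshu}$ carry \emph{the same boundary data}; the discrepancy between them is forced entirely by the fact that $\mathfrak s$ is a morphism of monoids but \emph{not} of these two algebra structures. First I would write $w\in\mathfrak H^1$ in its quasi‑shuffle normal form $w=\sum_{j\ge0}v_j\qshu y_1^{\qshu j}$ with $v_j\in\mathfrak H^0$, so that $\zeta_{\sqshu}(w)=\sum_j\zeta(v_j)\theta^j$; then $\zeta_{\sshu}\circ\mathfrak s=\rho\circ\zeta_{\sqshu}$ is equivalent to
\begin{equation*}
  \zeta_{\sshu}\big(\mathfrak s(v\qshu y_1^{\qshu j})\big)=\zeta(v)\,\rho(\theta^j)\quad(v\in\mathfrak H^0,\ j\ge0),
  \quad\text{i.e.}\quad
  \zeta_{\sshu}\big(\mathfrak s(v\qshu \exp_{\sqshu}(uy_1))\big)=\zeta(v)\,A(u)\,e^{u\theta},
\end{equation*}
where $A(u):=\exp\!\big(\sum_{n\ge2}\tfrac{(-1)^n\zeta(n)}{n}u^n\big)=e^{\gamma u}\Gamma(1+u)$, $\exp_{\sqshu}(\xi):=\sum_m\tfrac{1}{m!}\xi^{\qshu m}$, and the equivalence uses $A(u)e^{u\theta}=\rho(e^{u\theta})$ (since $(\tfrac{d}{d\theta})^n e^{u\theta}=u^n e^{u\theta}$).

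The heart is the case $v=\un$, which also pins down $\rho$. Here I would use the quasi‑shuffle (= quasi‑symmetric function) exponential identity — the non‑commutative shadow of $\sum_n e_n u^n=\exp\sum_{k\ge1}\tfrac{(-1)^{k-1}}{k}p_k u^k$ under $p_k\leftrightarrow y_k$ and $e_n\leftrightarrow y_1\cdots y_1$ — which gives, in the degree‑completed concatenation algebra,
\begin{equation*}
  \exp_{\sqshu}(uy_1)=\Big(1-\sum_{a\ge1}\tfrac{u^a}{a!}\,y_a\Big)^{-1},
  \qquad
  (1-uy_1)^{-1}=\exp_{\sqshu}\!\Big(\sum_{k\ge1}\tfrac{(-1)^{k-1}}{k}\,u^k\,y_k\Big).
\end{equation*}
Applying the quasi‑shuffle homomorphism $\zeta_{\sqshu}$ to the second identity and using $\zeta_{\sqshu}(y_1)=\theta$, $\zeta_{\sqshu}(y_k)=\zeta(k)$ $(k\ge2)$ yields $\zeta_{\sqshu}\big((1-uy_1)^{-1}\big)=e^{u\theta}A(u)^{-1}$; on the other side $\mathfrak s$ carries $(1-uy_1)^{-1}$ to $(1-ux_1)^{-1}=\exp_{\sshu}(ux_1)$, whence the shuffle homomorphism gives $\zeta_{\sshu}\big(\mathfrak s((1-uy_1)^{-1})\big)=e^{u\theta}$. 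Since $\rho$ acts on $e^{u\theta}$ as multiplication by $A(u)$, we get $\rho(e^{u\theta}A(u)^{-1})=e^{u\theta}$, so the two sides agree on every power of $y_1$; comparing coefficients of $u^m$ also shows that $\rho$ is \emph{forced} to be unique and of the displayed form $A(\tfrac{d}{d\theta})$.

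It remains to pass from $v=\un$ to arbitrary $v\in\mathfrak H^0$, and this is where I expect the real difficulty to lie. The obstruction is structural: $\rho$ is an \emph{infinite‑order differential operator, not an algebra morphism}, so one cannot simply combine multiplicativity of $\zeta_{\sqshu}$ with the case $v=\un$; one must understand how $\mathfrak s$ — which does \emph{not} intertwine $\shu$ and $\qshu$ — turns the quasi‑shuffle $v\qshu\exp_{\sqshu}(uy_1)$ into a combination of genuine shuffles, the ``overlap'' terms of the quasi‑shuffle producing concatenations in the $X$‑alphabet that, after $\zeta_{\sshu}$, reassemble into exactly the $\Gamma$‑factor $A(u)$. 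Concretely I would handle this by lifting the whole argument one level up, before specialising to real numbers: work with the $\mathfrak H^0$‑valued regularisation isomorphisms furnished by the structure theorems and establish a universal comparison of the shape $\mathrm{reg}_{\sshu}=\boldsymbol\rho\circ\mathrm{reg}_{\sqshu}$, with $\boldsymbol\rho$ built from the ``formal zeta values'' $z_n:=x_0^{n-1}x_1\in\mathfrak H^0$ in place of $\zeta(n)$, proved by feeding $(1-uy_1)^{-1}$ into the two regularisation \emph{homomorphisms} just as above — now a legitimate manipulation because the $\mathfrak H^0$‑coefficients are inert; evaluating the coefficients via $\zeta$ (multiplicative, $z_n\mapsto\zeta(n)$) then descends $\boldsymbol\rho$ to $\rho$ and yields the theorem. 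Making the bookkeeping of the $z_n$ precise — in particular the fact that they are multiplied by two different products on the two sides — is the step I expect to be the most delicate.
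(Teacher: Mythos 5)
First, a point of reference: the paper does not prove Theorem \ref{bmz} at all --- it is quoted as the Boutet de Monvel--Zagier theorem, with the proof deferred to \cite{CartierMZV,IKZ,Racinet}. So your proposal can only be measured against the standard proofs in those references, and against its own internal completeness.

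What you do have is correct and is indeed the computation that \emph{determines} $\rho$: the structure theorems identifying $(\mathbb Q\langle X\rangle x_1\oplus\mathbb Q\un,\shu)$ and $(\mathbb Q\langle Y\rangle,\qshu)$ as polynomial algebras over their convergent subalgebras in $x_1$, resp.\ $y_1$; the reduction of the theorem to $\zeta_{\sshu}\big({\frak s}(v\qshu y_1^{\qshu j})\big)=\zeta(v)\,\rho(\theta^j)$; and the quasi-shuffle exponential (Newton) identity giving $\zeta_{\sqshu}\big((1-uy_1)^{-1}\big)=e^{u\theta}A(u)^{-1}$ while $\zeta_{\sshu}\big({\frak s}((1-uy_1)^{-1})\big)=e^{u\theta}$, with $A(u)=e^{\gamma u}\Gamma(1+u)$. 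This settles the case $v=\un$ and forces the displayed formula for $\rho$.

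The genuine gap is exactly where you place it, and your proposed repair does not close it. Knowing both regularized maps on the subalgebra generated by the single divergent letter (equivalently, feeding $(1-uy_1)^{-1}$ into the two homomorphisms, whether numerically or ``universally'' with formal letters $z_n=x_0^{n-1}x_1$ in place of $\zeta(n)$) gives no information about the mixed terms: the whole content of the theorem for general $v\in Y^*_{\smop{conv}}$ is the evaluation of $\zeta_{\sshu}\big({\frak s}(v\qshu y_1^{\qshu j})\big)$, i.e.\ the interaction between the convergent coefficient and the divergent powers after the monoid map ${\frak s}$, which intertwines neither product. The known proofs supply precisely the missing ingredient that your sketch lacks: in \cite{IKZ} (and \cite{CartierMZV,Racinet}) this is a concatenation lemma computing, for a fixed convergent word $w_0$, the generating series $\sum_m u^m\,y_1^mw_0$ (concatenation powers, not $\qshu$- or $\shu$-powers) as an explicit product of an exponential in the divergent letter with a ``twisted'' convergent series, on each side separately; comparing the two closed forms is what makes the $\Gamma$-factor $A(u)$ appear uniformly in $w_0$. (Zagier's original argument instead compares the asymptotics of truncated sums in $\log N+\gamma$ with truncated iterated integrals in $\log\frac1\varepsilon$, the $\Gamma$-function entering through that asymptotic matching.) Your ``universal comparison $\mathrm{reg}_{\sshu}=\boldsymbol\rho\circ\mathrm{reg}_{\sqshu}$'' is, moreover, delicate to even state correctly, since the formal coefficients are multiplied by different products on the two sides --- as you note yourself --- and in any case it would still have to be \emph{proved} by some version of the concatenation lemma; testing it on $(1-uy_1)^{-1}$ alone only re-proves the $v=\un$ case. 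So the proposal is a correct outline of the easy half of the argument, with the decisive step acknowledged but not supplied.
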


\noindent
In particular, $\rho(1)=1$, $\rho(\theta)=\theta$, and more generally $\rho(L)-L$ is a polynomial of degree $\le d-2$ if $L$ is of degree $d$, hence $\rho$ is invertible. A proof of Theorem \ref{bmz} can be read in numerous references, e.g. \cite{CartierMZV,IKZ,Racinet}. Any word $u \in Y^*_{\smop{conv}}$ gives rise to Hoffman's regularization relation:
\begin{equation}
\label{reg}
	\zeta_{\sshu}\big(x_1\shu {\frak s}(u)-{\frak s}(y_1\qshu u)\big)=0,
\end{equation}
which is a direct consequence of Theorem \ref{bmz}. The linear combination of words involved above is convergent, hence \eqref{reg} is a relation between convergent MZVs, although divergent ones have been used to establish it. The simplest regularization relation $\zeta(2,1)=\zeta(3)$ is obtained by applying  \eqref{reg} to the word $u=y_2$.


\section{The Jackson Integral}
\label{sect:Jackson}

The results in \cite{CEM} are essentially based on replacing the classical indefinite Riemann integral $R(f)(t):=\int_{0}^{t}f(y)\:dy$ in \eqref{ChenRepIntro} by its $q$-analog, known as Jackson's integral:
\begin{equation}
\label{JI}
    	J[f](t) := (1-q)\:\sum_{n \ge 0} f(q^nt) q^nt.  	
\end{equation}
G.C.~Rota \cite{Rota1,Rota2}  provided an elementary algebraic description of the map $J$ in terms of the $q$-dilation operator:
\begin{equation*}
    	E_q[f](t):=f(qt),         
\end{equation*}
together with  the multiplication operator $M_{f}[g](t):=(fg)(t)=f(t)g(t)$, such that:         
\begin{eqnarray}
\label{qInt}	
     J[f](t) = (1-q)\sum_{n \ge 0} f(q^nt) q^nt             	
             	  &=& (1-q)\sum_{n \ge 0} E^n_q\big[M_{\id}[f]\big](t) \\  
             	  &=:& (1-q)P_qM_{\id}[f](t).                   \nonumber	
\end{eqnarray}
One can recover the ordinary Riemann integral as the limit of the Jackson integral for $q\nearrow 1$. Note however that the Jackson integral makes sense purely algebraically if $q$ is considered as an indeterminate: more precisely, let $\Cal A=t{\mathbb Q}[[t,q]]$ be the space of formal series in two variables with strictly positive valuation in $t$. We can see $\Cal A$ as the space of series in $t$ without constant term and with coefficients in ${\mathbb Q}[[q]]$. Then \eqref{JI} defines the Jackson integral as a ${\mathbb Q}[[q]]$-linear endomorphism of $\Cal A$. 

The ${\mathbb Q}[[q]]$-linear map $P_q:\Cal A\to\Cal A$ defined by:
\begin{equation*}
    		P_q[f] (t):= \sum_{n \ge 0}E_q^{n}[f] = f(t) + f(qt) + f(q^2t) + f(q^3t) + \cdots       
\end{equation*}
satisfies the Rota--Baxter identity of weight $-1$:
\begin{equation}
\label{qRBR}
	P_q[f]P_q[g] = P_q\big[P_q[f]g\big] + P_q\big[fP_q[g]\big]  - P_q[fg \big]. 
\end{equation}
We refer the reader to \cite{CEM} and \cite{EFP,Rota1,Rota2} for more details regarding Rota--Baxter algebras and related topics. Moreover, to avoid confusion, we ask the reader to note that in \cite{CEM} we denoted the map $P_q$ by $\tilde{P}_q$. The former notation, however, is more appropriate for what follows.\\

From this it follows that Jackson's integral (\ref{qInt}) satisfies the relation:
\begin{equation*}
    	J[f] J[g] = J \big[J [f] \: g + f \: J[g] - (1-q)\id f g\big].
\end{equation*}
Or equivalently: 
$$
	J[f]J[g] =J\big[ fJ[g] \big]+  qJ\big[ J \big[E_q[f] \big]g \big],
$$
which is commonly considered to be the $q$-analog for the classical integration by parts rule: 
\begin{equation*}
	R(f)R(g)=R\big(fR(g)+ R(f)g\big), 
\end{equation*}
which can be seen as dual to Leibniz' rule for usual derivations. 

The inverse of the map $P_q$ is defined in terms of the finite $q$-difference operator:
\begin{equation*}
	D_q := I - E_q,
\end{equation*}
where $I$ is the identity map, $I[f]:=f$. Indeed, one shows quickly that $P_qD_q[f] = f=D_qP_q[f]$. It is easy to see that $D_q$ satisfies the generalized Leibniz rule for finite differences:
\begin{equation}
\label{qLeibniz}
	D_q[fg] = D_q[f]g + fD_q[g] - D_q[f]D_q[g].
\end{equation}
This makes $\Cal A$ a \textsl{weight $-1$ differential Rota--Baxter algebra} \cite{GK}, with the crucial additional property that $P_q$ and $D_q$ are mutually inverse\footnote{In a differential Rota--Baxter algebra with differential $D$ and Rota--Baxter operator $P$, equality $D\circ P=\mop{Id}$ holds but $P\circ D=\mop{Id}$ does not hold in general.}. Interestingly enough, \eqref{qLeibniz} can be reordered:
\begin{equation}
\label{qLeibniz-bis}
	D_q[f]D_q[g]= D_q[f]g +fD_q[g]-D_q[fg],
\end{equation}
thus showing a striking similarity with (\ref{qRBR}). This will play a crucial role in the sequel of this paper. \\

For positive $a,b \in \mathbb{N}$ the generalized Leibniz rule (\ref{qLeibniz-bis}) leads to the recursion:
$$
	D_q^a[f]D_q^b[g] = D_q^{a-1}[f]D_q^b[g] + D_q^a[f]D_q^{b-1}[g] - D_q\big[D_q^{a-1}[f]D_q^{b-1}[g]\big].
$$
From this we deduce the following theorem:

\begin{thm}
\label{thm:EulerDD}
Let $1<a\leq b \in \mathbb{N}$.  
\begin{eqnarray} 
\label{DD-Euler}
	D_q^a[f]D_q^b[g] 
	&=& \sum_{j=0}^{a-1} \sum_{i=1}^{b-j} (-1)^{j}
					{a+b-1-i-j \choose j, a-1-j,b-i-j}D_q^j\big[fD^i_q[g]\big]		\\
	& & \hspace{0.3cm} + \sum_{j=1}^{a} \sum_{i=1}^{b-j} (-1)^{j}
					{a+b-1-i-j \choose j-1, a-j,b-i-j}D_q^j\big[fD^i_q[g]\big]		 	\nonumber \\
	& & \hspace{0.5cm} + \sum_{j=0}^{b-1} \sum_{i=1}^{a-j} (-1)^{j}
					{a+b-1-i-j \choose j, b-1-j,a-i-j}D_q^j\big[D^i_q[f]g\big]			\nonumber \\			
	& & \hspace{0.7cm} +  \sum_{j=1}^{b} \sum_{i=1}^{a-j} (-1)^{j}
					{a+b-1-i-j \choose j-1, b-j,a-i-j}D_q^j\big[D^i_q[f]g\big]			\nonumber \\   
	& & \hspace{1.2cm}+  \sum_{j=1}^{a}(-1)^{j} {a+b-1-j \choose j-1,a-j,b-j} 
					D_q^j[fg].						\nonumber
\end{eqnarray} 	
\end{thm}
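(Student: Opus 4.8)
The plan is to argue by induction on $a+b$, using the three-term recursion displayed just before the statement; recall it is obtained from (\ref{qLeibniz-bis}) by substituting $D_q^{a-1}[f]$ and $D_q^{b-1}[g]$ for $f$ and $g$. It is convenient to prove (\ref{DD-Euler}) for all integers $a,b\ge 1$: the hypothesis $1<a\le b$ is immaterial for the validity of the identity, and the right-hand side is symmetric under the exchange $(a,f)\leftrightarrow(b,g)$ (the first sum is swapped with the third, the second with the fourth, the fifth with itself, using commutativity and the obvious symmetry of multinomials), as it has to be since the algebra is commutative. Throughout one uses the convention $D_q^0=I$, so that a term such as $fD_q^i[g]$ is read as $D_q^0\big[fD_q^i[g]\big]$.

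For the base of the induction with $a=b=1$, the right-hand side of (\ref{DD-Euler}) reduces to $fD_q[g]+D_q[f]g-D_q[fg]$ (the second and fourth sums are empty, and the only multinomial that survives is $\binom{0}{0,0,0}=1$), which is exactly (\ref{qLeibniz-bis}). More generally, the case $\min(a,b)=1$, say $a=1$, is settled by an auxiliary induction on $b$: specialising the recursion to $a=1$ gives $D_q[f]D_q^b[g]=fD_q^b[g]+D_q[f]D_q^{b-1}[g]-D_q\big[fD_q^{b-1}[g]\big]$, and one checks that the right-hand side of (\ref{DD-Euler}) at $a=1$ --- where the fourth sum is empty and every multinomial occurring in the other four sums equals $1$ --- obeys precisely this recursion in $b$, with base $b=1$ again given by (\ref{qLeibniz-bis}).

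For the inductive step with $a,b\ge 2$, I would apply the recursion and expand each of the three resulting products $D_q^{a-1}[f]D_q^b[g]$, $D_q^a[f]D_q^{b-1}[g]$ and $D_q^{a-1}[f]D_q^{b-1}[g]$ by the inductive hypothesis. In the last one the outer $D_q$ turns every $D_q^j[w]$ into $D_q^{j+1}[w]$ and the leading minus sign flips every $(-1)^j$ into $(-1)^{j+1}$. Collecting terms, the coefficient of a fixed $D_q^j\big[fD_q^i[g]\big]$ in the result is a sum of at most three multinomials coming from the three expanded products, and the claimed value $(-1)^j\big(\binom{a+b-1-i-j}{j,a-1-j,b-i-j}+\binom{a+b-1-i-j}{j-1,a-j,b-i-j}\big)$ is recovered by expanding each of its two multinomials through the elementary trinomial recurrence $\binom{n-1}{p-1,q,r}+\binom{n-1}{p,q-1,r}+\binom{n-1}{p,q,r-1}=\binom{n}{p,q,r}$ and matching the six resulting terms against the six contributions; the coefficients of $D_q^j\big[D_q^i[f]g\big]$ (by symmetry) and of $D_q^j[fg]$ are handled the same way.

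The part I expect to be delicate is the purely combinatorial bookkeeping of this last step: for each $(i,j)$ one must track which of the five sums each contribution lands in after the index shifts produced by the recursion --- in particular the shift $j\mapsto j+1$ coming from the outer $D_q$ moves a term out of the range of the first (resp.\ third) sum into that of the second (resp.\ fourth) --- and one must check that the boundary cases $j=0$, $j=a$, $i=b-j$, etc., emerge with the correct, possibly vanishing, coefficients. A cleaner and more conceptual route, which I would actually prefer, is to iterate the recursion without ever regrouping: each maximal iteration sequence is then a lattice path starting at $(a,b)$ with unit steps $(-1,0)$, $(0,-1)$ and $(-1,-1)$, where the diagonal step carries a factor $-1$ and raises the ambient power of $D_q$ by one, the path being stopped as soon as it reaches one of the coordinate axes. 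Since both coordinates are non-increasing along a path, it terminates at a well-defined point of an axis and meets no earlier obstruction, so one may freely enumerate such paths; counting those that end at $(0,i)$, at $(i,0)$ or at $(0,0)$ by the number of steps of each type then produces precisely the five families of trinomial coefficients in (\ref{DD-Euler}).
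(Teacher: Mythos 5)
Your preferred route---iterating the three-term recursion without regrouping and enumerating the resulting lattice paths from $(a,b)$ with steps $(-1,0)$, $(0,-1)$, $(-1,-1)$, where monotonicity guarantees termination on an axis and the five multinomial families arise from splitting according to whether the final step is straight or diagonal---is precisely the D\'iaz--P\'aez ``moving dots'' counting argument that the paper's proof invokes, and the counts you indicate do reproduce all five sums in \eqref{DD-Euler}. So the proposal is correct and takes essentially the same approach as the paper; your preliminary induction on $a+b$ with the trinomial Pascal recurrence is just a regrouped version of the same bookkeeping.
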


\begin{proof}
The proof of (\ref{DD-Euler})  follows the same lines of argument as given in \cite{DP}. See also \cite{CEM}. We briefly recall the basic idea \cite{DP}.  Identity (\ref{qLeibniz-bis}) can be represented pictorially:
$$
	\treeD \quad\ =\ \treeB \quad +\ \treeC \quad - \treeA
$$
where the dots represent the operator $D$. The branching indicates multiplication -- we skipped the decoration of the left and right leave by $f$ and $g$, respectively. Observe that each of the trees on the right hand side has one dot less than the tree on the left hand side. 

We define $\Gamma(a,b,c;f,g):=D^c_q(D^a_q(f)D^b_q(g))$, which would be represented by a tree with $a$ dots on the left branch, $b$ dots on the right brach, and $c$ dots on the upper branch. Using (\ref{qLeibniz-bis}) we would like to eliminate the dots on the lower branches, that is, we would like to write    $\Gamma(a,b,c;f,g)$ as a sum of   the three terms $\Gamma(0,i,c+j;f,g)$,  $\Gamma(i,0,c+j;f,g)$, and $\Gamma(0,0,c+j;f,g)$. 

Hence, starting with $a$ dots on the left branch, $b$ dots on the right branch, and $c$ dots on the upper branch, the authors in \cite{DP} describe the counting of possibilities of reducing the number of dots on either of the lower branches in terms of ``moving" dots successively upwards. Identity (\ref{qLeibniz-bis}) implies essentially  three moves, two of which eliminate a dot on either of the lower branches. The last move merges a dot from each brach into a new dot, which is then lifted upwards. The coefficients in (\ref{DD-Euler}) result from carefully counting the moves needed to get to either of the three terms $\Gamma(0,i,c+j;f,g)$,  $\Gamma(i,0,c+j;f,g)$ and  $\Gamma(0,0,c+j;f,g)$.             
\end{proof}

Note that the above expression can be simplified to the following more handy relation.

\begin{prop}
\label{prop:EulerDD}
Let $1<a\leq b \in \mathbb{N}$.  
\begin{eqnarray} 
\label{DD-Euler-slim}
	D_q^a[f]D_q^b[g] &=& 
	\sum_{j=0}^{a}\ \sum_{i=1}^{b-j}  
		(-1)^j {a+b-1-i-j \choose a-1}{a \choose j} D_q^j\big[fD^i_q[g]\big]	 					\\
	& & \hspace{0.5cm}
	 	+ \sum_{j=0}^{b}\ \sum_{i=1}^{\max(1,a-j)}  
		(-1)^j {a+b-1-i-j \choose b-1}{b \choose j} D_q^j\big[D^i_q[f]g\big]	 				\nonumber\\
	& & \hspace{1cm}
		+ \sum_{j=1}^{a} (-1)^j  {a+b-1-j \choose j-1,a-j,b-j} D_q^j[fg].								\nonumber
\end{eqnarray} 
\end{prop}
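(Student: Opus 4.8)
The plan is to derive \eqref{DD-Euler-slim} directly from \eqref{DD-Euler} by a term-by-term comparison. Both right-hand sides are supported on the same three families of monomials, namely $D_q^j[fD_q^i[g]]$, $D_q^j[D_q^i[f]g]$ and $D_q^j[fg]$, each carrying the sign $(-1)^j$, so it suffices to check that the total scalar coefficient of each monomial is the same in the two formulas. The $D_q^j[fg]$ contributions are literally identical, $(-1)^j{a+b-1-j \choose j-1,a-j,b-j}$, so there is nothing to do for them.

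First I would treat the monomials $D_q^j[fD_q^i[g]]$ with $0\le j\le a$ and $1\le i\le b-j$. In \eqref{DD-Euler} such a monomial receives $(-1)^j{a+b-1-i-j \choose j,\,a-1-j,\,b-i-j}$ from the first sum (which contributes exactly when $0\le j\le a-1$) and $(-1)^j{a+b-1-i-j \choose j-1,\,a-j,\,b-i-j}$ from the second sum (contributing when $1\le j\le a$), the $i$-ranges of the two sums being the same; in \eqref{DD-Euler-slim} it receives $(-1)^j{a+b-1-i-j \choose a-1}{a \choose j}$. Thus the matching of coefficients reduces to the elementary identity
\[
{c \choose j}{N \choose c-1}={N \choose j,\,c-1-j,\,e}+{N \choose j-1,\,c-j,\,e},
\qquad e:=N-c+1\ge 0,\quad 0\le j\le c,
\]
with the standing convention that a multinomial coefficient with a negative lower entry is zero (so for $j=0$ only the first term on the right survives, and for $j=c$ only the second), applied with $c=a$, $N=a+b-1-i-j$ and $e=N-a+1=b-i-j$. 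I would prove this identity by expanding factorials: for $1\le j\le c-1$ its right-hand side equals $\frac{N!}{e!}\bigl(\frac{1}{j!\,(c-1-j)!}+\frac{1}{(j-1)!\,(c-j)!}\bigr)=\frac{c\,N!}{e!\,j!\,(c-j)!}$, while its left-hand side equals $\frac{c!}{j!\,(c-j)!}\cdot\frac{N!}{(c-1)!\,e!}=\frac{c\,N!}{j!\,(c-j)!\,e!}$; the boundary cases $j=0$ and $j=c$ are immediate.

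Next I would repeat the argument for the monomials $D_q^j[D_q^i[f]g]$, now using the displayed identity with $c=b$, $N=a+b-1-i-j$ and $e=N-b+1=a-i-j$: for $0\le j\le a-1$ the range $1\le i\le a-j$ is the common range of the third and fourth sums of \eqref{DD-Euler}, and their contributions $(-1)^j{N \choose j,\,b-1-j,\,a-i-j}$ and $(-1)^j{N \choose j-1,\,b-j,\,a-i-j}$ add up to $(-1)^j{N \choose b-1}{b \choose j}$, the coefficient in \eqref{DD-Euler-slim}. The one point needing a separate remark is the upper limit $\max(1,a-j)$ in the second sum of \eqref{DD-Euler-slim}: for $a\le j\le b$ it prescribes a single extra term $(-1)^j{a+b-2-j \choose b-1}{b \choose j}D_q^j[D_q[f]g]$, whereas \eqref{DD-Euler} has no such term; but $0\le a+b-2-j\le b-2<b-1$ for $a\le j\le b$ (here the hypotheses $2\le a\le b$ are used), so ${a+b-2-j \choose b-1}=0$ and these terms vanish identically. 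With the three families reconciled, \eqref{DD-Euler-slim} follows from Theorem \ref{thm:EulerDD}.

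I do not expect a genuine obstacle: the only real content is the Pascal-type identity above, and the rest is bookkeeping, i.e.\ checking that the five nested sums of \eqref{DD-Euler} regroup precisely into the three sums of \eqref{DD-Euler-slim}. The single delicate spot — and the one I would watch most carefully — is the somewhat artificial upper bound $\max(1,a-j)$ in \eqref{DD-Euler-slim}, which has to be recognized as contributing only vanishing terms once $j\ge a$.
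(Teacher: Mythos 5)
Your proposal is correct and follows essentially the same route as the paper: the paper also obtains \eqref{DD-Euler-slim} from Theorem \ref{thm:EulerDD} by combining the first with the second sums and the third with the fourth sums of \eqref{DD-Euler} via basic binomial identities, which is precisely your Pascal-type multinomial identity. Your explicit verification of that identity and of the vanishing of the $\max(1,a-j)$ boundary terms for $a\le j\le b$ fills in the details the paper leaves to the reader.
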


\begin{proof}
The proof of this follows the same arguments as in \cite{CEM}. By exchanging order of summations in each of the terms in (\ref{DD-Euler}), we can combine the first and second terms, and the third and fourth terms. Then some basic binomial identities are used.   
\end{proof}

However, a natural question to ask is, whether we can resolve products, like for instance $D_q[f]P_q[g]$. And indeed, from (\ref{qLeibniz}) we conclude quickly that:
\begin{equation}\label{eq:pqdq}
	D_q[f]P_q[g] = D_q\big[ f P_q[g] \big] + D_q[f]g - fg.
\end{equation}
Equations \eqref{qRBR}, \eqref{qLeibniz} and \eqref{eq:pqdq} are indeed equivalent. A less obvious exercise is the product:
\begin{eqnarray*}
	D_qD_q[f]P_qP_q[g] &=& D_q\big[ D_q[f]P_qP_q[g]\big] + D_qD_q[f]P_q[g] - D_q[f]P_q[g]\\
					 &=&  D_q\Big[ D_q\big[ f P_qP_q[g] \big] +  D_q[f]P_q[g] - fP_q[g]\big]\Big]\\
					 & & + D_q\big[ D_q[f]P_q[g]\big] + D_qD_q[f]g - 2D_q[f]g - D_q\big[ fP_q[g]\big] +fg\\
					 &=& D_qD_q\big[ f P_qP_q[g] \big] + 2 D_q\big[ D_q\big[ f P_q[g] \big] + D_q[f]g - fg\big] \\
					 & & -2 D_q\big[ fP_q[g]\big] - 2D_q[f]g + D_qD_q[f]g + fg   \\
					 &=& D_qD_q\big[ f P_qP_q[g] \big] + 2 D_qD_q\big[ f P_q[g] \big] + 2 D_q\big[ D_q[f]g\big]\\
					 & & - 2D_q[fg] -2 D_q\big[ fP_q[g]\big] - 2D_q[f]g + D_qD_q[f]g + fg.     	
\end{eqnarray*}

\noindent
The recursion for a general product of this form is given by:
$$
	D_q^a[f]P_q^b[g] = D_q\big[ D_q^{a-1}[f]P_q^b[g]\big] + D_q^{a}[f]P_q^{b-1}[g] - D_q^{a-1}[f]P_q^{b-1}[g]\big].
$$

The closed expression is given in the next proposition.

\begin{prop}
\label{prop:EulerDP}
Let $1<a\leq b \in \mathbb{N}$.  
\begin{eqnarray*} 
\label{DP-Euler-slim}
	D_q^a[f]P_q^b[g] &=& 
	\sum_{j=0}^{a}\ \sum_{i=1}^{b-a+j}  
		(-1)^{a-j} {b-1-i+j \choose a-1}{a \choose j} D_q^j\big[fP^i_q[g]\big]	 					\\
	& & \hspace{0.5cm}
	 	+ \sum_{k=1}^{a}\ \sum_{i=1}^{k}  
		(-1)^{a-k} {b-1-i+k \choose b-1}{b \choose a-k} D_q^{k-i}\big[D^i_q[f]g\big]	 				\nonumber\\
	& & \hspace{1cm}
 	 + \sum_{j=0}^{a-1} (-1)^{a-j}  {b-1+j \choose j,a-1-j,b-a+j} D_q^j[fg].	 						\nonumber
\end{eqnarray*} 	
\end{prop}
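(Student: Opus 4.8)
The plan is to prove Proposition \ref{prop:EulerDP} by induction on $a+b$, using the recursion
$$
	D_q^a[f]P_q^b[g] = D_q\big[ D_q^{a-1}[f]P_q^b[g]\big] + D_q^{a}[f]P_q^{b-1}[g] - D_q^{a-1}[f]P_q^{b-1}[g]
$$
stated just before the proposition, which itself follows by applying \eqref{eq:pqdq} to $D_q^{a-1}[f]$ and $P_q^{b-1}[g]$ (or equivalently by iterating \eqref{qLeibniz}). The base cases are $a=1$ (where \eqref{eq:pqdq} gives the result directly after checking that the three sums collapse correctly: the first sum contributes $\sum_{i=1}^{b}D_q\big[fP_q^i[g]\big]$ up to the binomial prefactors, the middle sum the single term $D_q[f]g$, and the last the term $-fg$), together with the degenerate case $b=0$ where $P_q^0=\mop{Id}$ and the identity reduces to Proposition \ref{prop:EulerDD} with $g$ replaced appropriately; these are routine to verify. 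An alternative, and in my view cleaner, route is to mimic the proof of Proposition \ref{prop:EulerDD}: since $P_q=D_q^{-1}$, one may rewrite $D_q^a[f]P_q^b[g]=D_q^a[f]D_q^{-b}[g]$ and try to push the ``tree'' argument of \cite{DP} through to negative exponents on one branch. But because the combinatorics of ``moving dots'' is not obviously symmetric under sign change, I would keep the induction on $a+b$ as the primary strategy and use the tree picture only as a heuristic guide to guess the shape of the answer.

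The key steps, in order, are as follows. First I would fix the right-hand side of \eqref{DP-Euler-slim} as a candidate expression, call it $\Phi(a,b;f,g)$, and record its three pieces: the ``$fP_q^i[g]$''-part $A(a,b)$, the ``$D_q^i[f]g$''-part $B(a,b)$, and the ``$fg$''-part $C(a,b)$. Second, I would plug $\Phi(a,b)$, $\Phi(a-1,b)$ and $\Phi(a-1,b-1)$ into the recursion and check that
$$
	\Phi(a,b;f,g) = D_q\big[\Phi(a-1,b;f,g)\big] + \Phi(a,b-1;f,g) - \Phi(a-1,b-1;f,g).
$$
This splits into three independent verifications, one for each of the three families of terms $D_q^j[fP_q^i[g]]$, $D_q^{k-i}[D_q^i[f]g]$ (equivalently reindex as $D_q^j[D_q^i[f]g]$), and $D_q^j[fg]$, since applying $D_q$ to $\Phi(a-1,b)$ just shifts the outer exponent $j\mapsto j+1$ in every term and does not mix the three families. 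Third, for each family I would match coefficients of a fixed operator-word $D_q^j[fP_q^i[g]]$ (resp.\ $D_q^j[D_q^i[f]g]$, resp.\ $D_q^j[fg]$) on both sides and reduce the required identity to an elementary binomial identity; the relevant ones are the Vandermonde-type / Pascal-type recursions such as
$$
	{b-1-i+j \choose a-1}{a \choose j} = {b-2-i+j \choose a-1}{a \choose j} + {b-1-i+(j-1) \choose a-2}{a-1 \choose j-1} - {b-2-i+(j-1) \choose a-2}{a-1 \choose j-1}
$$
and its analogues, together with the standard reindexing $\sum_{i=1}^{b-a+j}\leftrightarrow\sum_{i=1}^{b-1-a+j}$ accounting for the shift $b\mapsto b-1$. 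Fourth, I would treat the boundary contributions carefully: the summation range $\sum_{i=1}^{b-a+j}$ in $A$, the cut-off $\sum_{i=1}^{k}$ in $B$, and the truncation $\sum_{j=0}^{a-1}$ in $C$ produce edge terms when $j=0$, $j=a$, or $i$ hits the end of its range, and it is exactly the interplay between the edge of $A$ and the main body of $B$ (via the term $D_q^j[fg]$ created when the innermost $D_q$ in $D_q[fP_q^i[g]]$ at $i=1$ collapses $P_q^1$ against $D_q$, producing $D_q^j[fg]$-type terms, cf.\ \eqref{eq:pqdq}) that glues the three families together.

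The main obstacle I anticipate is precisely this bookkeeping of boundary terms: unlike in Proposition \ref{prop:EulerDD}, where both $f$ and $g$ sit under the same kind of operator $D_q$ and the $\max(1,a-j)$ cut-off already signals that the symmetry is broken, here the asymmetry between $D_q$ and $P_q$ is built into the statement from the start (the ranges $b-a+j$ and the mixed index $k-i$ in the outer exponent of the $B$-term are unusual), so when I shift $b\mapsto b-1$ the $B$-sum's range and the relation between its outer exponent $k-i$ and its summation variables change in a way that does not visibly line up term-by-term with $\Phi(a,b)$. Resolving this will require a judicious reindexing of the $B$-sum — most likely substituting $k=a-m$ or introducing $j=k-i$ as the primary variable so that $B(a,b)=\sum_{j,i}(-1)^{?}\binom{b-1-i+j+\cdots}{b-1}\binom{b}{\cdots}D_q^j[D_q^i[f]g]$ in a form where the recursion $b\mapsto b-1$ acts transparently — and then invoking the Chu–Vandermonde identity in the guise $\sum_{j}\binom{x}{j}\binom{y}{k-j}=\binom{x+y}{k}$ to absorb the $\binom{b}{\cdot}$ factors. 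Once the correct reindexing is found, each of the three coefficient-matching identities should be a one- or two-line binomial manipulation, and the induction closes. I would also, as a sanity check, verify the closed formula against the worked-out example $D_qD_q[f]P_qP_q[g]$ given in the text (i.e.\ $a=b=2$), which already exhibits the terms $D_qD_q[fP_qP_q[g]]$, $2D_qD_q[fP_q[g]]$, $2D_q[D_q[f]g]$, $-2D_q[fg]$, $-2D_q[fP_q[g]]$, $-2D_q[f]g$, $D_qD_q[f]g$, $+fg$, and confirm these are exactly what $\Phi(2,2;f,g)$ produces.
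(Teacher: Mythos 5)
Your route is genuinely different from the paper's. The paper proves Proposition \ref{prop:EulerDP} exactly as it proves Theorem \ref{thm:EulerDD} and Proposition \ref{prop:EulerDD}: it invokes the counting argument of \cite{DP}, i.e.\ the ``moving dots on decorated trees'' enumeration of the reductions generated by iterating \eqref{qLeibniz}/\eqref{eq:pqdq}, and the coefficients are obtained by counting moves directly, not by verifying a guessed closed form against the recursion. Your plan --- take the right-hand side as an ansatz $\Phi(a,b)$, show it satisfies the recursion $D_q^a[f]P_q^b[g]=D_q\big[D_q^{a-1}[f]P_q^b[g]\big]+D_q^a[f]P_q^{b-1}[g]-D_q^{a-1}[f]P_q^{b-1}[g]$ by matching coefficients family by family, and reduce everything to Pascal/Vandermonde identities --- is a legitimate alternative, and arguably more self-contained than the paper's citation-style proof; your intention to check against the worked-out $a=b=2$ computation is the right instinct.

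There is, however, a concrete gap in the induction as you set it up: it does not close inside the hypothesis region $1<a\le b$. When $a=b$ the recursion calls on the pair $(a,b-1)=(a,a-1)$, i.e.\ the case $a>b$, which is covered neither by the proposition nor by your base cases $a=1$ and $b=0$; and your justification of the $b=0$ case is off, since $P_q^0=\mathrm{Id}$ makes $D_q^a[f]P_q^0[g]=D_q^a[f]\,g$ a tautology rather than an instance of Proposition \ref{prop:EulerDD} (which concerns $D_q^a[f]D_q^b[g]$), so what must actually be checked there is that the stated summation ranges degenerate to that tautology. The repair is to strengthen the claim to all $a\ge 1$, $b\ge 0$, with the conventions that empty sums vanish and multinomial coefficients with a negative entry are zero; for instance, with these conventions the formula at $(a,b)=(2,1)$ yields $D_q^2\big[fP_q[g]\big]+D_q\big[D_q[f]g\big]+D_q^2[f]g-D_q[f]g-D_q[fg]$, which agrees with a direct computation via \eqref{eq:pqdq}, so the strengthened statement is plausible and the induction can then be closed. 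But this strengthening, together with the deferred binomial verifications --- in particular the boundary terms where the $i$-range $1\le i\le b-a+j$ empties out or the multinomial truncates --- is exactly where the work lies, and none of it is carried out in your proposal.
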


\begin{proof}
Again this follows the same argument given in the proof of Proposition (\ref{DD-Euler}). See \cite{DP,CEM}. 
\end{proof}

Further below we will see that these identities provide $q$-generalizations of Euler's decomposition formula for the modified $q$-analog (\ref{mod-qMZV-z}) at values $n_1,n_2, \ldots, n_k \in \mathbb{Z}$.


\section{\lowercase{$q$}-analogs of Multiple Zeta Values}
\label{sect:qMZV}

Recall that for positive natural numbers $n_1, \ldots, n_k \in \mathbb{N}$, $n_1>1$, classical multiple zeta values (MZVs) of depth $k$ and weight $w:=n_1+n_2+\cdots+n_k$ are defined as $k$-fold iterated infinite series \cite{Hoffman2,Waldschmidt,Zagier,Zudilin}:
\begin{eqnarray}
	\zeta(n_1,\dots,n_k)&:=&\sum_{m_1  > \dots > m_k > 0}
						\frac{1}{ m_1^{n_1}\cdots m_k^{n_k}} \label{MZVs}\\
				   &=&\idotsint\limits_{0\leq t_w \leq \cdots \leq t_1 \leq 1} 
				   		\frac{dt_1}{\tau_1(t_1)} \cdots \frac{dt_w}{\tau_w(t_w)}, \label{ChenRep}
\end{eqnarray}
where $\tau_i(u) = 1-u$ if $i \in \{h_1,h_2,\ldots,h_k\}$, $h_j:=n_1+n_2+\cdots+n_j$, and $\tau_i(u) = u$ otherwise.


\subsection{Iterated Jackson integrals and $q$-multiple zeta values}
\label{ssect:JacksonMZV}

Following \cite{CEM} we define the functions $x:=1/\id$, $y:=1/(1-\id)$, and $\bar{y}:=\id/(1-\id)$, such that:
\begin{equation*}
	x(t)=\frac{1}{t}, \quad y(t)=\frac{1}{1-t}, \quad \bar{y}(t)=\frac{t}{1-t}.
\end{equation*}
Recall the common notation for $q$-numbers, $[m]_q:=\frac{1-q^m}{1-q}$.\\

Replacing the Riemann integrals in (\ref{ChenRep}) by Jackson integrals (\ref{qInt}), we arrive at a $q$-analog of MZVs, which was considered by Y.~Ohno, J.~Okuda and W.~Zudilin in \cite{OhOkZu}. It is defined for positive natural numbers $n_i \in \mathbb{N}$, $n_1>1$, $w:=n_1+\cdots+n_k$, in terms of iterated Jackson integrals evaluated at $q$:  
$$
	\mathfrak{z}_q(n_1,\ldots,n_k)	
     				:= J\Big[ \rho_1 J\big[ \rho_2 \cdots J[\rho_w] \cdots \big] \Big](q), 
$$			
where $\rho_i(t) = y(t)$ if $i \in \{h_1,h_2,\ldots,h_k\}$, $h_j:=n_1+n_2+\cdots+n_j$, and $\rho_i(t) = x(t)$ otherwise. Writing this out in detail yields:			
\begin{eqnarray}
     \mathfrak{z}_q(n_1,\ldots,n_k)	
     			&=&(1-q)^w\underbrace{P_q \: \big[P_q \: [\cdots P_q}_{n_1}\: [\bar{y}
                                     \cdots
                                     \underbrace{P_q \: [P_q\:[ \cdots P_q}_{n_k}\: [\bar{y}]]]]
                                     \cdots ]\big](q)										\nonumber\\
                       	&=&	(1-q)^w \sum_{m_1  > \dots > m_k > 0}
                             	\frac{q^{m_1}}{(1-q^{m_1})^{n_1}\cdots (1-q^{m_k})^{n_k}}		\nonumber\\
         		&=& \sum_{m_1 > \dots > m_k > 0}
				\frac{q^{m_1 }}{[m_1]_q^{n_1}\cdots [m_k]_q^{n_k}}.  			\label{2qMZVs}
\end{eqnarray}
In the introduction we mentioned the modified $q$MZV:
\begin{equation}
\label{modz}
    	\bar{\mathfrak{z}}_q(n_1,\ldots,n_k)
         :=\sum_{m_1  > \dots > m_k> 0}
			\frac{q^{m_1}}{(1-q^{m_1})^{n_1}\cdots (1-q^{m_k})^{n_k}},	
\end{equation}
for which we have:
\begin{equation}\label{weighting}
(1-q)^w \bar{\mathfrak{z}}_q(n_1,\ldots,n_k) = \mathfrak{z}_q(n_1,\ldots,n_k).
\end{equation}
In the following we will mainly work with these modified $q$MZVs. From (\ref{2qMZVs}) it becomes clear that:
$$
	\bar{\mathfrak{z}}_q(n_1,\ldots,n_k) = 
				P_q^{n_1}\big[\bar{y} \cdots P_q^{n_k}\big[\bar{y}]\cdots \big](q),
$$
where we use the $n$-fold composition:
$$
	P_q^{n} := P_q \circ \cdots \circ P_q.
$$


\subsection{Convergence issues and extension to integer arguments of any sign}
\label{ssect:neg-qMZVs-z}

Observe that:
$$
	\bar{\mathfrak{z}}_q(0)=P_q^{0}[\bar{y}](q)=\sum_{m>0}q^m=\bar{y}(q)=\frac{q}{1-q},
$$
which makes perfect sense as a formal series in $q$, the specialization of which is well-defined for any complex $q$ with $|q|<1$. More generally \eqref{2qMZVs} and \eqref{modz} make sense as an element of $q\mathbb Q[[q]]$, i.e. as a formal series in $q$ without constant term, for any $n_1,\ldots, n_k\in\mathbb Z$, and for a complex number $q$ with $|q|<1$ the series converges. This follows from:
\begin{eqnarray*}
	|\bar{\mathfrak{z}}_q(n_1,\ldots,n_k)|	
				&\le&	\sum_{m_1  > \dots > m_k> 0}
					\frac{|q|^{m_1}}{(1-|q|^{m_1})^{n_1}\cdots (1-|q|^{m_k})^{n_k}}\\
				&\le &	\sum_{m_1  > \dots > m_k> 0}
					\frac{|q|^{m_1}}{(1-|q|)^{\tilde{n}_1}\cdots (1-|q|)^{\tilde{n}_k}}\\
				&\le&	(1-|q|)^{-\tilde{w}}\sum_{m'_1,\ldots ,m'_k>0}|q|^{m'_1+\cdots+m'_k}
\end{eqnarray*}
with $\tilde{n}_i:=\mop{sup}(0,n_i)=\frac{1}{2}(n_i + |n_i|)$ for $i=1,\ldots ,k$, such that $\tilde{w}= \sum_{i=1}^k \tilde{n}_i $, and hence:
\begin{equation*}
	|\bar{\mathfrak{z}}_q(n_1,\ldots,n_k)|\le |q|^k(1-|q|)^{-\tilde{w}-k}.
\end{equation*}
Also, for non-modified $q$MZVs:
\begin{equation}
|{\mathfrak{z}}_q(n_1,\ldots,n_k)|\le |q|^k(1-|q|)^{-\tilde{w}-k}|1-q|^{w}.
\end{equation}
\begin{prop}
For $n_1\ge 2$ and $n_2,\ldots,n_k\ge 1$, in the limit $q \to 1$ when $|q|<1$ and $\mop{Arg}(1-q)\in[-\frac {\pi}{2}+\varepsilon,\,\frac{\pi}{2}-\varepsilon]$ for some $\varepsilon>0$, the $k$-fold iterated sum \eqref{2qMZVs} converges to the corresponding classical MZV of depth $k$ and weight $w$.
\end{prop}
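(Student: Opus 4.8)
The plan is to prove the statement by a dominated‑convergence argument for the $k$‑fold series, using the sector hypothesis to pass to the real parameter $|q|$ and treating the ``deep tail'' $m_1\gtrsim 1/(1-q)$ by a separate elementary estimate. Write $\mathfrak z_q(n_1,\dots,n_k)=\sum_{m_1>\cdots>m_k>0}a_q(\underline m)$ with $a_q(\underline m):=(1-q)^w\,q^{m_1}\prod_{i=1}^k(1-q^{m_i})^{-n_i}$, so that $\zeta(n_1,\dots,n_k)=\sum_{m_1>\cdots>m_k>0}\prod_i m_i^{-n_i}$, the latter series being absolutely convergent exactly because $n_1\ge 2$ and $n_2,\dots,n_k\ge 1$. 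For every fixed tuple one has $a_q(\underline m)\to\prod_i m_i^{-n_i}$ as $q\to 1$, since $q^{m_1}\to 1$ and $(1-q)/(1-q^{m_i})=[m_i]_q^{-1}\to m_i^{-1}$. The whole difficulty is that $|a_q(\underline m)|$ admits no $q$‑uniform summable majorant: for $m_1$ of order $1/(1-q)$ the factor $q^{m_1}$ is no longer near $1$.

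First I would record a lower bound for real $q$‑numbers: for $0<r<1$ and $m\ge 1$, concavity of $\log$ gives $r^m\le e^{-m(1-r)}$, hence $1-r^m\ge 1-e^{-m(1-r)}\ge m(1-r)/(1+m(1-r))$ (from $e^x\ge 1+x$), so $[m]_r\ge m/(1+m(1-r))\ge\tfrac12\min\!\big(m,\tfrac1{1-r}\big)$. Next I would use the sector condition to pass from $q$ to $|q|$: writing $1-q=\varrho e^{i\theta}$ with $\cos\theta\ge\sin\varepsilon$ and $\varrho=|1-q|$ small (legitimate since $q\to 1$), one gets $1-|q|^2=\varrho(2\cos\theta-\varrho)\ge\varrho\sin\varepsilon$, whence $1-|q|\ge c_\varepsilon|1-q|$ with $c_\varepsilon:=\tfrac12\sin\varepsilon$. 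Combining this with $|1-q^{m_i}|\ge 1-|q|^{m_i}=(1-|q|)\,[m_i]_{|q|}$ (legitimate because every $n_i>0$) and $\sum_i n_i=w$ yields $|a_q(\underline m)|\le c_\varepsilon^{-w}\,|q|^{m_1}\prod_i[m_i]_{|q|}^{-n_i}=c_\varepsilon^{-w}\,a_{|q|}(\underline m)$; thus the modulus of each complex summand is at most $c_\varepsilon^{-w}$ times the \emph{real} summand at $|q|\in(0,1)$, and it suffices to control the latter as $|q|\to 1$.

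Then I would split the sum at $N:=1/(1-|q|)$. On $\{m_1\le N\}$ all $m_i\le N$, so the first estimate gives $\min(m_i,N)=m_i$ and hence $|a_q(\underline m)|\le 2^w c_\varepsilon^{-w}\prod_i m_i^{-n_i}$, a fixed summable majorant; dominated convergence then gives $\sum_{m_1\le N}a_q(\underline m)\to\zeta(n_1,\dots,n_k)$. On $\{m_1>N\}$ I would group tuples by the largest index $j$ with $m_j>N$: using $[m_i]_{|q|}\ge\tfrac12\min(m_i,N)$, the bound $|q|^{m_1}\le e^{-m_1/N}$, the count $\#\{(m_2,\dots,m_j):m_1>m_2>\cdots>m_j>N\}\le m_1^{j-1}$, and $\sum_{m_1>N}m_1^{j-1}e^{-m_1/N}\le C_j N^{j}$, the contribution with exactly $m_1,\dots,m_j>N$ is at most $C_j\,N^{\,j-(n_1+\cdots+n_j)}$ times $\sum_{N\ge m_{j+1}>\cdots>m_k>0}\prod_{i>j}m_i^{-n_i}$; since $n_1\ge 2$ forces $n_1+\cdots+n_j\ge j+1$, while the last sum is $O((\log N)^{k-1})$ (each inner sum $\sum_{m\le N}m^{-n}$ being $O(\log N)$ if $n=1$ and $O(1)$ if $n\ge 2$), every group is $O(N^{-1}(\log N)^{k-1})$. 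Summing over $j=1,\dots,k$ and letting $q\to 1$ (so $N\to\infty$) shows $\sum_{m_1>N}a_q(\underline m)\to 0$, which finishes the proof.

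I expect the deep‑tail estimate on $\{m_1>N\}$ to be the main obstacle: it is the only place where the hypotheses $n_1\ge 2$ and $n_2,\dots,n_k\ge 1$ — equivalently, convergence of the limiting MZV — genuinely enter, and the counting must be organized carefully to avoid a majorant diverging like a negative power of $1-q$. By contrast, the reduction to a real parameter and the $q$‑number inequality $[m]_r\ge\tfrac12\min(m,1/(1-r))$ are routine once the constant $c_\varepsilon$ has been pinned down.
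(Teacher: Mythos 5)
Your argument is correct, but it is a genuinely different route from the paper's. The paper disposes of this proposition in one line, citing Abel's limit theorem for power series of convergence radius $1$ (the reference to Ahlfors): the sector condition $\mathrm{Arg}(1-q)\in[-\frac{\pi}{2}+\varepsilon,\frac{\pi}{2}-\varepsilon]$ is precisely the non-tangential (Stolz angle) approach under which Abel's theorem guarantees that a power series tends to the sum of its coefficients. You instead give a self-contained real-variable proof: the comparison $1-|q|\ge\frac{1}{2}\sin\varepsilon\,|1-q|$ to reduce to the radial parameter, the elementary bound $[m]_{|q|}\ge\frac{1}{2}\min\bigl(m,\tfrac{1}{1-|q|}\bigr)$, dominated convergence on the range $m_1\le N:=\tfrac{1}{1-|q|}$, and the grouping of the deep tail $m_1>N$ by the number of indices exceeding $N$, yielding the bound $O\bigl(N^{-1}(\log N)^{k-1}\bigr)$; the estimates there are organized correctly and the inequality $n_1+\cdots+n_j\ge j+1$ (from $n_1\ge 2$, $n_i\ge 1$) is exactly what kills the factor $N^{j}$ coming from the counting and the geometric decay $|q|^{m_1}\le e^{-m_1/N}$. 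What each approach buys: the paper's citation is maximally short, but it presupposes viewing $\mathfrak{z}_q$ as a single power series in $q$ whose coefficient sums are known to converge to $\zeta(n_1,\ldots,n_k)$ — a point left implicit there; your proof avoids that identification entirely, works directly with the iterated sum, makes visible exactly where the convergence hypotheses and the sector hypothesis enter, and even gives a quantitative rate for the tail, at the price of a page of elementary estimates instead of one sentence.
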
 
\begin{proof}
This is an immediate application of Abel's limit theorem for power series of convergence radius $1$ \cite[p. 41-42]{Ahlfors}.
\end{proof}
The $q$-parameter may then be considered a regularization of MZVs for arguments in $\mathbb Z$. Using $D_q=P_q^{-1}$ we immediately see that for $n_i \in \mathbb{Z}$ we have:
\begin{equation}
	\bar{\mathfrak{z}}_q(n_1,\ldots,n_k) = 
				P_q^{n_1}\big[\bar{y} \cdots P_q^{n_k}\big[\bar{y}]\cdots \big](q).
\end{equation}
For example:
$$
	\bar{\mathfrak{z}}_q(0,0)=\sum_{m_1>m_2>0}q^{m_1}=\sum_{m>0}(m-1)q^m=\left(\frac{q}{1-q}\right)^2, 
	\qquad {\rm{and}} \qquad
	\bar{\mathfrak{z}}_q(\underbrace{0,\ldots,0}_{k\smop{ times}})=\left(\frac{q}{1-q}\right)^k.
$$	
For $a < 0$:	
\begin{eqnarray*}
	\bar{\mathfrak{z}}_q(a)= D_q^{|a|}(\bar{y})(q)=\sum_{m>0}q^{m}(1-q^{m})^{|a|},
\end{eqnarray*}
and for $a>0$:
$$
	\bar{\mathfrak{z}}_q(a,0)	=\sum_{m_1>m_2>0}\frac{q^{m_1}}{(1-q^{m_1})^a}
						=\sum_{m>0}\frac{(m-1)q^{m}}{(1-q^{m})^a}.
$$
\noindent Finally, it will be also convenient to express our $q$-multiple zeta values in terms of the parameter $q^{-1}$ whenever possible:
\begin{prop}
The $q$-multiple zeta values ${\mathfrak{z}}_q(n_1,\ldots,n_k)$ and $\overline{{\mathfrak{z}}}_q(n_1,\ldots,n_k)$ make sense as a series in $\mathbb Q[[q^{-1}]]$ for any $(n_1,\ldots,n_k)$ with $n_j\ge 1$ and $n_1\ge 2$.
\end{prop}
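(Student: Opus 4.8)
The plan is to show that each $q$-MZV, which we already know is an element of $q\mathbb{Q}[[q]]$, can be re-expanded as a series in $q^{-1}$ by exploiting the explicit double-sum formula \eqref{modz}. The starting point is the elementary observation that for a single geometric-type factor one has, for $|q|>1$,
\begin{equation*}
	\frac{1}{(1-q^{m})^{n}} = \frac{(-1)^n q^{-mn}}{(1-q^{-m})^{n}}
	= (-1)^n q^{-mn}\sum_{\ell\ge 0}\binom{\ell+n-1}{n-1}q^{-m\ell},
\end{equation*}
valid for $n\ge 1$ (and trivially for $n\le 0$, where the factor is a polynomial in $q^{m}$, hence a polynomial in $q^{-m}$ after pulling out a power of $q^{m}$). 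So the plan is: first rewrite every factor $(1-q^{m_i})^{-n_i}$ in \eqref{modz} in terms of $q^{-m_i}$ as above, and also rewrite the numerator $q^{m_1}$ as a single monomial that will be combined with the first factor. After this substitution the summand becomes, up to signs, a monomial $q^{-(\text{positive integer})\cdot(\text{linear form in the }m_i)}$ times nonnegative integer coefficients.

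The second step is to show the resulting multi-series is a well-defined element of $\mathbb{Q}[[q^{-1}]]$, i.e. that for each fixed power $q^{-N}$ only finitely many tuples $(m_1>\cdots>m_k>0)$ and expansion indices contribute. This is exactly where the hypotheses $n_1\ge 2$ and $n_j\ge 1$ are used: after the rewriting, the total exponent of $q^{-1}$ coming from a tuple $(m_1,\dots,m_k)$ is at least $n_1 m_1 + n_2 m_2 + \cdots + n_k m_k - m_1 \ge (n_1-1)m_1 + m_2 + \cdots + m_k \ge m_1 + \cdots + m_k$ (using $n_1-1\ge 1$ and $n_j\ge 1$), which, since $m_1>\cdots>m_k>0$ forces $m_1\ge k$, grows without bound as the tuple ``escapes to infinity''; hence the coefficient of each $q^{-N}$ is a finite sum. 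One should make the bookkeeping precise by noting that the exponent $n_1m_1+\cdots+n_km_k-m_1$, together with the nonnegative expansion indices $\ell_1,\dots,\ell_k$, determines $N$, and for fixed $N$ there are only finitely many admissible $(m_i)$ and $(\ell_i)$. Convergence for $|q|>1$ then follows from the same majorization argument as in Subsection~\ref{ssect:neg-qMZVs-z}, now applied with $q$ replaced by $q^{-1}$, exactly as in the proof of the earlier convergence bound.

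The same argument applies verbatim to ${\mathfrak{z}}_q(n_1,\dots,n_k) = (1-q)^w\bar{\mathfrak{z}}_q(n_1,\dots,n_k)$ by \eqref{weighting}, since $(1-q)^w = (-1)^w q^{w}(1-q^{-1})^{w}$ is itself (after pulling out the monomial $q^w$) a polynomial in $q^{-1}$, and multiplying a series in $\mathbb{Q}[[q^{-1}]]$ by a Laurent monomial times a polynomial in $q^{-1}$ keeps us in $\mathbb{Q}[[q^{-1}]]$ up to an overall integer power of $q$; one checks that the overall power of $q$ is in fact nonpositive. The main obstacle, and the only genuinely non-routine point, is the finiteness claim in the second step: one must organize the quadruple summation (over tuples $m_i$ and over expansion exponents $\ell_i$) so that the map to the total $q^{-1}$-degree $N$ has finite fibers, and here the asymmetry of the condition $n_1\ge 2$ versus $n_j\ge 1$ is essential — without $n_1\ge 2$ the numerator $q^{m_1}$ would exactly cancel the growth in $m_1$ and the coefficient of $q^{0}$ would be an infinite sum (indeed $\bar{\mathfrak{z}}_q(1,1,\dots)$ type expressions and $\bar{\mathfrak{z}}_q(0,\dots,0)=(q/(1-q))^k$ are manifestly not in $\mathbb{Q}[[q^{-1}]]$ without extra powers of $q$). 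Once the fibers are seen to be finite the rest is the standard geometric majorization already used twice in the paper.
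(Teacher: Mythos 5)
Your proposal is correct and follows essentially the same route as the paper: the paper's proof is the one-line rewriting based on $[m]_q=q^{m-1}[m]_{q^{-1}}$, which turns each summand into a monomial in $q^{-1}$ times inverses of polynomials in $q^{-1}$ with constant term $1$, the hypotheses $n_1\ge 2$, $n_j\ge 1$ guaranteeing (exactly as in your finite-fiber bookkeeping, which the paper leaves implicit as a ``straightforward computation'') that each power of $q^{-1}$ receives only finitely many contributions. One tiny caveat: your parenthetical claim that the overall power of $q$ is nonpositive for $\mathfrak{z}_q$ fails in depth one, where the $m_1=1$ term of $\mathfrak{z}_q(n_1)$ is the lone monomial $q$ --- an imprecision already present in the paper's own displayed formula, so it does not separate your argument from theirs.
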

  \begin{proof}
  This comes from the straightforward computation:
  \begin{equation}
  {\mathfrak{z}}_q(n_1,\ldots,n_k)=\sum_{m_1>\cdots>m_k>0}\frac{(q^{-1})^{-m_1}(q^{-1})^{(m_1-1)n_1+\cdots +(m_k-1)n_k}}{[m_1]_{q^{-1}}^{n_1}\cdots[m_k]_{q^{-1}}^{n_k}}.
  \end{equation}
  \end{proof}

\subsection{The $q$-shuffle structure}
\label{ssect:q-shuffle}

Let $W$ be the set of words on the alphabet $\wt X:=\{d,y,p\}$ ending with $y$, subject to $dp=pd=\un$ (where $\un$ stands for the empty word). We shall also use the notation $p^{-1}=d$. Any non-empty word $v$ in $W$ writes in a unique way:
\begin{equation}
	v=p^{n_1}y\cdots p^{n_k}y,
\end{equation}
with $k> 0$ and $n_1,\ldots, n_k\in\mathbb Z$. The length of the word $v$ is given by:
\begin{equation}
	\ell(v)=k+|n_1|+\cdots +|n_k|.
\end{equation}
For later use we introduce the notations:
\begin{eqnarray}
\bar{\mathfrak{z}}_q^{\sshu}(p^{n_1}y\cdots p^{n_k}y)&:=&\bar{\mathfrak{z}}_q(n_1,\ldots,n_k),\\
{\mathfrak{z}}_q^{\sshu}(p^{n_1}y\cdots p^{n_k}y)&:=&{\mathfrak{z}}_q(n_1,\ldots,n_k).
\end{eqnarray}
The \textsl{$q$-shuffle product} is given on $\mathbb Q.W$ recursively (with respect to the length of words) by $\un\shu v=v\shu\un=v$ for any word $v$, and:
\begin{eqnarray}
	(yv)\shu u=v\shu (yu)&=&y(v\shu u),						\label{shuffle-y}\\
			pv\shu pu&=&p(v\shu pu)+p(pv\shu u) - p(v\shu u),	\label{shuffle-p}\\
			dv\shu du&=&v\shu du + dv\shu u - d(v\shu u),		\label{shuffle-d}\\
	dv\shu pu=pu\shu dv&=&d(v\shu pu)+dv\shu u - v\shu u			\label{shuffle-dp}
\end{eqnarray}
for any words $v$ and $u$ in $W$. Equations come from an abstraction of Equations \eqref{qRBR}, \eqref{qLeibniz} and \eqref{eq:pqdq} respectively.

\begin{thm}\label{q-shuffle-structure}
The $q$-shuffle product is commutative and associative. Moreover for any $v,u \in W$ we have:
\begin{equation*}
	\bar{\mathfrak z}_q^{\sshu}(v)\bar{\mathfrak z}_q^{\sshu}(u)
	=\bar{\mathfrak z}_q^{\sshu}(v\shu u).
\end{equation*}
\end{thm}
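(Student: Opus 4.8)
The plan is to prove the three assertions in the order: commutativity, the algebra-morphism property, and finally associativity, since the morphism property will in fact give associativity almost for free once enough products are realized analytically.

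First I would establish commutativity of $\shu$ by induction on the total length $\ell(v)+\ell(u)$. One simply inspects the four defining recursions \eqref{shuffle-y}--\eqref{shuffle-dp}: the first is manifestly symmetric in the roles of the two words up to the stated identification $v\shu(yu)=(yv)\shu u$, the second and third are symmetric under $v\leftrightarrow u$, and the fourth is symmetric by the explicit equality $dv\shu pu=pu\shu dv$ built into the definition. The only point requiring care is that every word in $W$ is nonempty or empty, and a nonempty word begins with one of $y$, $p$, or $d$; so for two nonempty words one of the four rules always applies, and when a word begins with $y$ one must check the $y$-rule is consistent with whichever rule the other word's leading letter would trigger — but \eqref{shuffle-y} is stated so as to pull out a leading $y$ regardless of the other factor, so there is no ambiguity. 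The base case is $\un\shu v=v$.

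Next, the heart of the matter: showing $\bar{\mathfrak z}_q^{\sshu}$ is multiplicative. The idea is that $\bar{\mathfrak z}_q^{\sshu}(p^{n_1}y\cdots p^{n_k}y)$ is, by the formula of Section~\ref{ssect:JacksonMZV} extended to integer exponents, the evaluation at $q$ of the nested expression $P_q^{n_1}[\bar y P_q^{n_2}[\bar y\cdots P_q^{n_k}[\bar y]\cdots]]$, where $P_q^{-1}=D_q$. I would define, for an arbitrary word $v=p^{n_1}y\cdots p^{n_k}y\in W$, the function $F_v:=P_q^{n_1}[\bar y P_q^{n_2}[\cdots]]\in\Cal A$ (so $\bar{\mathfrak z}_q^{\sshu}(v)=F_v(q)$), and prove by induction on $\ell(v)+\ell(u)$ the stronger functional identity
\begin{equation*}
	F_v\cdot F_u = F_{v\shu u}
\end{equation*}
in $\Cal A$, where $F$ is extended $\QQ[[q]]$-linearly to $\QQ.W$. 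Each of the four recursions \eqref{shuffle-y}--\eqref{shuffle-dp} corresponds to exactly one algebraic identity already established in Section~\ref{sect:Jackson}: the rule \eqref{shuffle-y} for pulling out a leading $y$ corresponds to the trivial fact that the outer multiplication by $\bar y$ distributes, i.e. $\bar y G\cdot H = \bar y(G H)$ is false in general but the relevant statement is that when the \emph{leading operator} is the identity (letter $y$ with no $p$ or $d$ in front) the nested function is literally $\bar y\cdot(\text{rest})$, so $F_{yv}=M_{\bar y}F_{v'}$ where $v=yv'$... — here I must be slightly careful about how a leading $y$ with no preceding $p$ is encoded, since every word ends in $y$ and $n_1$ may be zero; the cleanest route is to treat $F$ on the level of "a function times a nested tail" and observe \eqref{shuffle-y} matches $M_{\bar y}(G)\cdot H = M_{\bar y}(G\cdot H)$ only when one factor has trivial outer operator, which is exactly the hypothesis of \eqref{shuffle-y}. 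The rule \eqref{shuffle-p} is precisely the weight $-1$ Rota--Baxter identity \eqref{qRBR} applied to the outermost $P_q$ of each factor; the rule \eqref{shuffle-d} is the reordered finite-difference Leibniz rule \eqref{qLeibniz-bis} applied to the outermost $D_q$; and the mixed rule \eqref{shuffle-dp} is exactly \eqref{eq:pqdq}. So the inductive step is: if the two words have leading letters triggering a given rule, apply the corresponding operator identity to $F_v\cdot F_u$, recognize each resulting summand as $F_w$ for the appropriate shorter word $w$ by the induction hypothesis, and collect. Evaluating at $t=q$ at the end yields the theorem.

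Finally, associativity. Since $F:\QQ.W\to\Cal A$ intertwines $\shu$ with the commutative associative multiplication of $\Cal A$, associativity of $\shu$ would follow immediately \emph{if} $F$ were injective. I would argue injectivity (or rather, enough of it) by a triangularity/leading-term argument: order words by length, and check that $F_v$ determines $v$ — e.g. by noting that the map $v\mapsto\bar{\mathfrak z}_q(n_1,\dots,n_k)$ together with the grading by length is faithful enough, or more robustly, prove associativity directly by the same triple induction on $\ell(u)+\ell(v)+\ell(w)$, reducing $(u\shu v)\shu w$ and $u\shu(v\shu w)$ to a common normal form using the four recursions; here the Rota--Baxter/differential identities guarantee the needed compatibilities because $\Cal A$ is genuinely associative. \textbf{The main obstacle} I anticipate is precisely this last point: a clean direct induction for associativity has many cases (which of $u,v,w$ begins with $y$, with $p$, with $d$, and the mixed $p/d$ interactions), and one must either push the analytic realization $F$ hard enough to deduce associativity from associativity in $\Cal A$ — which needs a faithfulness statement — or organize the combinatorial induction carefully so that the $dp=pd=\un$ relation and the four rules do not produce inconsistent reductions. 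The analytic route is cleanest provided one can show $F$ is injective on $\QQ.W$, and I would prioritize proving that injectivity (via length-grading and the explicit low-length values computed in Section~\ref{ssect:neg-qMZVs-z}) rather than grinding through the combinatorial case analysis.
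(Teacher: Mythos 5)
Your treatment of commutativity and of the morphism property is sound and essentially matches the paper: the paper also realizes words analytically, via the map $\Cal Z:\QQ.W\to\Cal A$, $p^{n_1}y\cdots p^{n_k}y\mapsto P_q^{n_1}\big[\bar y P_q^{n_2}[\cdots P_q^{n_k}[\bar y]\cdots]\big]$, observing that \eqref{shuffle-y}, \eqref{shuffle-p}, \eqref{shuffle-d}, \eqref{shuffle-dp} are abstractions of \eqref{qRBR}, \eqref{qLeibniz-bis} and \eqref{eq:pqdq}, and then evaluating at $t=q$; your induction on $\ell(v)+\ell(u)$ is an acceptable substitute for the paper's appeal to $(\QQ.W,\shu)$ being the free invertible weight $-1$ differential Rota--Baxter algebra on one generator.

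The genuine gap is associativity, which is the actual content of the paper's proof (a five-case induction on $\ell(u)+\ell(v)+\ell(z)$, organized by how many of the leading letters are $y$, $p$ or $d$, with the mixed $d$/$p$ cases handled via \eqref{shuffle-dp}). You propose instead to deduce associativity from associativity in $\Cal A$, which requires injectivity of the realization map, and you do not prove it --- you only gesture at a ``triangularity/leading-term'' argument and at ``the explicit low-length values computed in Section \ref{ssect:neg-qMZVs-z}.'' The latter cannot work even in principle: those values are evaluations at $t=q$, and the evaluated map $v\mapsto\bar{\mathfrak z}_q^{\sshu}(v)$ has a large kernel on $\QQ.W$ --- indeed the double $q$-shuffle relations, e.g.\ \eqref{qEuler} in the form $\bar{\mathfrak z}_q(3)-\bar{\mathfrak z}_q(2)-\bar{\mathfrak z}_q(2,1)=0$, are precisely nontrivial kernel elements, so faithfulness must be argued for the two-variable map $\Cal Z$ into $\Cal A$ before evaluation, which is a nontrivial linear-independence statement about the functions $P_q^{n_1}[\bar y\cdots P_q^{n_k}[\bar y]]$ that you leave entirely open (and note that associativity of $\shu$ must also hold on all of $\QQ.W$, not merely modulo the kernel). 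Your fallback, the direct combinatorial induction, is the route the paper takes, but you explicitly defer it; so as it stands neither route is carried out and the associativity claim is unproven. To repair the proposal, either supply the injectivity of $\Cal Z$ (e.g.\ by a genuine triangularity argument in the $t$-adic filtration of $\Cal A$), or carry out the case analysis: the case where some leading letter is $y$ is immediate from \eqref{shuffle-y}, the all-$p$ case is the classical quasi-shuffle computation, and the three remaining cases (all $d$; two $d$'s and one $p$; one $d$ and two $p$'s) each reduce, after expanding both bracketings with \eqref{shuffle-d}, \eqref{shuffle-p}, \eqref{shuffle-dp} and the induction hypothesis, to matching seven to thirteen terms pairwise.
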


\begin{proof}
Proving commutativity is done by showing $u\! \shu v = v\! \shu u$ using induction on the sum $\ell(u)+\ell(v)$ of the lengths of the two words $u$ and $v$. It is left to the reader. We now prove the associativity relation $(u\shu v)\shu z = u \shu (v\shu z)$ by induction on the sum $\ell(u)+\ell(v)+\ell(z)$ of the lengths of the three words $u$, $v$ and $z$.\\

If one of the words is empty there is nothing to prove. Otherwise we write $u=\alpha a$, $v=\beta b$, and $z=\gamma c$ where $\alpha$, $\beta$ or $\gamma$ can be the letters $p$, $d$ or $y$. This yields theoretically 27 different cases, which however will reduce substantially:

\begin{itemize}

\item\textbf{First case: one of the letters is a $y$}. Using \eqref{shuffle-y} repeatedly as well as the induction hypothesis we have:
\begin{eqnarray*}
	(ya\shu v)\shu z	&=&\big(y(a\shu v)\big)\shu z\\
				&=&y\big((a\shu v)\shu z\big)\\
				&=&y\big(a\shu(v\shu z)\big)\\
				&=&ya\shu(v\shu z).
\end{eqnarray*}
The similar cases amount to this one by using commutativity.\\

\item\textbf{Second case: $\alpha=\beta=\gamma=d$}. We use \eqref{shuffle-d} repeatedly as well as commutativity, and we freely omit parentheses when using the induction hypothesis. On one hand we have:
\allowdisplaybreaks{
\begin{eqnarray*}
	(da\shu db)\shu dc	&=&\big(a\shu db+da\shu b-d(a\shu b)\big)\shu dc\\
				&=&a\shu db\shu dc+b\shu da\shu dc-d(a\shu b)\shu dc\\
				&=&a\shu\big(b\shu dc+db\shu c-d(b\shu c)\big)
						+b\shu\big(a\shu dc+da\shu c-d(a\shu c)\big)\\
				&& -a\shu b\shu dc-d(a\shu b)\shu c+d(a\shu b\shu c)\\
				&=&\underbrace{a\shu db\shu c}_1-\underbrace{a\shu d(b\shu c)}_2
					+\underbrace{b\shu a\shu dc}_3
						+\underbrace{b\shu da\shu c}_4
							-\underbrace{b\shu d(a\shu c)}_5\\
				&&-\underbrace{d(a\shu b)\shu c}_6+\underbrace{d(a\shu b\shu c)}_7.
\end{eqnarray*}}
On the other hand,
\allowdisplaybreaks{
\begin{eqnarray*}
	da\shu(db\shu dc)	&=&da\shu\big(b\shu dc+db\shu c-d(b\shu c)\big)\\
				&=&da\shu dc\shu b+da\shu db\shu c-da\shu d(b\shu c)\\
				&=&a\shu dc\shu b+da\shu c\shu b-d(a\shu c)\shu b
					+a\shu db\shu c+da\shu b\shu c\\
				&& -d(a\shu b)\shu c-a\shu d(b\shu c)-da\shu b\shu c +d(a\shu b\shu c).\\
				&=&\underbrace{a\shu dc\shu b}_3-\underbrace{d(a\shu c)\shu b}_5
					+\underbrace{a\shu db\shu c}_1+\underbrace{da\shu b\shu c}_4
						-\underbrace{d(a\shu b)\shu c}_6\\
				&&-\underbrace{a\shu d(b\shu c)}_2 +\underbrace{d(a\shu b\shu c)}_7.
\end{eqnarray*}}
Hence both expressions coincide.\\

\item\textbf{Third case: one $p$ and two $d$'s}. We can suppose that $\alpha=\beta=d$ and $\gamma=p$: the other cases will follow by commutativity. We use both \eqref{shuffle-d} and \eqref{shuffle-dp}. On one hand we have:
\allowdisplaybreaks{
\begin{eqnarray*}
	(da\shu db)\shu pc	&=&\big(a\shu db+da\shu b-d(a\shu b)\big)\shu pc\\
				&=&a\shu db\shu pc+b\shu da\shu pc-d(a\shu b)\shu pc\\
				&=&a\shu d(b\shu pc)+a\shu db\shu c-a\shu b\shu c
					+b\shu d(a\shu pc)+b\shu da\shu c\\
				&&-b\shu a\shu c-d(a\shu b\shu pc)-d(a\shu b)\shu c+a\shu b\shu c\\
				&=&\underbrace{a\shu d(b\shu pc)}_1+\underbrace{a\shu db\shu c}_2
					-\underbrace{a\shu b\shu c}_3+\underbrace{b\shu d(a\shu pc)}_4
						+\underbrace{b\shu da\shu c}_5\\
				&&-\underbrace{d(a\shu b\shu pc)}_6-\underbrace{d(a\shu b)\shu c}_7.
\end{eqnarray*}}
On the other hand,
\allowdisplaybreaks{
\begin{eqnarray*}
	da\shu(db\shu pc)	&=&da\shu\big(d(b\shu pc)+db\shu c-b\shu c)\\
				&=&da\shu d(b\shu pc)+da\shu db\shu c - da\shu b\shu c\\
				&=&a\shu d(b\shu pc)+da\shu pc\shu b-d(a\shu b\shu pc)
					+a\shu db\shu c\\
				&&+da\shu b\shu c-d(a\shu b)\shu c-da\shu b\shu c\\
				&=&\underbrace{a\shu d(b\shu pc)}_1+\underbrace{d(a\shu pc)\shu b}_4
					+\underbrace{da\shu c\shu b}_5-\underbrace{a\shu c\shu b}_3
						\\
				&&-\underbrace{d(a\shu b\shu pc)}_6+\underbrace{a\shu db\shu c}_2-\underbrace{d(a\shu b)\shu c}_7.
\end{eqnarray*}}

\item\textbf{Fourth case: two $p$'s and one $d$}. Again we can suppose that $\alpha=\beta=p$ and $\gamma=d$: the other cases will follow by commutativity. We use both \eqref{shuffle-p} and \eqref{shuffle-dp}. One one hand we have:
\allowdisplaybreaks{
\begin{eqnarray*}
	(pa\shu pb)\shu dc	&=&p(a\shu pb)\shu dc+p(pa\shu b)\shu dc-p(a\shu b)\shu dc\\
				&=&d\big(c\shu p(a\shu pb)\big)+dc\shu a\shu pb-a\shu pb\shu c
					+d\big(c\shu p(b\shu pa)\big)\\
				&& +dc\shu b\shu pa-b\shu pa\shu c-d\big((c\shu p(a\shu b)\big)-dc\shu a\shu b+c\shu a\shu b.\\
				&=&d\big(c\shu p(a\shu pb)\big)+a\shu d(c\shu pb)+a\shu dc\shu b-a\shu c\shu b
					-a\shu pb\shu c\\
				&& +d\big(c\shu p(b\shu pa)\big)+b\shu d(c\shu pa)+b\shu dc\shu a
					-b\shu c\shu a-b\shu pa\shu c\\
				&& -d\big((c\shu p(a\shu b)\big)-dc\shu a\shu b+c\shu a\shu b.\\
				&=&\underbrace{d\big(c\shu p(a\shu pb)\big)}_1+\underbrace{a\shu d(c\shu pb)}_2
					-\underbrace{a\shu c\shu b}_3-\underbrace{a\shu pb\shu c}_4
						+\underbrace{d\big(c\shu p(b\shu pa)\big)}_5\\
				&& +\underbrace{b\shu d(c\shu pa)}_6+\underbrace{b\shu dc\shu a}_7
					-\underbrace{b\shu pa\shu c}_8-\underbrace{d\big(c\shu p(a\shu b)\big)}_9. 
\end{eqnarray*}}
On the other hand:
\allowdisplaybreaks{
\begin{eqnarray*}
	pa\shu(pb\shu dc)	&=&pa\shu \big(d(c\shu pb)+dc\shu b-c\shu b\big)\\
				&=&d(c\shu pb\shu pa)+d(c\shu pb)\shu a-c\shu pb\shu a\\
				&&+d(c\shu pa)\shu b+dc\shu a \shu b-a\shu c\shu b-pa\shu c\shu b\\
				&=&\underbrace{d\big((c\shu p(b\shu pa)\big)}_5
					+\underbrace{d\big(c\shu p(pb\shu a)\big)}_1
					-\underbrace{d\big(c\shu p(b\shu a)\big)}_9
					+\underbrace{d(c\shu pb)\shu a}_2 \\
				&&-\underbrace{c\shu pb\shu a}_4+\underbrace{d(c\shu pa)\shu b}_6+\underbrace{dc\shu a \shu b}_7
					-\underbrace{a\shu c\shu b}_3-\underbrace{pa\shu c\shu b}_8.
\end{eqnarray*}}

\item\textbf{Fifth case: $\alpha=\beta=\gamma=p$}. This is the ordinary quasi-shuffle case. We detail it for completeness, using \eqref{shuffle-p}. One one hand we have:
\allowdisplaybreaks{
\begin{eqnarray*}
	(pa\shu pb)\shu pc	&=&p(pa\shu b)\shu pc+p(pb\shu a)\shu pc-p(a\shu b)\shu pc\\
				&=&p\Big(pa\shu b\shu pc+p(pa\shu b)\shu v-pa\shu b\shu c
					+pb\shu a\shu pc+p(pb\shu a)\shu c\\
				&&-pb\shu a\shu c-p(a\shu b)\shu c-a\shu b\shu pc+a\shu b\shu c\Big)\\
				&=&p\Big(\underbrace{b\shu p(a\shu pc)}_1+\underbrace{b\shu p(pa \shu c)}_2
					-\underbrace{b\shu p(a\shu c)}_3+\underbrace{p(pa\shu b)\shu c}_4
						-\underbrace{pa\shu b\shu c}_5\\
				&&+\underbrace{a\shu p(b\shu pc)}_6+\underbrace{a\shu p(pb \shu c)}_7
					-\underbrace{a\shu p(b\shu c)}_8+\underbrace{p(pb\shu a)\shu c}_9
						-\underbrace{pb\shu a\shu c}_{10}\\
				&&-\underbrace{p(a\shu b)\shu c}_{11}-\underbrace{a\shu b\shu pc}_{12}
					+\underbrace{a\shu b\shu c}_{13}\Big).
\end{eqnarray*}}
On the other hand:
\allowdisplaybreaks{
\begin{eqnarray*}
	pa\shu(pb\shu pc)	&=&(pb\shu pc)\shu pa\\
				&=& p\Big(\underbrace{c\shu p(b\shu pa)}_4+\underbrace{c\shu p(pb \shu a)}_9
					-\underbrace{c\shu p(b\shu a)}_{11}+\underbrace{p(pb\shu c)\shu a}_7
					-\underbrace{pb\shu c\shu a}_{10}\\
				&&+\underbrace{b\shu p(c\shu pa)}_2+\underbrace{b\shu p(pc \shu a)}_1
					-\underbrace{b\shu p(c\shu a)}_3+\underbrace{p(pc\shu b)\shu a}_6
						-\underbrace{pc\shu b\shu a}_{12}\\
				&&-\underbrace{p(b\shu c)\shu a}_8-\underbrace{b\shu c\shu pa}_5
					+\underbrace{b\shu c\shu a}_{13}\Big).
\end{eqnarray*}}
\end{itemize}
This concludes the proof of the associativity.\\

Now let us call a differential Rota--Baxter algebra \textsl{invertible} if the Rota--Baxter operator $P$ and the differential $D$ are mutually inverse. Then $(\mathbb Q.W,\shu)$ is the free invertible differential Rota--Baxter algebra of weight $-1$ with one generator. Indeed, the generator is $y$, the Rota--Baxter  operator (respectively the differential) is left concatenation by the letter $p$ (respectively $d$), and identities \eqref{shuffle-y}, \eqref{shuffle-d}, \eqref{shuffle-p}, \eqref{shuffle-dp} guarantee the weight $-1$ differential Rota--Baxter identities. This object is very different from (and much smaller than) the free differential Rota--Baxter algebra with one generator constructed in \cite{GK}. The map 
\begin{eqnarray*}
	\Cal Z:\mathbb Q.W		&\longrightarrow& \Cal A\\
	p^{n_1}y\cdots p^{n_k}y 	&\longmapsto & 
					P_q^{n_1}\big[\bar yP_q^{n_2}[\bar y\cdots P_q^{n_k}[\bar y]\cdots]\big]
\end{eqnarray*}
is the unique map of invertible differential Rota--Baxter algebras of weight $-1$ such that $\Cal Z(y)=\bar y$ (recall $\bar y(t,q)=y(t):=\frac t{1-t}$). The second assertion of Theorem \ref{q-shuffle-structure} immediately comes from the fact that for any word $v$:
\begin{equation*}
	\bar {\mathfrak z}_q^{\sshu}(v)=\Cal Z(v)(t,q)\restr{t=q}.
\end{equation*}
\end{proof}

\begin{rmk}
{\rm Considering what happens with ordinary shuffle or quasi-shuffle products, it would be nice to have a purely combinatorial interpretation of the product $\shu$.}
\end{rmk}


\subsection{Euler decomposition formulas}
\label{ssect:qEulerDecomp}

Recall the identities (\ref{DD-Euler-slim}) and (\ref{DP-Euler-slim}) in Propositions \ref{prop:EulerDD} and \ref{prop:EulerDP}, respectively, from which we derive $q$-generalization of Euler's decomposition formulas for $q$MZVs, which complement (\ref{Z-Euler}). For $1<a \leq b$:
\allowdisplaybreaks{
\begin{eqnarray} 
\label{DD-Euler-slim}
	\bar{\mathfrak{z}}_q(-a)\bar{\mathfrak{z}}_q(-b)&=& 
	\sum_{j=0}^{a}\ \sum_{i=1}^{b-j}  
		(-1)^j {a+b-1-i-j \choose a-1}{a \choose j} \bar{\mathfrak{z}}_q(-j,-i)	 					\\
	& & \hspace{0.5cm}
	 	+ \sum_{j=0}^{b}\ \sum_{i=1}^{\max(1,a-j)}  
		(-1)^j {a+b-1-i-j \choose b-1}{b \choose j} \bar{\mathfrak{z}}_q(-j,-i)	 				\nonumber\\
	& & \hspace{1cm}
		+ \sum_{j=1}^{a} (-1)^j  {a+b-1-j \choose j-1,a-j,b-j} \bar{\mathfrak{z}}_q(-j,0),								\nonumber
\end{eqnarray}} 
and
\allowdisplaybreaks{
\begin{eqnarray} 
\label{DP-Euler-slim}
	\bar{\mathfrak{z}}_q(-a)\bar{\mathfrak{z}}_q(b) &=& 
	\sum_{j=0}^{a}\ \sum_{i=1}^{b-a+j}  
		(-1)^{a-j} {b-1-i+j \choose a-1}{a \choose j} \bar{\mathfrak{z}}_q(-j,i)	 					\\
	& & \hspace{0.5cm}
	 	+ \sum_{k=1}^{a}\ \sum_{i=1}^{k}  
		(-1)^{a-k} {b-1-i+k \choose b-1}{b \choose a-k} \bar{\mathfrak{z}}_q(-k+1,-i)	 				\nonumber\\
	& & \hspace{1cm}
		+ \sum_{j=0}^{\min(a-1,b-a+1)} (-1)^{a-j}  {b-1+j \choose j,a-1-j,b-a-j} 
			\bar{\mathfrak{z}}_q(-j,0).		\nonumber
\end{eqnarray}} 	

In \cite{CEM} we have seen how the $\delta:=q\frac{d}{dq}$ derivation terms entered systematically the $q$-generalization (\ref{Z-Euler}) of Euler's decomposition formula. Further below, in subsection \ref{ssect:diffalg}, we will show how for $j>0$, one can rewrite the third summands on the right hand side of each of the above identities in terms of linear combinations of modified $q$MZVs and derivation terms $\delta\mathfrak{z}_q$.


\subsection{The $q$-quasi-shuffle structure}
\label{ssect:qqshuf}

In \cite{CEM} we saw that the product of two such modified $q$MZVs of weight $a,b>1$, using the sum representation (\ref{2qMZVs}), satisfies the following identity:
\begin{equation}
\label{qshz}
    	\bar{\mathfrak{z}}_q(a)\bar{\mathfrak{z}}_q(b) = \bar{\mathfrak{z}}_q(a,b) +\bar{\mathfrak{z}}_q(b,a) +\bar{\mathfrak{z}}_q(a+b)
				-\bar{\mathfrak{z}}_q(a,b-1) - \bar{\mathfrak{z}}_q(b,a-1) - \bar{\mathfrak{z}}_q(a+b-1).
\end{equation}
The same formula holds for any $a,b\in\mathbb Z$, as the following computation shows:
\allowdisplaybreaks{
\begin{eqnarray*}
	\lefteqn{\bar{\mathfrak{z}}_q(a)\bar{\mathfrak{z}}_q(b)}\\
	&=& 
	\sum_{l_1>l_2>0} q^{l_1+l_2} (1-q^{l_1})^{-a}(1-q^{l_2})^{-b} +
	\sum_{l_1>l_2>0} q^{l_1+l_2} (1-q^{l_1})^{-b}(1-q^{l_2})^{-a} 
	+ \sum_{l>0}  q^{2l}(1-q^{l})^{-a-b}	\\
	 &=&
	\sum_{l_1>l_2>0} \big(q^{l_1} - q^{l_1} (1-q^{l_2})\big)(1-q^{l_1})^{-a}(1-q^{l_2})^{-b}\\
	&&+\sum_{l_1>l_2>0} \big(q^{l_1} - q^{l_1} (1-q^{l_2})\big)(1-q^{l_1})^{-b}(1-q^{l_2})^{-a}		  
		- \sum_{l>0}  \big(q^{l} - q^{l} (1-q^{l})\big)(1-q^{l})^{-a-b}					\\
	&=& \bar{\mathfrak{z}}_q(a,b) + \bar{\mathfrak{z}}_q(b,a) + \bar{\mathfrak{z}}_q(a+b)  
		- \bar{\mathfrak{z}}_q(a,b-1) - \bar{\mathfrak{z}}_q(b,a-1) - \bar{\mathfrak{z}}_q(a+b-1).						\label{qshz2a}
\end{eqnarray*}}
We can formulate the above $q$-quasi-shuffles in terms of a quasi-shuffle like algebra. Let $\wt Y$ be the alphabet $\{z_n,\,n\in\mathbb Z\}$. We denote by $\wt Y^*$ the set of words with letters in $\wt Y$, and by $\mathbb Q\langle \tilde{Y}\rangle$ the free associative algebra on $\wt Y$, which is freely generated as a $\mathbb Q$-vector space by $\wt Y^*$. We equip $\wt Y$ with the internal commutative associative product $[z_iz_j]:=z_{i+j}$. For later use we introduce the notations:
\begin{eqnarray}
\bar{\mathfrak{z}}_q^{\sqshu}(z_{n_1}\cdots z_{n_k})&:=&\bar{\mathfrak{z}}_q(n_1,\ldots,n_k),\\
{\mathfrak{z}}_q^{\sqshu}(z_{n_1}\cdots z_{n_k})&:=&{\mathfrak{z}}_q(n_1,\ldots,n_k).
\end{eqnarray}
On $\mathbb{Q}\langle \tilde{Y}\rangle$ we consider the ordinary quasi-shuffle product $*$, recursively defined by:
$$
	av * bv' := a(v * bv') + b(av*v') + [ab](v*v'). 
$$
This product is known to be commutative and associative \cite{Hoffman-qsh}. Now we consider the linear operator $T$ on $\mathbb Q\langle \tilde{Y}\rangle$ defined by:
\begin{equation*}
	T(z_nv):=z_nv - z_{n-1}v.
\end{equation*}
It is obviously injective. For any $m,n\in\mathbb Z$ and for any $u,v\in\wt Y^*$ we compute:
\begin{eqnarray*}
	T(z_mu)*T(z_nv)	&=&(z_m-z_{m-1})u*(z_n-z_{n-1})v\\
					&=&(z_m-z_{m-1})\big(u*(z_n-z_{n-1})v\big)
					+(z_n-z_{n-1})\big(v*(z_m-z_{m-1})u\big)\\
					&&\hbox {\hskip 10mm}+\big((z_{m+n}-z_{m+n-1})
						-(z_{m+n-1}-z_{m+n-2})\big)(u*v)\\
					&=&T\Big(z_m\big(u*T(z_nv)\big)+z_n\big(T(z_mu)*v)\big)+T\big(z_{m+n}(u*v)\big)\Big).
\end{eqnarray*}
We then define our $q$-quasi-shuffle product by $T(u\qshu v)=Tu*Tv$ for any words $u,v$. In view of the computation above it writes:
\begin{equation*}
	z_mu \qshu z_nv=z_m\big(u*T(z_nv)\big)+z_n\big(T(z_mu)*v\big)+(z_{m+n}-z_{m+n-1})(u*v).
\end{equation*} 
In particular we have:
\begin{eqnarray*}
	z_m\qshu z_n	&=&z_m(Tz_n)+z_n(Tz_m)+Tz_{m+n}\\
				&=&z_mz_n+z_nz_m+z_{m+n}-z_mz_{n-1}-z_nz_{m-1}-z_{m+n-1}.
\end{eqnarray*}
\begin{prop}
The $q$-quasi-shuffle product {\rm $\qshu$} is commutative and associative. Moreover for any $u,v\in \wt Y$ we have:
{\rm
\begin{equation*}
	\bar{\mathfrak z}_q^{\sqshu}(u)\bar{\mathfrak z}_q^{\sqshu}(v)
		=\bar{\mathfrak z}_q^{\sqshu}(u\qshu v).
\end{equation*}
}
\end{prop}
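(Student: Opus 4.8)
The plan is to prove the three assertions of the proposition essentially for free from the corresponding properties of the ordinary quasi-shuffle $*$, the only real work being a small regularization step for the last one.

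First, recall that $\qshu$ is \emph{defined} by the rule $T(u\qshu v)=Tu*Tv$, which is legitimate precisely because $T$ is injective and because the computation displayed just before the statement shows that $\mathrm{im}(T)$ is stable under $*$. Thus $\qshu$ is nothing but the restriction of $*$ to $\mathrm{im}(T)$, transported back along the linear isomorphism $T\colon\mathbb Q\langle\wt Y\rangle\to\mathrm{im}(T)$. Since $*$ is commutative and associative \cite{Hoffman-qsh} and $T$ is injective, commutativity and associativity of $\qshu$ follow at once; for instance $T\big((u\qshu v)\qshu w\big)=(Tu*Tv)*Tw=Tu*(Tv*Tw)=T\big(u\qshu(v\qshu w)\big)$, and one cancels $T$. (The empty word stays a unit because $T\un=\un$.)

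For the homomorphism property I would realise $\bar{\mathfrak z}_q^{\sqshu}$ as a limit of honest $*$-morphisms precomposed with $T$. For each integer $N\ge 1$ introduce the truncated evaluation $\phi_N\colon\mathbb Q\langle\wt Y\rangle\to\mathbb Q[[q]]$ with $\phi_N(\un)=1$ and
\[
	\phi_N(z_{n_1}\cdots z_{n_k}):=\sum_{N\ge m_1>\cdots>m_k>0}\ \prod_{i=1}^{k}(1-q^{m_i})^{-n_i},
\]
which lives in $\mathbb Q[[q]]$ for arbitrary $n_i\in\mathbb Z$ since each $(1-q^m)^{-n}$ does. The first step is to check that every $\phi_N$ is an algebra morphism for $*$: this is the standard fact that finite iterated sums whose weights are multiplicative under the internal product $[z_az_b]=z_{a+b}$ (here $(1-q^m)^{-a}(1-q^m)^{-b}=(1-q^m)^{-(a+b)}$) satisfy the stuffle relation; I would prove it by induction on $N$, using $\phi_N(z_cw)=\phi_{N-1}(z_cw)+(1-q^N)^{-c}\phi_{N-1}(w)$ and the recursion $z_au'*z_bv'=z_a(u'*z_bv')+z_b(z_au'*v')+z_{a+b}(u'*v')$.

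The second step identifies $\lim_{N\to\infty}\phi_N\circ T$. Since $T(z_{n_1}\cdots z_{n_k})=(z_{n_1}-z_{n_1-1})z_{n_2}\cdots z_{n_k}$ and $(1-q^m)^{-n}-(1-q^m)^{-(n-1)}=q^m(1-q^m)^{-n}$, one gets
\[
	\phi_N\big(T(z_{n_1}\cdots z_{n_k})\big)=\sum_{N\ge m_1>\cdots>m_k>0}\frac{q^{m_1}}{(1-q^{m_1})^{n_1}}\prod_{i=2}^{k}(1-q^{m_i})^{-n_i}.
\]
The contribution of a fixed $m_1$ has $q$-adic valuation at least $m_1$, so as $N\to\infty$ these partial sums converge $q$-adically in $\mathbb Q[[q]]$ — and, for $|q|<1$, by the estimates of Subsection~\ref{ssect:neg-qMZVs-z} — to $\bar{\mathfrak z}_q(n_1,\ldots,n_k)=\bar{\mathfrak z}_q^{\sqshu}(z_{n_1}\cdots z_{n_k})$; extending linearly, $\phi_N(Tw)\to\bar{\mathfrak z}_q^{\sqshu}(w)$ for every $w$, and by bilinearity it suffices to treat words. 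Then, using continuity of multiplication in $\mathbb Q[[q]]$, the morphism property of $\phi_N$, and $Tu*Tv=T(u\qshu v)$,
\[
	\bar{\mathfrak z}_q^{\sqshu}(u)\,\bar{\mathfrak z}_q^{\sqshu}(v)=\lim_N\big(\phi_N(Tu)\,\phi_N(Tv)\big)=\lim_N\phi_N(Tu*Tv)=\lim_N\phi_N\big(T(u\qshu v)\big)=\bar{\mathfrak z}_q^{\sqshu}(u\qshu v).
\]
The main obstacle, I expect, is this regularization point rather than any hard computation: the ``untruncated'' evaluation $z_{n_1}\cdots z_{n_k}\mapsto\sum_{m_1>\cdots>m_k>0}\prod(1-q^{m_i})^{-n_i}$ diverges (each summand has $q$-valuation $0$), so one cannot simply apply a stuffle morphism to the monomials themselves; the factor $q^{m_1}$ created by $T$ is exactly what restores convergence. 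Hence the cut-off $N$ must be kept until the last step, and one has to be a little careful both in checking multiplicativity of $\phi_N$ in the presence of the nested cut-offs and in justifying the interchange of the limit with the product.
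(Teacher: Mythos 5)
Your proposal is correct, and its first half (commutativity and associativity of $\qshu$ via injectivity of $T$, stability of the image of $T$ under $*$, and Hoffman's result for $*$) is exactly the paper's argument. For the morphism property you take a genuinely different, though closely related, route: the paper works directly with the infinite iterated series (legitimate, since all of them are well-defined elements of $q\,\mathbb Q[[q]]$), splits the product according to whether the two outermost summation indices satisfy $k_1>k_{r+1}$, $k_1<k_{r+1}$ or $k_1=k_{r+1}$, and uses the identity $q^{k_1}q^{k_{r+1}}=q^{k_1}\bigl(1-(1-q^{k_{r+1}})\bigr)$ to make the twist $T$ appear, thereby recognizing the recursive formula for $z_{n_1}\cdots z_{n_r}\qshu z_{n_{r+1}}\cdots z_{n_{r+s}}$ (the same computation as for two letters, carried out for two words). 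You instead factor the argument through the truncated evaluations $\phi_N$, prove (correctly, by induction on $N$) that each $\phi_N$ is a morphism for the ordinary quasi-shuffle $*$, observe that $T$ creates the factor $q^{m_1}$ which restores $q$-adic convergence, and pass to the limit $N\to\infty$ using continuity of multiplication in $\mathbb Q[[q]]$. Your version buys an explicit separation of the combinatorics (stuffle for finite nested sums, with no numerators) from the regularization/convergence issue, and it makes precise why one cannot simply invoke a stuffle morphism on the untruncated sums; the paper's version is shorter and shows directly, inside the series manipulation, where the operator $T$ comes from. Both are sound; your extra care with the cut-off and the interchange of limit and product is the only place where the two proofs differ in substance rather than presentation.
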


\begin{proof}
Commutativity and associativity of $\qshu$ come from the injectivity of $T$ and from the fact that the ordinary quasi-shufle product $*$ is commutative and associative. The second assertion has been already proven when $u$ and $v$ are two letters. The computation for two words is entirely similar:
\allowdisplaybreaks{
\begin{eqnarray*}
	\lefteqn{\bar{\mathfrak z}_q^{\sqshu}(z_{n_1}\cdots z_{n_r})
	\bar{\mathfrak z}_q^{\sqshu}(z_{n_{r+1}}\cdots z_{n_{r+s}})}\\
	&=&\sum_{{k_1>\cdots >k_r\atop k_{r+1}>\cdots>k_{r+s}}}
		\frac{q^{k_1+k_{r+1}}}{(1-q^{k_1})^{n_1}\cdots(1-q^{k_{r+s}})^{n_{r+s}}}\\
	&=&\sum_{{k_1>k_{r+1},\,k_1>\cdots >k_r\atop k_{r+1}>\cdots>k_{r+s}}}
		\frac{q^{k_1+k_{r+1}}}{(1-q^{k_1})^{n_1}\cdots(1-q^{k_{r+s}})^{n_{r+s}}}
		+\sum_{{k_1<k_{r+1},\,k_1>\cdots >k_r\atop k_{r+1}>\cdots>k_{r+s}}}
		\frac{q^{k_1+k_{r+1}}}{(1-q^{k_1})^{n_1}\cdots(1-q^{k_{r+s}})^{n_{r+s}}}\\
	&&\hskip 10mm+\sum_{{k_1=k_{r+1},\,k_1>\cdots >k_r\atop k_{r+1}>\cdots>k_{r+s}}}
		\frac{q^{2k_1}}{(1-q^{k_1})^{n_1}\cdots(1-q^{k_{r+s}})^{n_{r+s}}}\\
	&=&\sum_{{k_1>k_{r+1},\,k_1>\cdots >k_r\atop k_{r+1}>\cdots>k_{r+s}}}
		\frac{q^{k_1}-q^{k_1}(1-q^{k_{r+1})}}{(1-q^{k_1})^{n_1}\cdots(1-q^{k_{r+s}})^{n_{r+s}}}
		+\sum_{{k_1<k_{r+1},\,k_1>\cdots >k_r\atop k_{r+1}>\cdots>k_{r+s}}}
		\frac{q^{k_{r+1}}+q^{k_{r+1}}(1-q^{k_1})}{(1-q^{k_1})^{n_1}\cdots(1-q^{k_{r+s}})^{n_{r+s}}}\\
	&&\hskip 10mm+\sum_{{k_1=k_{r+1},\,k_1>\cdots >k_r\atop k_{r+1}>\cdots>k_{r+s}}}
	\frac{q^{k_1}-q^{k_1}(1-q^{k_{1})}}{(1-q^{k_1})^{n_1}\cdots(1-q^{k_{r+s}})^{n_{r+s}}}.\\
	&=&\bar{\mathfrak z}_q^{\sqshu}\Big(z_{n_1}\big(z_{n_2}\cdots z_{n_r}*T(z_{n_{r+1}}\cdots z_{n_{r+s}})\big)
	+z_{n_{r+1}}\big(T(z_{n_1}\cdots z_{n_r})*z_{n_{r+2}}\cdots z_{n_{r+s}})\big)\\
	&&\hskip 60mm+(z_{n_1+n_{r+1}}-z_{n_1+n_{r+1}-1})(z_{n_2}\cdots z_{n_r}*z_{n_{r+2}}\cdots z_{n_{r+s}})\Big)\\
	&=&\bar{\mathfrak z}_q^{\sqshu}(z_{n_1}\cdots z_{n_r}\qshu z_{n_{r+1}}\cdots z_{n_{r+s}}),
\end{eqnarray*}}
which proves the claim.
\end{proof}


\subsection{The differential algebra structure}
\label{ssect:diffalg}

We introduce the derivation $\delta:=q\frac{d}{dq}$. For this it will be convenient to use $q$MZVs with arguments which can be zero. Recall that for example:
\allowdisplaybreaks{
\begin{eqnarray*}
	\bar{\mathfrak{z}}_q(0)	&=&\sum_{m>0}q^m=\frac{q}{1-q},\\
	\bar{\mathfrak{z}}_q(0,0)	&=&\sum_{m_1>m_2>0}q^{m_1}
						=\sum_{m>0}(m-1)q^m=\left(\frac{q}{1-q}\right)^2,\\
	\bar{\mathfrak{z}}_q(a,0)	&=&\sum_{m_1>m_2>0}\frac{q^{m_1}}{(1-q^{m_1})^a}
 						=\sum_{m>0}\frac{(m-1)q^{m}}{(1-q^{m_1})^a}.
\end{eqnarray*}}

\begin{prop}
\label{prop:q-deriv}
For any $a \in \mathbb{Z}$ we have:
\begin{equation}
\label{deriv-formula}
	\delta\bar{\mathfrak{z}}_q(a)=(1-a)\big(\bar{\mathfrak{z}}_q(a,0)+\bar{\mathfrak{z}}_q(a)\big)
							+a\big(\bar{\mathfrak{z}}_q(a+1,0)+\bar{\mathfrak{z}}_q(a+1)\big).
\end{equation}
\end{prop}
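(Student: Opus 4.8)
The plan is to compute $\delta\bar{\mathfrak{z}}_q(a)$ directly from the series representation $\bar{\mathfrak{z}}_q(a)=\sum_{m>0}q^m(1-q^m)^{-a}$ recalled just above the statement, differentiating term by term. Since each power of $q$ in this series receives contributions from only finitely many values of $m$ (the whole object lies in $q\mathbb{Q}[[q]]$), termwise application of $\delta=q\frac{d}{dq}$ is legitimate; in particular no analytic subtlety arises, and the argument is uniform in the sign of $a\in\mathbb{Z}$.

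First I would carry out the derivative of a single summand. Using $\delta(q^m)=mq^m$ and the chain rule,
\begin{equation*}
	\delta\big(q^m(1-q^m)^{-a}\big)=mq^m(1-q^m)^{-a}+a\,mq^{2m}(1-q^m)^{-a-1}.
\end{equation*}
This is the only genuine computation in the proof. The key algebraic manipulation is then to rewrite $q^{2m}=q^m-q^m(1-q^m)$, which turns the term $a\,mq^{2m}(1-q^m)^{-a-1}$ into $a\,mq^m(1-q^m)^{-a-1}-a\,mq^m(1-q^m)^{-a}$. Collecting terms and summing over $m$ gives
\begin{equation*}
	\delta\bar{\mathfrak{z}}_q(a)=\sum_{m>0}\Big((1-a)\,mq^m(1-q^m)^{-a}+a\,mq^m(1-q^m)^{-a-1}\Big).
\end{equation*}

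Finally I would split $m=(m-1)+1$ in each of the two sums and recognise, from the formulas for $\bar{\mathfrak{z}}_q(c,0)$ recalled in the excerpt, that $\sum_{m>0}mq^m(1-q^m)^{-c}=\bar{\mathfrak{z}}_q(c,0)+\bar{\mathfrak{z}}_q(c)$ for every $c\in\mathbb{Z}$, since $\bar{\mathfrak{z}}_q(c,0)=\sum_{m>0}(m-1)q^m(1-q^m)^{-c}$ and $\bar{\mathfrak{z}}_q(c)=\sum_{m>0}q^m(1-q^m)^{-c}$. Applying this with $c=a$ to the first sum and with $c=a+1$ to the second yields exactly \eqref{deriv-formula}.

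There is essentially no obstacle here: the only points requiring a little care are justifying the termwise differentiation (immediate in the formal-series setting, as noted) and keeping the signs straight in the substitution $q^{2m}=q^m-q^m(1-q^m)$, which is what produces the characteristic coefficients $1-a$ and $a$.
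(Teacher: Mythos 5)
Your proposal is correct and follows essentially the same route as the paper's own proof: termwise application of $\delta$, the substitution $q^{2m}=q^m-q^m(1-q^m)$, and the identification $\sum_{m>0}mq^m(1-q^m)^{-c}=\bar{\mathfrak{z}}_q(c,0)+\bar{\mathfrak{z}}_q(c)$ via $m=(m-1)+1$. The only difference is cosmetic (you collect the two sums before recognising the $q$MZVs, and you make the formal-series justification of termwise differentiation explicit, which the paper leaves implicit).
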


\begin{proof}
This is a straightforward computation:
\allowdisplaybreaks{
\begin{eqnarray*}
	\delta\bar{\mathfrak{z}}_q(a)	&=&\sum_{m>0}\delta\frac{q^m}{(1-q^m)^a}\\
						&=&\sum_{m>0}\frac{mq^m}{(1-q^m)^a}
							+\sum_{m>0}\frac{amq^{2m}}{(1-q^m)^{a+1}}\\
						&=&\bar{\mathfrak{z}}_q(a,0)+\bar{\mathfrak{z}}_q(a)
							+\sum_{m>0}\frac{-amq^m(1-q^m)}{(1-q^m)^{a+1}}
							+\sum_{m>0}\frac{amq^m}{(1-q^m)^{a+1}}\\
						&=&\bar{\mathfrak{z}}_q(a,0)
							+\bar{\mathfrak{z}}_q(a)
							-a\big(\bar{\mathfrak{z}}_q(a,0)
							+\bar{\mathfrak{z}}_q(a)\big)
							+a\big(\bar{\mathfrak{z}}_q(a+1,0)+\bar{\mathfrak{z}}_q(a+1)\big)\\
						&=&(1-a)\big(\bar{\mathfrak{z}}_q(a,0)+\bar{\mathfrak{z}}_q(a)\big)
							+a\big(\bar{\mathfrak{z}}_q(a+1,0)+\bar{\mathfrak{z}}_q(a+1)\big).
\end{eqnarray*}}
\end{proof}

\begin{exam}\label{exam:deriv}
{\rm{For later use we display:
\begin{eqnarray*}
	\delta\bar{\mathfrak{z}}_q(2) &=&-\big(\bar{\mathfrak{z}}_q(2,0)+\bar{\mathfrak{z}}_q(2)\big)
								+2\big(\bar{\mathfrak{z}}_q(3,0)+\bar{\mathfrak{z}}_q(3)\big),\\
	\delta\bar{\mathfrak{z}}_q(1) &=& \bar{\mathfrak{z}}_q(2,0)+\bar{\mathfrak{z}}_q(2),\\
	\delta\bar{\mathfrak{z}}_q(0) &=&\bar{\mathfrak{z}}_q(0,0)+\bar{\mathfrak{z}}_q(0),\\
	\delta\bar{\mathfrak{z}}_q(-1) &=& 2 \big(\bar{\mathfrak{z}}_q(-1,0)+\bar{\mathfrak{z}}_q(-1)\big)
					-\big(\bar{\mathfrak{z}}_q(0,0)+\bar{\mathfrak{z}}_q(0)\big),\\
	\delta\bar{\mathfrak{z}}_q(-2) &=& 3 \big(\bar{\mathfrak{z}}_q(-2,0)+\bar{\mathfrak{z}}_q(-2)\big)
					-2\big(\bar{\mathfrak{z}}_q(-1,0)+\bar{\mathfrak{z}}_q(-1)\big).
\end{eqnarray*}
Note that $\bar{\mathfrak{z}}_q(1,0)+\bar{\mathfrak{z}}_q(1)$ does not appear.}}
\end{exam}
\goodbreak
\begin{prop}
\label{prop:derivations}
For any integer $a\ge 2$ we have:
\begin{equation}
\label{deriv1}
	\bar{\mathfrak{z}}_q(a,0)+\bar{\mathfrak{z}}_q(a)
	=\frac{1}{a-1}\left(\sum_{j=2}^{a-1} \delta \bar{\mathfrak{z}}_q(j)+\bar{\mathfrak{z}}_q(2,0)
			+\bar{\mathfrak{z}}_q(2)\right).
\end{equation}
For $a\ge 0$ we have:
\begin{equation}
\label{deriv2}
	\bar{\mathfrak{z}}_q(-a,0)+\bar{\mathfrak{z}}_q(-a)
	=\frac{1}{a+1}\left(\sum_{j=1}^a \delta \bar{\mathfrak{z}}_q(-j)+\bar{\mathfrak{z}}_q(0,0)
				+\bar{\mathfrak{z}}_q(0)\right).
\end{equation}
\end{prop}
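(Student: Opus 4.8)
The plan is to recast Proposition~\ref{prop:q-deriv} as a first-order linear recursion in a single sequence and then sum a telescoping series; no new ingredient beyond \eqref{deriv-formula} is needed. Concretely, I would set $u_a:=\bar{\mathfrak{z}}_q(a,0)+\bar{\mathfrak{z}}_q(a)$ for $a\in\mathbb Z$, so that \eqref{deriv-formula} becomes $\delta\bar{\mathfrak{z}}_q(a)=(1-a)u_a+a\,u_{a+1}$. Rearranging gives the key identity
$$
	a\,u_{a+1}-(a-1)u_a=\delta\bar{\mathfrak{z}}_q(a),
$$
whose left-hand side is exactly the increment $b_{a+1}-b_a$ of the sequence $b_a:=(a-1)u_a$.

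For \eqref{deriv1} I would fix $a\ge 2$ and sum the key identity over $j=2,\dots,a-1$: the left-hand side telescopes to $(a-1)u_a-1\cdot u_2$, the right-hand side is $\sum_{j=2}^{a-1}\delta\bar{\mathfrak{z}}_q(j)$, and dividing by $a-1$ gives the claim; the boundary case $a=2$ reduces to the tautology $u_2=u_2$ via the empty sum. For \eqref{deriv2} I would instead apply \eqref{deriv-formula} at the argument $-j$, obtaining $(j+1)u_{-j}-j\,u_{-(j-1)}=\delta\bar{\mathfrak{z}}_q(-j)$, and sum over $j=1,\dots,a$: the left-hand side telescopes to $(a+1)u_{-a}-1\cdot u_0$, and dividing by $a+1$ gives the claim, with $a=0$ once more the empty-sum tautology. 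Since $u_0=\bar{\mathfrak{z}}_q(0,0)+\bar{\mathfrak{z}}_q(0)$ and $u_2=\bar{\mathfrak{z}}_q(2,0)+\bar{\mathfrak{z}}_q(2)$, these are precisely \eqref{deriv1} and \eqref{deriv2}.

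The main obstacle is, frankly, only bookkeeping: all the substance is already packed into Proposition~\ref{prop:q-deriv}, and one merely has to be careful with the index ranges and the two endpoint terms of each telescoping sum. A useful sanity check is to match the outcome against the explicit expressions for $\delta\bar{\mathfrak{z}}_q(2)$, $\delta\bar{\mathfrak{z}}_q(3)$ and $\delta\bar{\mathfrak{z}}_q(-1)$ in Example~\ref{exam:deriv}; this also explains why the combination $u_1=\bar{\mathfrak{z}}_q(1,0)+\bar{\mathfrak{z}}_q(1)$ never appears, so that the sums start at $j=2$ and $j=1$ respectively.
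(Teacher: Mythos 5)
Your proposal is correct and is essentially the paper's argument: both rest solely on rearranging \eqref{deriv-formula} into the first-order recursion $a\,u_{a+1}-(a-1)u_a=\delta \bar{\mathfrak{z}}_q(a)$ (equation \eqref{rec-delta} in the paper), the only difference being that you sum it telescopically while the paper runs the equivalent induction on $a$ from the base cases $a=2$ and $a=0$ of Example \ref{exam:deriv}. The index bookkeeping and the observation that $\bar{\mathfrak{z}}_q(1,0)+\bar{\mathfrak{z}}_q(1)$ never enters both match the paper.
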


\begin{proof}
According to Example \ref{exam:deriv}, first (resp. second) assertion is valid for $a=2$ (resp. $a=0$). Note that the sums involving $\delta$-terms are empty in this case. From \eqref{deriv-formula} we get:
\begin{equation}\label{rec-delta}
\bar{\mathfrak{z}}_q(a+1,0)+\bar{\mathfrak{z}}_q(a+1)=\frac{1}{a}\delta\bar{\mathfrak{z}}_q(a)
+\frac{a-1}{a}\big(\bar{\mathfrak{z}}_q(a,0)+\bar{\mathfrak{z}}_q(a)\big).
\end{equation}
Plugging first assertion for $a$ inside \eqref{rec-delta} returns it for $a+1$. Second assertion is treated similarly, which proves Proposition \ref {prop:derivations} by induction on $a$. Note that $\bar{\mathfrak{z}}_q(1,0)$ and $\bar{\mathfrak{z}}_q(1)$ do not enter into this game.
\end{proof}
\vskip 3mm
\noindent Proposition \ref{prop:q-deriv} generalizes to any $q$MZV:
\begin{prop}
\label{prop:q-deriv-Gen}
For any $a_1,\ldots,a_k \in \mathbb{Z}^k$ we have:
\allowdisplaybreaks{
\begin{eqnarray*}
	\delta\bar{\mathfrak{z}}_q(a_1,\ldots,a_k)
		&=&\left(k-\sum_{r=1}^k a_r(k-r+1)\right)\bar{\mathfrak{z}}_q(a_1,\ldots,a_k)\\
	& &	+\sum_{r=1}^k a_r(k-r+1)\bar{\mathfrak{z}}_q(a_1,\ldots,a_{r}+1,\ldots,a_k)\\
	&& +\sum_{s=1}^k\left(1-\sum_{r=1}^s a_r\right)\bar{\mathfrak{z}}_q(a_1,\ldots,a_s,0,a_{s+1},\ldots,a_k) \\
	&&+\sum_{1\le r\le s\le k}a_r \bar{\mathfrak{z}}_q(a_1,\ldots,a_r+1,\ldots,a_s,0,a_{s+1},\ldots,a_k).
\end{eqnarray*}}
\end{prop}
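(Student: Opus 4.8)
The plan is to proceed exactly as in the single-variable case treated in Proposition \ref{prop:q-deriv}, applying the derivation $\delta=q\frac{d}{dq}$ termwise to the iterated sum defining $\bar{\mathfrak z}_q(a_1,\ldots,a_k)$ and then reorganizing the resulting expression into a $\mathbb Z$-linear combination of $q$MZVs. Concretely, I would write
\[
\bar{\mathfrak z}_q(a_1,\ldots,a_k)=\sum_{m_1>\cdots>m_k>0}\frac{q^{m_1}}{(1-q^{m_1})^{a_1}\cdots(1-q^{m_k})^{a_k}},
\]
and use that $\delta$ is a derivation so that $\delta$ acts on each factor $q^{m_1}$ and $(1-q^{m_j})^{-a_j}$ separately. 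Since $\delta q^{m_1}=m_1 q^{m_1}$ and $\delta (1-q^{m_j})^{-a_j}=a_j m_j q^{m_j}(1-q^{m_j})^{-a_j-1}$, the differentiated sum becomes a linear combination of sums carrying an extra polynomial factor $m_1$ or $m_j$ in the numerator.

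The next step is the bookkeeping device already used in the proof of Proposition \ref{prop:q-deriv}: a factor $m_j$ in the numerator of the summand indexed by $m_1>\cdots>m_k>0$ is rewritten as $\sum_{s=j}^{k}(m_s-m_{s+1})+ (\text{contribution from }m_{k}>0)$, or more directly one uses the telescoping identity $m_j=\#\{n:\,0<n<m_j\}+1$ combined with $m_j = (m_j-m_{j+1})+(m_{j+1}-m_{j+2})+\cdots$. Inserting a new summation index $n$ with $m_s>n>m_{s+1}$ produces a $q$MZV with a zero inserted in position $s$, namely $\bar{\mathfrak z}_q(a_1,\ldots,a_s,0,a_{s+1},\ldots,a_k)$; the leftover ``$+1$'' term reproduces the original $q$MZV (possibly with one argument raised by $1$). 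The identity $q^{m_j}=1-(1-q^{m_j})$, applied to absorb the extra factor $q^{m_j}$ coming from $\delta(1-q^{m_j})^{-a_j}$, converts $(1-q^{m_j})^{-a_j-1}$ into $(1-q^{m_j})^{-a_j}-(1-q^{m_j})^{-a_j-1}$, which accounts for the shift $a_r\mapsto a_r+1$ in the stated formula. Carrying this out for the $\delta q^{m_1}$ term (which yields a factor $m_1$, hence an index insertion in \emph{every} position $1\le s\le k$, all with weight $1$) and for each $\delta(1-q^{m_r})^{-a_r}$ term (which yields a factor $a_r m_r$, hence insertions only in positions $s\ge r$, each also raising $a_r$ by one) and collecting the coefficient of each resulting $q$MZV gives precisely the four sums in the statement: the coefficient $\bigl(k-\sum_r a_r(k-r+1)\bigr)$ of the undeformed $q$MZV, the shift terms $\sum_r a_r(k-r+1)\bar{\mathfrak z}_q(\ldots,a_r+1,\ldots)$, and the two families of zero-insertion terms with coefficients $\bigl(1-\sum_{r=1}^s a_r\bigr)$ and $a_r$ respectively.

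Let me indicate the origin of the combinatorial weights. A factor $m_s$ in the numerator arises from two sources: from $\delta q^{m_1}$ it appears with coefficient $1$ whenever $s=1$, but the telescoping $m_1=\sum_{t=1}^{k}(\text{insertion in position }t)$ spreads it over all positions; more carefully, $m_1 = (m_1-m_2)+(m_2-m_3)+\cdots+(m_{k-1}-m_k)+m_k$, and the term $(m_t-m_{t+1})$ contributes an index insertion in position $t$ while $m_k$ contributes position $k$, each with weight $1$, giving the $\sum_{s=1}^k$. From $\delta(1-q^{m_r})^{-a_r}$ one gets the factor $a_r m_r$; telescoping $m_r=\sum_{t=r}^{k}(\ldots)$ spreads it over positions $s$ with $r\le s\le k$, each with weight $a_r$ — this is the last sum $\sum_{1\le r\le s\le k}a_r$. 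Summing the weight-$1$ contributions (from $\delta q^{m_1}$) and the weight-$a_r$ contributions (from all $r\le s$) that land on a fixed position $s$ gives total coefficient $1+\sum_{r=1}^s a_r\cdot(-1)$ after absorbing the sign from $q^{m_r}=1-(1-q^{m_r})$; that sign bookkeeping is exactly what turns $\sum_{r\le s} a_r$ into $1-\sum_{r=1}^s a_r$ for the genuine zero-insertion family while leaving the $a_r$-weighted family (coming from the $(1-q^{m_r})^{-a_r}$ piece rather than the $(1-q^{m_r})^{-a_r-1}$ piece) with coefficient $+a_r$. Finally, the coefficient of the undeformed $\bar{\mathfrak z}_q(a_1,\ldots,a_k)$ is the ``$+1$'' leftovers: $+k$ from the $k$ positions in the $\delta q^{m_1}$ telescoping minus $\sum_r a_r(k-r+1)$ from the $(k-r+1)$ positions $s\ge r$ in each $\delta(1-q^{m_r})$ telescoping.

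I expect the main obstacle to be purely organizational rather than conceptual: keeping track of signs when using $q^{m_r}=1-(1-q^{m_r})$ to lower the exponent from $a_r+1$ back toward $a_r$, and making sure the index-insertion identity is applied with the correct strict inequalities so that no boundary terms (indices equal to $0$ or collisions $m_s=m_{s+1}$) are lost or double-counted. A clean way to avoid error is to first prove the $k=1$ case — which is Proposition \ref{prop:q-deriv}, already established — and then either induct on $k$ by splitting off the innermost summation, or, more symmetrically, treat the general $k$ directly by the termwise differentiation above, verifying the claimed coefficients position by position. One should also double-check the edge behavior flagged in Example \ref{exam:deriv}, namely that the combination $\bar{\mathfrak z}_q(1,0)+\bar{\mathfrak z}_q(1)$ never appears; in the present formula this corresponds to the fact that when a shift $a_r\mapsto a_r+1$ would produce a leading argument equal to $1$ together with a zero insertion, the coefficient $\bigl(1-\sum_{r=1}^s a_r\bigr)$ or the sign cancellation conspires to kill it, which serves as a useful consistency check on the final expression.
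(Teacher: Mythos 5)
Your proposal follows the paper's own proof essentially verbatim: termwise application of $\delta$ to the iterated sum, the substitution $q^{m_r}=1-(1-q^{m_r})$, and the decomposition $m_r=(k-r+1)+(m_r-m_{r+1}-1)+\cdots+(m_{k-1}-m_k-1)+(m_k-1)$ whose gap terms count the zero-insertions, so the approach is correct and complete. Only fix one slip in the bookkeeping narrative: $q^{m_r}(1-q^{m_r})^{-a_r-1}=(1-q^{m_r})^{-a_r-1}-(1-q^{m_r})^{-a_r}$, so the family $a_r\,\bar{\mathfrak{z}}_q(a_1,\ldots,a_r+1,\ldots,a_s,0,\ldots,a_k)$ with coefficient $+a_r$ comes from the $(1-q^{m_r})^{-a_r-1}$ piece, while the $(1-q^{m_r})^{-a_r}$ piece (carrying the minus sign) is what produces the $-\sum_{r\le s}a_r$ inside the coefficient $1-\sum_{r=1}^{s}a_r$ and the $-\sum_r a_r(k-r+1)$ in front of the undeformed term, consistent with the final coefficients you state.
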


\begin{proof}
The computation is more involved now:
\allowdisplaybreaks{
\begin{eqnarray*}
	\lefteqn{\delta\bar{\mathfrak{z}}_q(a_1,\ldots,a_k)
	=\sum_{m_1>\cdots >m_k>0}\delta\frac{q^{m_1}}{(1-q^{m_1})^{a_1}\cdots(1-q^{m_k})^{a_k}}}\\
	&=&\sum_{m_1>\cdots >m_k>0}\frac{m_1q^{m_1}}{(1-q^{m_1})^{a_1}\cdots(1-q^{m_k})^{a_k}}\\
	& &	+\sum_{m_1>\cdots >m_k>0}\sum_{r=1}^k\frac{a_rm_rq^{m_1+m_r}}
		{(1-q^{m_1})^{a_1}\cdots(1-q^{m_r})^{a_r+1}\cdots(1-q^{m_k})^{a_k}}\\
	&=&\sum_{m_1>\cdots >m_k>0}\frac{m_1q^{m_1}}{(1-q^{m_1})^{a_1}\cdots(1-q^{m_k})^{a_k}}\\
	& &	+\sum_{m_1>\cdots >m_k>0}\sum_{r=1}^k
			\frac{a_rm_rq^{m_1}-a_rm_rq^{m_1}(1-q^{m_r})}
			{(1-q^{m_1})^{a_1}\cdots(1-q^{m_r})^{a_r+1}\cdots(1-q^{m_k})^{a_k}}.
\end{eqnarray*}}
We decompose the integers $m_r$ as:
\begin{equation*}
	m_r=(k-r+1)+(m_r-m_{r+1}-1)+\cdots+(m_{k-1}-m_k-1)+(m_k-1),
\end{equation*}
which gives:
\goodbreak
\allowdisplaybreaks{
\begin{eqnarray*}
	\lefteqn{\delta\bar{\mathfrak{z}}_q(a_1,\ldots,a_k)}\\
	&=&k\bar{\mathfrak{z}}_q(a_1,\ldots,a_k)
		+\sum_{s=1}^k\bar{\mathfrak{z}}_q(a_1,\ldots,a_s,0,a_{s+1},\ldots,a_k)\\
	&& + \sum_{r=1}^k a_r\left((k-r+1)\bar{\mathfrak{z}}_q(a_1,\ldots,a_r+1,\ldots, a_k)
		+\sum_{s=r}^k\bar{\mathfrak{z}}_q(a_1,\ldots a_r+1,\ldots,a_s,0,a_{s+1},\ldots,a_k)\right)\\
	&& - \sum_{r=1}^k a_r\left((k-r+1)\bar{\mathfrak{z}}_q(a_1,\ldots,a_r,\ldots, a_k)
		+\sum_{s=r}^k\bar{\mathfrak{z}}_q(a_1,\ldots,a_r,\ldots,a_s,0,a_{s+1},\ldots,a_k)\right)\\
	&=&\left((k-\sum_{r=1}^k a_r(k-r+1)\right)\bar{\mathfrak{z}}_q(a_1,\ldots,a_k)
		+\sum_{r=1}^k a_r(k-r+1)\bar{\mathfrak{z}}_q(a_1,\ldots,a_{r+1},\ldots,a_k)\\
	&& + \sum_{s=1}^k\left(1-\sum_{r=1}^s a_r\right)\bar{\mathfrak{z}}_q(a_1,\ldots,a_s,0,a_{s+1},\ldots,a_k)\\
	& &	+\sum_{1\le r\le s\le k}a_r \bar{\mathfrak{z}}_q(a_1,\ldots,a_r+1,\ldots,a_s,0,a_{s+1},\ldots,a_k)
\end{eqnarray*}}
\end{proof}

With Proposition \ref{prop:derivations} at hand let us return to (\ref{DP-Euler-slim}) and (\ref{DD-Euler-slim}), i.e., the $q$-generalizations of Euler's decomposition formula for negative values. First, observe that from (\ref{deriv2}) it follows that:
$$
	\bar{\mathfrak{z}}_q(-1,0) = \frac{1}{2} \delta\bar{\mathfrak{z}}_q(-1)  -  \bar{\mathfrak{z}}_q(-1)
					+  \frac{1}{2}\big(\bar{\mathfrak{z}}_q(0,0)+ \bar{\mathfrak{z}}_q(0)\big).
$$

As a more involved example, we consider the product $\bar{\mathfrak{z}}_q(-2)\bar{\mathfrak{z}}_q(-2)$. The last term on the right hand side of (\ref{DP-Euler-slim}) yields:
$$
	 \sum_{j=1}^{2} (-1)^j  {3-j \choose j-1,2-j,2-j} \bar{\mathfrak{z}}_q(-j,0) 
	 		= - 2 \bar{\mathfrak{z}}_q(-1,0) + \bar{\mathfrak{z}}_q(-2,0).
$$ 
Returning to Example \ref{exam:deriv}, we see that:
$$
	- 2 \bar{\mathfrak{z}}_q(-1,0) + \bar{\mathfrak{z}}_q(-2,0) 
	= -\frac{2}{3} \delta\bar{\mathfrak{z}}_q(-1) + \frac{1}{3}\delta\bar{\mathfrak{z}}_q(-2) 
	-  \bar{\mathfrak{z}}_q(-2)
	+ 2 \bar{\mathfrak{z}}_q(-1) 
	-  \frac{2}{3}\big(\bar{\mathfrak{z}}_q(0,0)+\bar{\mathfrak{z}}_q(0)\big).
$$

From this we deduce a formula expressing linear combinations of $ \bar{\mathfrak{z}}_q(-j,0)$ in terms of $ \delta\bar{\mathfrak{z}}_q(-i)$, $\bar{\mathfrak{z}}_q(-k)$ and $\bar{\mathfrak{z}}_q(0,0)+\bar{\mathfrak{z}}_q(0)$.

\begin{cor}
Let $\kappa_i$ be coefficients in $\mathbb{Q}$.   
\label{cor:deriv-terms}
$$
	\sum_{i=1}^n \kappa_i \bar{\mathfrak{z}}_q(-i,0) 
	=\sum_{i=1}^n  \left(\sum_{j=1}^{n-i+1} \frac{\kappa_{n+1-j}}{n+2-j}\right)\delta \bar{\mathfrak{z}}_q(-i) 
	- \sum_{i=1}^n \kappa_i\bar{\mathfrak{z}}_q(-i) 
		+ \Big(\sum_{i=1}^n \frac{\kappa_i}{i+1}\Big)(\bar{\mathfrak{z}}_q(0,0)
				+\bar{\mathfrak{z}}_q(0)).
$$
\end{cor}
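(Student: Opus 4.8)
The plan is to substitute the identity \eqref{deriv2} of Proposition \ref{prop:derivations} term by term and then reorganize the resulting double sum by interchanging the order of summation. First I would apply \eqref{deriv2} with $a=i$ for each $i\in\{1,\ldots,n\}$ (legitimate since $i\ge 1\ge 0$), obtaining
$$
	\bar{\mathfrak{z}}_q(-i,0)
	=-\bar{\mathfrak{z}}_q(-i)+\frac{1}{i+1}\left(\sum_{j=1}^i \delta\bar{\mathfrak{z}}_q(-j)+\bar{\mathfrak{z}}_q(0,0)+\bar{\mathfrak{z}}_q(0)\right).
$$
Multiplying by $\kappa_i$ and summing over $i$ from $1$ to $n$ immediately produces the term $-\sum_{i=1}^n\kappa_i\bar{\mathfrak{z}}_q(-i)$, the term $\big(\sum_{i=1}^n\frac{\kappa_i}{i+1}\big)\big(\bar{\mathfrak{z}}_q(0,0)+\bar{\mathfrak{z}}_q(0)\big)$, and the double sum $\sum_{i=1}^n\frac{\kappa_i}{i+1}\sum_{j=1}^i\delta\bar{\mathfrak{z}}_q(-j)$.

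Next I would interchange the two summations in the last term: for fixed $j$ the index $i$ runs over $\{j,\ldots,n\}$, so
$$
	\sum_{i=1}^n\frac{\kappa_i}{i+1}\sum_{j=1}^i\delta\bar{\mathfrak{z}}_q(-j)
	=\sum_{j=1}^n\left(\sum_{i=j}^n\frac{\kappa_i}{i+1}\right)\delta\bar{\mathfrak{z}}_q(-j).
$$
To bring this into the form stated in the corollary I would perform the substitution $i\mapsto n+1-l$ in the inner sum, which gives $\sum_{i=j}^n\frac{\kappa_i}{i+1}=\sum_{l=1}^{n-j+1}\frac{\kappa_{n+1-l}}{n+2-l}$; after renaming the outer dummy $j$ back to $i$ (and the inner $l$ back to $j$) this is precisely the coefficient of $\delta\bar{\mathfrak{z}}_q(-i)$ claimed in the statement. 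Collecting the three pieces yields the asserted identity.

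There is no real obstacle: the argument is just $\mathbb{Q}$-linearity of $\bar{\mathfrak{z}}_q$ combined with \eqref{deriv2}, and the only thing to keep track of is the double reindexing, which is purely mechanical. Note also that $\bar{\mathfrak{z}}_q(1,0)$ and $\bar{\mathfrak{z}}_q(1)$ never enter, consistent with the remark following Proposition \ref{prop:derivations} and with the fact that \eqref{deriv2} only feeds in $\delta\bar{\mathfrak{z}}_q(-j)$ for $j\ge 1$.
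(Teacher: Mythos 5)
Your proof is correct and follows exactly the route the paper intends: the corollary is deduced from identity \eqref{deriv2} of Proposition \ref{prop:derivations} by $\mathbb{Q}$-linearity and an interchange of the double summation, and your reindexing indeed reproduces the stated coefficient $\sum_{j=1}^{n-i+1}\frac{\kappa_{n+1-j}}{n+2-j}=\sum_{m=i}^{n}\frac{\kappa_m}{m+1}$ of $\delta\bar{\mathfrak{z}}_q(-i)$. No gaps.
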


\section{Double $q$-shuffle relations}\label{sect:doubleqshuf}
\subsection{Double $q$-shuffle relations for modified qMZVs}

\noindent We can define a bijective map that changes the letter $z_n$ into the word $p^{n}y$:
\allowdisplaybreaks{
\begin{eqnarray*}
	{\frak r}: \wt Y^*&\tilde{\longrightarrow} & W \\
	z_{n_1}\cdots z_{n_k} &\longmapsto & p^{n_1}y\cdots  p^{n_k}y,
\end{eqnarray*}}
Above we have seen that:
\begin{equation*}
	\bar{\mathfrak{z}}_q(n_1,\ldots,n_k)	= \bar{\mathfrak{z}}_q^{\sshu}(p^{n_1}y\cdots p^{n_k}y)
								= \bar{\mathfrak{z}}_q^{\sqshu}(z_{n_1}\cdots z_{n_k}).
\end{equation*}
From this we obtain the double $q$-shuffle relations: 
\begin{eqnarray}\label{eq:qds}
	\bar{\mathfrak{z}}_q^{\sqshu}&=&\bar{\mathfrak{z}}_q^{\sshu}\circ{\frak r},\notag\\
	\bar{\mathfrak{z}}_q^{\sshu}(u)\bar{\mathfrak{z}}_q^{\sshu}(v)	 &=&\bar{\mathfrak{z}}_q^{\sshu}(u\shu v),\\
	\bar{\mathfrak{z}}_q^{\sqshu}(u')\bar{\mathfrak{z}}_q^{\sqshu}(v')&=&\bar{\mathfrak{z}}_q^{\sqshu}(u'\qshu v')\notag
\end{eqnarray}
for any words $u,v\in W$, respectively~ $u',v'\in \wt Y^*$.  From \eqref{eq:qds} we immediately deduce:
\begin{equation}
\label{eq:qds-bis}
	\bar{\mathfrak{z}}_q^{\sshu}\big({\frak r}(u')\shu{\frak r}(v')-{\frak r}(u'\qshu v')\big)=0
\end{equation}
for any $u',v'\in\wt Y^*$, or alternatively:
\begin{equation}
\label{eq:qds-ter}
	\bar{\mathfrak{z}}_q^{\sqshu}\big({\frak r}^{-1}(u)\qshu{\frak r}^{-1}(v)-{\frak r}^{-1}(u\shu v)\big)=0
\end{equation}
for any $u,v\in W$.
\subsection{Double $q$-shuffle relations for non-modified qMZVs}
We introduce a natural notion of weight for words\footnote{This should not bring confusion with the notion of weight for a Rota--Baxter operator introduced before.} both in $\wt Y^*$ and in $W$, which takes integer values of any sign:
\begin{equation}
w(z_{n_1}\cdots z_{n_k})=w(p^{n_1}y\cdots  p^{n_k}y):=n_1+\cdots+n_k.
\end{equation}
Let us denote by $\Cal Y$ (resp. $\Cal W$) the ${\mathbb Q}$-vector space spanned by $\wt Y^*$ (resp. $W$) endowed with the product $\qshu$ (resp. $\shu$). Both products are filtered, but not graded, with respect to the weight: if $\Cal Y^{(n)}$ (resp. $\Cal W^{(n)}$) stands for the linear span of words in $\wt Y^*$ (resp. $W$) of weight $\le n$ we have:
\begin{eqnarray*}
\Cal Y^{(n)}\qshu\Cal Y^{(m)}&\subseteq & \Cal Y^{(n+m)},\\
\Cal W^{(n)}\shu\Cal W^{(m)}&\subseteq & \Cal W^{(n+m)}.
\end{eqnarray*}
A graded version can be introduced as follows: introduce the Laurent polynomials in the indeterminate $h$ with coefficients in $\Cal Y$ (resp. $\Cal W$), and give weight $1$ to $h$. The swap ${\mathfrak r}$ is linearly extended to a linear isomorphism from $\Cal Y$ onto $\Cal W$, and then from $\Cal Y[h^{-1},h]$ onto $\Cal W[h^{-1},h]$ by extension of scalars. The products $\qshu$ and $\shu$ are extended $h$-bilinearly to $\Cal Y$ and $\Cal W$ respectively. The letter $q$ will stand here for $1-h$, for reasons which will become clear in the sequel. Consider the linear transformation:
\begin{align*}
H_q:\Cal Y[h^{-1},h]\ \longrightarrow \ & \Cal Y[h^{-1},h]\\
u'\ \longmapsto \ & h^{w(u')}u',
\end{align*}
and consider the analogous map on $\Cal W[h^{-1},h]$ which will be also called $H_q$. Let us now introduce two products $\qshu\!_q\,$ and $\shu\!_q\,$ on $\Cal Y[h^{-1},h]$ and $\Cal W[h^{-1},h]$ respectively, by means of:
\begin{align}
u'\qshu\!_q\, v':=\ &H_q^{-1}(H_qu'\qshu H_qv'),\\
u\shu\!_q\, v:=\ &H_q^{-1}(H_qu\shu H_qv).
\end{align}
We $h$-bilinearly extend the maps ${\mathfrak z}_q^{\sqshu}$ and ${\mathfrak z}_q^{\sshu}$ to the Laurent polynomials by sending $h$ to $1-q$. We can now display the double $q$-shuffle relations for non-modified $q$-multiple zeta values:
\begin{prop}
\rm{\begin{eqnarray}\label{eq:qds-nm}
	{\mathfrak{z}}_q^{\sqshu}&=&{\mathfrak{z}}_q^{\sshu}\circ{\frak r},\notag\\
	{\mathfrak{z}}_q^{\sshu}(u){\mathfrak{z}}_q^{\sshu}(v)	 &=&{\mathfrak{z}}_q^{\sshu}(u\shu\!_q\, v),\\
	{\mathfrak{z}}_q^{\sqshu}(u'){\mathfrak{z}}_q^{\sqshu}(v')&=&{\mathfrak{z}}_q^{\sqshu}(u'\qshu\!_q\, v')\notag
\end{eqnarray}}
\end{prop}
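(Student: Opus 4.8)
The plan is to deduce \eqref{eq:qds-nm} from the modified double $q$-shuffle relations \eqref{eq:qds} by conjugating all maps and products through the weight-grading operator $H_q$, so that no new combinatorics is required.

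First I would record the elementary properties of $H_q$. Since it acts on a word $u'$ by the scalar $h^{w(u')}$, the operator $H_q$ is a linear automorphism of $\Cal Y[h^{-1},h]$ (resp.\ of $\Cal W[h^{-1},h]$), with inverse $u'\mapsto h^{-w(u')}u'$; consequently $\qshu\!_q\,$ and $\shu\!_q\,$ are well defined, and by construction $H_q$ is an algebra isomorphism from $(\Cal Y[h^{-1},h],\qshu\!_q\,)$ onto $(\Cal Y[h^{-1},h],\qshu)$, and likewise on the $W$-side; in particular $\qshu\!_q\,$ and $\shu\!_q\,$ inherit commutativity and associativity, and from the filtration property $\Cal Y^{(n)}\qshu\Cal Y^{(m)}\subseteq\Cal Y^{(n+m)}$ (resp.\ its analogue for $\Cal W$) one sees that they even take values in the polynomial subring $\Cal Y[h]$ (resp.\ $\Cal W[h]$). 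Moreover, since $\frak r$ sends $z_{n_1}\cdots z_{n_k}$ to $p^{n_1}y\cdots p^{n_k}y$ and these two words have the same weight $n_1+\cdots+n_k$, the swap intertwines the grading operators: $\frak r\circ H_q=H_q\circ\frak r$.

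The heart of the matter is the identity $\mathfrak z_q^{\sshu}=\bar{\mathfrak z}_q^{\sshu}\circ H_q$ (and likewise $\mathfrak z_q^{\sqshu}=\bar{\mathfrak z}_q^{\sqshu}\circ H_q$), understood after $h$-bilinear extension of all maps and the specialization $h\mapsto 1-q$; this is precisely \eqref{weighting}, since on a word $u'$ of weight $w$ one has $\bar{\mathfrak z}_q^{\sshu}(H_q u')=\bar{\mathfrak z}_q^{\sshu}(h^{w}u')=(1-q)^{w}\bar{\mathfrak z}_q^{\sshu}(u')=\mathfrak z_q^{\sshu}(u')$, and both sides are $\mathbb Q[h^{-1},h]$-linear (the evaluation $h\mapsto 1-q$ being legitimate because $1-q$ is a unit of $\mathbb Q[[q]]$). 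Granting this, the first line of \eqref{eq:qds-nm} is immediate: $\mathfrak z_q^{\sqshu}=\bar{\mathfrak z}_q^{\sqshu}\circ H_q=(\bar{\mathfrak z}_q^{\sshu}\circ\frak r)\circ H_q=\bar{\mathfrak z}_q^{\sshu}\circ H_q\circ\frak r=\mathfrak z_q^{\sshu}\circ\frak r$, using the first line of \eqref{eq:qds} together with $\frak r\circ H_q=H_q\circ\frak r$. For the product relations, extend the shuffle identity of \eqref{eq:qds} $h$-bilinearly, making $\bar{\mathfrak z}_q^{\sshu}$ an algebra morphism $(\Cal W[h^{-1},h],\shu)\to q\mathbb Q[[q]]$; precomposing it with the algebra isomorphism $H_q\colon(\Cal W[h^{-1},h],\shu\!_q\,)\to(\Cal W[h^{-1},h],\shu)$ turns $\mathfrak z_q^{\sshu}=\bar{\mathfrak z}_q^{\sshu}\circ H_q$ into an algebra morphism for $\shu\!_q\,$, which is exactly $\mathfrak z_q^{\sshu}(u)\mathfrak z_q^{\sshu}(v)=\mathfrak z_q^{\sshu}(u\shu\!_q\, v)$. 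The quasi-shuffle relation follows word for word with $\shu,\shu\!_q\,$ replaced by $\qshu,\qshu\!_q\,$.

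The only step that genuinely needs care — and the one I would spell out in detail — is this passage from identities on words to identities on Laurent polynomials: one must check that multiplicativity of $\bar{\mathfrak z}_q^{\sshu}$ on words, together with the intertwining $H_q(u\shu\!_q\, v)=H_q u\shu H_q v$, still yields multiplicativity of $\mathfrak z_q^{\sshu}$ once $\shu\!_q\,$ and the evaluation maps have been extended $\mathbb Q[h^{-1},h]$-bilinearly and $h$ has been sent to $1-q$. This amounts to the routine fact that scalar extension along the ring homomorphism $\mathbb Q[h^{-1},h]\to\mathbb Q[[q]]$, $h\mapsto 1-q$, carries algebra morphisms to algebra morphisms; it is pure bookkeeping, but it is where whatever content there is resides, everything else being a formal transport of structure.
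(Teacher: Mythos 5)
Your proposal is correct and follows essentially the same route as the paper, whose proof simply declares the statement immediate from \eqref{modz}, \eqref{weighting}, \eqref{eq:qds} and the definitions of the products $\shu\!_q\,$ and $\qshu\!_q\,$; your write-up just makes explicit the transport of structure through $H_q$ (the identity $\mathfrak z_q^{\sshu}=\bar{\mathfrak z}_q^{\sshu}\circ H_q$ after $h\mapsto 1-q$, the intertwining $\frak r\circ H_q=H_q\circ\frak r$, and the bilinear-extension bookkeeping) that the authors leave implicit.
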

\begin{proof}
This is immediate from \eqref{modz}, \eqref{weighting}, \eqref{eq:qds} and the definitions of the two new products.
\end{proof}
\noindent Note that the two new products are now graded (with respect to the weight), and we have:
\begin{eqnarray*}
	(yv)\shu\!_q\, u=v\shu\!_q\, (yu)&=&y(v\shu\!_q\, u),						\\
			pv\shu\!_q\, pu&=&p(v\shu\!_q\, pu)+p(pv\shu\!_q\, u) - hp(v\shu\!_q\, u),	\\
			hdv\shu\!_q\, du&=&v\shu\!_q\, du + dv\shu\!_q\, u - d(v\shu\!_q\, u),		\\
	dv\shu\!_q\, pu=pu\shu\!_q\, dv&=&d(v\shu\!_q\, pu)- v\shu\!_q\, u+hdv\shu\!_q\, u, 		
\end{eqnarray*}
as well as:
\begin{equation}\label{qshu-q}
u'\qshu\!_q\, v'=T_q^{-1}(T_qu' *T_qv'),
\end{equation}
where the operator $T_q$ is defined by:
\begin{equation}
T_q(z_nv'):=(z_n-hz_{n-1})v'
\end{equation}
for any $n\in\mathbb Z$ and for any $v'\in\wt Y^*$. Strictly speaking, the operator $T_q^{-1}$ is defined on the space $\Cal Y[h^{-1},h]]$ of Laurent series with coefficients in $\Cal Y$, by the series:
\begin{equation}
T_q^{-1}(z_nv')=\sum_{k\ge 0}h^kz_{n-k}v',
\end{equation}
but it does not show up in the expression of the product $\qshu\!_q\,$ in terms of the ordinary quasi-shuffle product $*$. Indeed \eqref{qshu-q} yields:
\begin{equation}
z_mu'\qshu\!_q\, z_nv'=z_m\big(u'*T_q(z_nv')\big)+z_n\big(T_q(z_mu')*v'\big)+T_q\big(z_{m+n}(u'*v')\big).
\end{equation}
Finally, any Laurent polynomial in $h$ can be seen as a formal series in $q$. All results in this paragraph, except the grading, still hold over the ring $\mathbb Q[[q]]$ with $h=1-q$.
\subsection{Digression on Schlesinger $q$-MZVs}
The Schlesinger multiple zeta values are defined as follows:
\begin{eqnarray}
     \mathfrak{z}_q^S(n_1,\ldots,n_k)	
     			&=&(1-q)^w\underbrace{P_q \: \big[P_q \: [\cdots P_q}_{n_1}\: [\bar{y}
                                     \cdots
                                     \underbrace{P_q \: [P_q\:[ \cdots P_q}_{n_k}\: [\bar{y}]]]]
                                     \cdots ]\big](1)										\nonumber\\
                       	&=& \sum_{m_1 > \dots > m_k > 0}
				\frac{1}{[m_1]_q^{n_1}\cdots [m_k]_q^{n_k}}.  			\label{qMZVs-S}
\end{eqnarray}
They are defined for $|q|>1$ for $n_j\ge 1$ and $n_1\ge 2$, and converge to the corresponding classical MZVs for $q\to 1$ (by the monotone convergence theorem). A straightforward computation yields:
\begin{equation}
\mathfrak{z}_q^S(n_1,\ldots,n_k)=
\sum_{m_1>\cdots >m_k>0}\frac{(q^{-1})^{(m_1-1)n_1+\cdots+ (m_k-1)n_k}}{[m_1]_{q^{-1}}^{n_1}\cdots[m_k]_{q^{-1}}^{n_k}}.
\end{equation}
In other words, $\mathfrak{z}_q^S(n_1,\ldots,n_k)=\mathfrak{z}_{q^{-1}}^B(n_1,\ldots,n_k)$, where the superscript $B$ stands for the Bradley model (see \cite{Bradley1} and \cite{CEM}). Hence $\mathfrak{z}_q^S(n_1,\ldots,n_k)$ makes also sense as a formal series in $q^{-1}$ for $n_j\ge 0$ and $n_1\ge 1$. Now let us introduce the following notations:
\begin{eqnarray*}
\wt Y_1^*&:=&\{z_{n_1}\cdots z_{n_k}\in \wt Y^*,\,n_j\ge 1\},\\
\wt Y_2^*&:=&\{z_{n_1}\cdots z_{n_k}\in \wt Y^*,\,n_j\ge 1\hbox{ and } n_1\ge 2\},\\
\Cal Y_1&:=& \hbox {$\mathbb Q$-linear span of } \wt Y_1^*,\\
\Cal Y_2&:=& \hbox {$\mathbb Q$-linear span of } \wt Y_2^*,\\
 \mathfrak{z}_q^{S\sqshu}(z_{n_1}\cdots z_{n_k})&:=&	 \mathfrak{z}_q^S(n_1,\ldots,n_k),\, 	z_{n_1}\cdots z_{n_k}\in \wt Y_1^*.
\end{eqnarray*}
\begin{prop}
The following diagram commutes:
\rm{
\diagramme{
\xymatrix{
\Cal Y_1[[q^{-1}]]\ar[dr]^{\mathfrak{z}_q^{S\sqshu}}&\\
q^{-1}\Cal Y_2[[q^{-1}]]\ar[u]_{T_q} \ar[r]_{\mathfrak{z}_q^{\sqshu}}&\mathbb Q[[q^{-1}]]
}
}}
\end{prop}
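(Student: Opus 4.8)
The plan is to unwind the definitions and reduce everything to the single algebraic relation between the operator $T_q$ of this digression and the operator $T_q$ already used in the $q$-quasi-shuffle structure. First I would record the closed formula for $\mathfrak z_q^S$ in the variable $q^{-1}$ proven just above, namely
\[
\mathfrak z_q^S(n_1,\ldots,n_k)=\sum_{m_1>\cdots>m_k>0}\frac{(q^{-1})^{(m_1-1)n_1+\cdots+(m_k-1)n_k}}{[m_1]_{q^{-1}}^{n_1}\cdots[m_k]_{q^{-1}}^{n_k}},
\]
and compare it termwise with $\bar{\mathfrak z}_q(n_1,\ldots,n_k)=\sum q^{m_1}\prod(1-q^{m_j})^{-n_j}$ rewritten in $q^{-1}$: the point is that the numerator $q^{m_1}$ in $\bar{\mathfrak z}$ corresponds, after the substitution, to a discrepancy which is exactly killed by applying $T_q=z_n-hz_{n-1}$ in the first slot (with $h=1-q$). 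Concretely, $(1-q^{m})^{-n}-(1-q)(1-q^{m})^{-(n-1)}=(1-q^m)^{-n}\big(1-(1-q)(1-q^m)\big)$, and the bracket produces precisely the weight-shift needed to pass between the two numerators. This identity, already exploited in the proof of the $q$-quasi-shuffle $\qshu$ formula, is the computational heart of the argument.

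Next I would check that the maps in the diagram are well defined on the indicated spaces: $T_q$ sends $q^{-1}\Cal Y_2[[q^{-1}]]$ into $\Cal Y_1[[q^{-1}]]$ — one must verify that applying $z_n\mapsto z_n-hz_{n-1}$ in the leading slot to a word whose first index is $\ge 2$ lands in words with all indices $\ge 1$, which is immediate since only $n_1$ changes and $n_1-1\ge 1$; and that $\mathfrak z_q^{S\sqshu}$ is defined on $\Cal Y_1[[q^{-1}]]$ (clear from convergence/formal-series considerations of the preceding discussion) while $\mathfrak z_q^{\sqshu}$ is defined on $q^{-1}\Cal Y_2[[q^{-1}]]$, the factor $q^{-1}$ ensuring the target sits in $q^{-1}\mathbb Q[[q^{-1}]]\subset\mathbb Q[[q^{-1}]]$. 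Once this bookkeeping is in place the claim $\mathfrak z_q^{\sqshu}=\mathfrak z_q^{S\sqshu}\circ T_q$ is an equality of $\mathbb Q[[q^{-1}]]$-valued (or, after scalar extension, $h$-linear) maps, so it suffices to verify it on a basis, i.e.\ on each monomial word $z_{n_1}\cdots z_{n_k}\in q^{-1}\wt Y_2^*$.

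For a fixed word I would compute $\mathfrak z_q^{S\sqshu}\big(T_q(z_{n_1}\cdots z_{n_k})\big)=\mathfrak z_q^{S\sqshu}(z_{n_1}\cdots z_{n_k})-h\,\mathfrak z_q^{S\sqshu}(z_{n_1-1}z_{n_2}\cdots z_{n_k})$, expand both terms using the closed formula above, and collect the summand over a fixed chain $m_1>\cdots>m_k>0$. Factoring out the common denominator $\prod[m_j]_{q^{-1}}^{-n_j}$ (with the exponent on $[m_1]$ being $-n_1$ in the first term and $-(n_1-1)$ in the second), the chain contributes $(q^{-1})^{\sum(m_j-1)n_j}$ times $\big(1-(1-q)[m_1]_{q^{-1}}\big)=1-(1-q)\frac{1-q^{-m_1}}{1-q^{-1}}$; a one-line simplification shows this equals $q^{-m_1}\cdot(\text{correction})$ that, when reassembled, reproduces exactly the summand of $\bar{\mathfrak z}_q(n_1,\ldots,n_k)$ written in $q^{-1}$, i.e.\ of $\mathfrak z_q^{\sqshu}(z_{n_1}\cdots z_{n_k})$. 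Summing over all chains gives the desired equality, and since both sides are $\mathbb Q$-linear (and $h$-linear after extension) and agree on monomials, they agree on all of $q^{-1}\Cal Y_2[[q^{-1}]]$, proving commutativity of the diagram.

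I expect the only genuine obstacle to be the termwise numerator identity — getting the substitution $q\leftrightarrow q^{-1}$ exactly right so that the factor $(1-q)$ appearing in $T_q$ (through $h=1-q$) matches the $q$-number $[m_1]_{q^{-1}}=\frac{1-q^{-m_1}}{1-q^{-1}}$ and cancels the extra $q^{-1}$-power discrepancy between the Schlesinger numerator $1$ and the Ohno--Okuda--Zudilin numerator $q^{m_1}$. Everything else — linearity, the well-definedness of $T_q$ on the subspaces, reduction to monomials — is routine. It may also be worth remarking explicitly that the identity is really the ``dual'' of the relation $T_q z_n = z_n - h z_{n-1}$ introduced for $\qshu\!_q\,$, so that the proposition can be read as saying $T_q$ intertwines the two $q$-quasi-shuffle evaluations; but for the proof the direct computation above is the most transparent route.
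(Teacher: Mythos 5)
Your overall strategy is exactly the paper's: the proof is a termwise verification on monomial words, and the whole content is a single per-chain numerator identity. The paper disposes of it in one line, namely the ``obvious equality'' $1-(1-q)[m_1]_q=q^{m_1}$. So the route is right; but both places where you actually write down the central bracket are miswritten, because of normalization slips, and since this bracket \emph{is} the proof, it should be fixed. In the $q^{-1}$-presentation, applying $T_q$ in the first slot and comparing summands over a fixed chain $m_1>\cdots>m_k>0$, the ratio of the $(n_1-1)$-summand to the $n_1$-summand is \emph{not} $[m_1]_{q^{-1}}$: the exponent of $q^{-1}$ in the numerator also drops by $m_1-1$, so the ratio is $(q^{-1})^{-(m_1-1)}[m_1]_{q^{-1}}=q^{m_1-1}\,q^{1-m_1}[m_1]_q=[m_1]_q$, and the bracket is
\begin{equation*}
1-(1-q)[m_1]_q=q^{m_1},
\end{equation*}
which is precisely the numerator distinguishing $\mathfrak{z}_q$ (bottom arrow of the diagram, the \emph{non-modified} value with summand $q^{m_1}\prod_j[m_j]_q^{-n_j}$) from $\mathfrak{z}_q^S$ (summand $\prod_j[m_j]_q^{-n_j}$). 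Your version $1-(1-q)[m_1]_{q^{-1}}=1+q-q^{1-m_1}$ is not the needed factor. Likewise your earlier ``computational heart'' $(1-q^{m})^{-n}-(1-q)(1-q^{m})^{-(n-1)}=(1-q^m)^{-n}\bigl(1-(1-q)(1-q^m)\bigr)$ mixes the modified and non-modified normalizations: with $(1-q^{m})$-denominators the factor $h=1-q$ coming from $T_q$ is absorbed by the change of the weight prefactor $(1-q)^w\mapsto(1-q)^{w-1}$, so the correct coefficient there is $1$ and the identity is simply $1-(1-q^{m_1})=q^{m_1}$; with $[m]_q$-denominators the coefficient is $1-q$ and one uses the displayed equality above. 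Relatedly, the target of the bottom arrow is $\mathfrak{z}_q^{\sqshu}$, not $\bar{\mathfrak{z}}_q$; keeping the two normalizations apart removes the discrepancy you flagged as the ``only genuine obstacle.''

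One smaller bookkeeping point: the role of the prefactor $q^{-1}$ in the domain $q^{-1}\Cal Y_2[[q^{-1}]]$ is not to control the target, but to make $T_q$ itself land in $\Cal Y_1[[q^{-1}]]$: the scalar $h=1-q$ is not a formal series in $q^{-1}$, whereas $q^{-1}h=q^{-1}-1\in\mathbb Q[[q^{-1}]]$ (the condition $n_1\ge 2$ handles the letters, as you say). With these corrections your argument coincides with the paper's proof.
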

\begin{proof}
This is a straightforward formal computation using the obvious equality:
$$1-(1-q)[m_1]_q=q^{m_1}.$$
\end{proof}
\ignore{
\redtext{Note that  $\mathfrak{z}_q^{\sqshu}$ is multiplicative for the product $\qshu_q$ whereas $ \mathfrak{z}_q^{S\sqshu}$ is multiplicative for the ordinary quasi-shuffle product $*$. Hence it is tempting to transport the deconcatenation coproduct by means of $T_q^{-1}$, thus grasping a Hopf algebra structure including the product $\qshu_q$... But this is only formal in $q$. We manipulate here series in both $q$ and $q^{-1}$ unlimited in both directions, which is desperate with respect to any reasonable algebraic structure.}
}
\subsection{$q$-analog of regularized double $q$-shuffle relations}
\label{ssect:reg-doubleqshuf}
In the following we restrict to non-negative indices. Thanks to the $q$-regularization, we observe that the double $q$-shuffle relation yields:
\allowdisplaybreaks{
\begin{eqnarray*}
	 \bar{\mathfrak{z}}_q(1) \bar{\mathfrak{z}}_q(2) &=&  \bar{\mathfrak{z}}_q(1,2) + \bar{\mathfrak{z}}_q(2,1) + \bar{\mathfrak{z}}_q(3)
	  - \bar{\mathfrak{z}}_q(1,1) - \bar{\mathfrak{z}}_q(2,0) - \bar{\mathfrak{z}}_q(2)\\
	 &=& \bar{\mathfrak{z}}_q(1,2) + 2 \bar{\mathfrak{z}}_q(2,1) - \bar{\mathfrak{z}}_q(2,0) - \bar{\mathfrak{z}}_q(1,1).
\end{eqnarray*}}
From which we obtain the relation:
\begin{equation}\label{qEuler}
	\bar{\mathfrak{z}}_q(3) - \bar{\mathfrak{z}}_q(2) = \bar{\mathfrak{z}}_q(2,1).
\end{equation}
For the proper $q$MZVs defined in (\ref{2qMZVs}) this gives: 
$$
	\mathfrak{z}_q(3) - (1-q)\mathfrak{z}_q(2) = \mathfrak{z}_q(2,1),
$$
which in the limit where $q$ goes to one, reduces to (\ref{euler}), i.e., the classical relation $\zeta(2,1)=\zeta(3)$.  Interestingly enough, Equation \eqref{qEuler} is equivalent to an algebraic identity established by E. T. Bell in the Thirties of the last century (see \cite{B36}, Page 158 thereof)\footnote{We thank W. Zudilin for kindly drawing our attention to reference \cite{B36}.}.

In the following we denote by $\mathbb{Q}. \wt Y^*_{\smop{conv}}$ the free associative algebra generated by the set of words $\wt Y^*_{\smop{conv}}$, the submonoid of words $z_{n_1}\cdots z_{n_k}$ with letters in $\wt Y_+:=\{z_n,\,n\in\mathbb N\}$, and $n_1 >1$. We arrive at a $q$-analog of Hoffman's regularization relations. By virtue of the $q$-regularization, this appears to be a particular case of \eqref{eq:qds-bis}:
\begin{prop}
\label{thm:q-Hoffman}
For any $v \in  \mathbb{Q}. \wt Y^*_{\smop{conv}}$ we have:
{\rm
\begin{equation}
\label{q-reg1}
	\bar{\mathfrak{z}}_q^{\sshu}\big(py \shu {\frak r}(v)-{\frak r}(z_1\qshu v)\big)=0,
\end{equation}
respectively
\begin{equation}
\label{q-reg2}
	\mathfrak{z}_q^{\sshu}\big(py \shu {\frak r}(v)-{\frak r}(z_1\qshu v)\big)=0.
\end{equation}
}
\end{prop}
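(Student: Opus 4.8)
The statement \eqref{q-reg1} is a direct specialization of the shuffle--vs--quasi-shuffle relation \eqref{eq:qds-bis}. Indeed, \eqref{eq:qds-bis} asserts that $\bar{\mathfrak{z}}_q^{\sshu}\big({\frak r}(u')\shu{\frak r}(v')-{\frak r}(u'\qshu v')\big)=0$ for \emph{all} $u',v'\in\wt Y^*$, without any convergence restriction, since in the $q$-setting the modified $q$MZVs make sense for arbitrary integer arguments. Thus the first thing I would do is simply take $u'=z_1$ (a single letter, with ${\frak r}(z_1)=p^1 y = py$) and $v'\in\mathbb{Q}.\wt Y^*_{\smop{conv}}$ arbitrary; substituting into \eqref{eq:qds-bis} yields \eqref{q-reg1} verbatim. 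So at the purely formal level there is essentially nothing to prove beyond invoking \eqref{eq:qds}.

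\textbf{The content of the statement.} The point worth spelling out is \emph{why} this deserves to be called a ``$q$-analog of Hoffman's regularization relations'': in the classical case one needs the Boutet de Monvel--Zagier comparison (Theorem~\ref{bmz}) and the extension of $\zeta_{\sshu}$, $\zeta_{\sqshu}$ by a formal parameter $\theta$ to make sense of the divergent word $x_1$ (resp. $y_1$), and the relation \eqref{reg} emerges because the $\theta$-dependence cancels in the combination $x_1\shu{\frak s}(u)-{\frak s}(y_1\qshu u)$. Here, by contrast, $\bar{\mathfrak{z}}_q(1)=\bar y(q)=q/(1-q)$ is a perfectly convergent object, so no regularization parameter and no analog of $\rho$ is needed; the relation \eqref{q-reg1} is an \emph{honest} identity among convergent $q$MZVs, even though — as in the classical case — the left-hand side, before expansion, involves the word $py$ which encodes the would-be divergent $\bar{\mathfrak{z}}_q(1)$ combined with convergent words. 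I would make this remark explicit so the reader sees that the role of ``regularization'' is played for free by the $q$-deformation, and that \eqref{q-reg1} is therefore strictly weaker (in terms of what is needed to prove it) than its classical counterpart, though formally parallel.

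\textbf{The non-modified version.} For \eqref{q-reg2} I would deduce it from \eqref{q-reg1} by the weighting relation \eqref{weighting}: for a homogeneous word of weight $w$ one has $\mathfrak{z}_q^{\sshu}(\cdot)=(1-q)^w\bar{\mathfrak{z}}_q^{\sshu}(\cdot)$, and one must check that both terms $py\shu{\frak r}(v)$ and ${\frak r}(z_1\qshu v)$ are combinations of words of the \emph{same} weight, namely $1+w(v)$. For ${\frak r}(z_1\qshu v)$ this is clear because $\qshu$ is not weight-graded but $\bar{\mathfrak{z}}_q^{\sshu}\circ{\frak r}$ agrees with $\bar{\mathfrak{z}}_q^{\sqshu}$, and more cleanly one should pass to the graded picture: using the operators $H_q$, $T_q$ and the graded products $\shu\!_q\,$, $\qshu\!_q\,$ introduced in Section~\ref{sect:doubleqshuf}, the relation \eqref{eq:qds-nm} gives $\mathfrak{z}_q^{\sshu}\big({\frak r}(z_1)\shu\!_q\,{\frak r}(v)-{\frak r}(z_1\qshu\!_q\, v)\big)=0$, and specializing $h=1-q$ recovers \eqref{q-reg2}. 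The only mild subtlety — and the place I would be careful — is bookkeeping the powers of $h=1-q$ that $H_q^{-1}$ and $T_q$ introduce, so that the weight-homogeneity is genuine; this is routine but is the one spot where a sign or exponent could slip. Once that is checked, \eqref{q-reg2} follows immediately. I expect the entire proof to be short: the ``hard part'' is already done in establishing \eqref{eq:qds} and \eqref{eq:qds-nm}, and here one is only instantiating a general relation at a specific pair of words.
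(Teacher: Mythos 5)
For \eqref{q-reg1} your argument is exactly the paper's: the proposition is obtained by specializing \eqref{eq:qds-bis} to $u'=z_1$, ${\frak r}(z_1)=py$, and the paper itself does no more than record that this is a particular case of the double $q$-shuffle relation; your remark that no analogue of $\rho$ or of the $\theta$-extension is needed is precisely the point made in the surrounding text.

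The gap is in your treatment of \eqref{q-reg2}. The homogeneity you propose to check is false: the products $\shu$ and $\qshu$ only \emph{filter} the weight, so $py\shu{\frak r}(v)-{\frak r}(z_1\qshu v)$ is in general a mixture of weights. Concretely, for $v=z_2$ one finds $py\shu ppy-{\frak r}(z_1\qshu z_2)=ppypy-pppy+ppy$, a combination of weights $3$ and $2$; applying $\mathfrak{z}_q^{\sshu}$ to it gives $\mathfrak{z}_q(2,1)-\mathfrak{z}_q(3)+\mathfrak{z}_q(2)=q\,\mathfrak{z}_q(2)\neq 0$, whereas the correct non-modified identity, displayed just before the proposition, is $\mathfrak{z}_q(2,1)-\mathfrak{z}_q(3)+(1-q)\mathfrak{z}_q(2)=0$. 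Accordingly, your fallback via \eqref{eq:qds-nm} proves $\mathfrak{z}_q^{\sshu}\big(py\shu\!_q\,{\frak r}(v)-{\frak r}(z_1\qshu\!_q\,v)\big)=0$, i.e.\ the graded version in which the weight-dropped words carry explicit powers of $h=1-q$; this is the reading under which the lower-depth terms disappear as $q\to 1$, but it is \emph{not} the literal combination written with $\shu$ and $\qshu$, so ``specializing $h=1-q$'' does not recover \eqref{q-reg2} verbatim (the proposition as printed should itself be understood in this weighted form, as the example preceding it indicates — a point the paper's one-line proof also leaves implicit). To close your argument you should either state and prove the $\shu\!_q\,/\qshu\!_q\,$ form via \eqref{eq:qds-nm}, or rescale \eqref{q-reg1} word by word using \eqref{weighting} and keep the resulting $(1-q)$-prefactors on the lower-weight words; the exponent bookkeeping you dismissed as routine is exactly where the statement, read naively, breaks down.
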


Since terms of depth smaller than $|v|+1$ disappear in the limit $q \to 1$, identity (\ref{q-reg2}) reduces to Hoffman's regularization relations (\ref{reg}) for MZVs.\\

As there are no regularization issues involved, no correction analogous to $\rho$ (with the notations of Section \ref{sect:algMZVs}) is needed to go from the $q$-quasi-shuffle picture to the $q$-shuffle picture or vice-versa. The first nontrivial manifestation of the correction $\rho$ for classical multiple zeta values appears when computing $\zeta^{\sqshu}(1)\zeta^{\sqshu}(1)$ and $\zeta^{\sshu}(1)\zeta^{\sshu}(1)$. Using quasi-shuffle and shuffle relations respectively we get:
\allowdisplaybreaks{
\begin{eqnarray*}
	\theta^2=\zeta^{\sqshu}(1)\zeta^{\sqshu}(1)	&=&2\zeta^{\sqshu}(1,1)+\zeta(2),\\
	\theta^2=\zeta^{\sshu}(1)\zeta^{\sshu}(1)		&=&2\zeta^{\sshu}(1,1),
\end{eqnarray*}}
hence:
\begin{equation}
\label{one-one}
	\zeta^{\sshu}(1,1)-\zeta^{\sqshu}(1,1)=\frac 12\zeta(2).
\end{equation}
It is instructive to compute the $q$-analog with \eqref{one-one} in mind, using $q$-quasi-shuffle and $q$-shuffle relations respectively:
\begin{eqnarray*}
	\bar{\mathfrak{z}}_q(1)\bar{\mathfrak{z}}_q(1)	&=&2\bar{\mathfrak{z}}_q(1,1)+\bar{\mathfrak{z}}_q(2)-2\bar{\mathfrak{z}}_q(1,0)
												-\bar{\mathfrak{z}}_q(1),\\
										&=&2\bar{\mathfrak{z}}_q(1,1)-\bar{\mathfrak{z}}_q(1,0).
\end{eqnarray*}
Hence we get:
\begin{equation*}
	\bar{\mathfrak{z}}_q(2)=\bar{\mathfrak{z}}_q(1,0)+\bar{\mathfrak{z}}_q(1),
\end{equation*}
which in turn gives:
\begin{equation*}
	{\mathfrak{z}}_q(2)=(1-q)\big({\mathfrak{z}}_q(1,0)+{\mathfrak{z}}_q(1)\big).
\end{equation*}
Considering the limit for $q\to 1$, we get:
\begin{equation*}
	\mopl{lim}_{q\to 1}(1-q)\big({\mathfrak{z}}_q(1,0)+{\mathfrak{z}}_q(1)\big)=\frac{\pi^2}{6}.
\end{equation*}


\subsection{Double $q$-shuffle relation and $\delta$-derivation terms}
\label{ssect:deriv-doubleqshuf}

First observe that:
\allowdisplaybreaks{
\begin{equation*}
	 \bar{\mathfrak{z}}_q(2) \bar{\mathfrak{z}}_q(b) =  \bar{\mathfrak{z}}_q(2,b) + \bar{\mathfrak{z}}_q(b,2) + \bar{\mathfrak{z}}_q(b+2)
	  - \bar{\mathfrak{z}}_q(b,1) - \bar{\mathfrak{z}}_q(2,b-1) - \bar{\mathfrak{z}}_q(b+1).
\end{equation*}}
Next, we deduce from:
\allowdisplaybreaks{
\begin{eqnarray*}
	\bar{\mathfrak{z}}_q(2)\bar{\mathfrak{z}}_q(b) &=&
	\sum_{l=0}^{1}\ \sum_{k=0}^{1-l}  
		(-1)^k {l+b-1 \choose b-1}{b \choose k} \bar{\mathfrak{z}}_q(b+l,2-l-k) 						\\
	& & \hspace{0.5cm}
	 	+ \sum_{l=0}^{b-1}\ \sum_{k=0}^{\min(2,b-1-l)}  
		(-1)^k {l+1 \choose 1}{2 \choose k} \bar{\mathfrak{z}}_q(2+l,b-l-k) 				\nonumber\\
	& & \hspace{1cm}
		- \sum_{k=1}^{2} \beta_{2-k} \bar{\mathfrak{z}}_q(2+b-k)
			+ \sum_{j=1}^{1} \alpha_{1+b-j} \delta \bar{\mathfrak{z}}_q(1+b-j),		
\end{eqnarray*}}
that for any $b>2$, and (\ref{coeff}):
\allowdisplaybreaks{
\begin{eqnarray*}
	 \delta\bar{\mathfrak{z}}_q(b) 
	&=& (b+1)\bar{\mathfrak{z}}_q(b+1) -  (b-1) \bar{\mathfrak{z}}_q(b) - \bar{\mathfrak{z}}_q(b+2)\\
	& & +2b\bar{\mathfrak{z}}_q(b+1,1) + (1-b) \bar{\mathfrak{z}}_q(b,1) - \bar{\mathfrak{z}}_q(2,b-1) + \bar{\mathfrak{z}}_q(2,b-2)\\
	& & \hspace{0.5cm}
	 	+ \sum_{l=1}^{b-2}\ \sum_{k=0}^{\min(2,b-1-l)}  
		(-1)^k {l+1 \choose 1}{2 \choose k} \bar{\mathfrak{z}}_q(2+l,b-l-k) .				\nonumber
\end{eqnarray*}}


\section*{Concluding remarks}

In this work we have further developed the results from our previous work \cite{CEM}. The main goal was to unfold the double $q$-shuffle structure underlying the $q$MZVs proposed by Y.~Ohno. J.~Okuda and W.~Zudilin in \cite{OhOkZu}. The main observation is that these $q$MZVs can be considered with integer arguments of any sign --the $q$-parameter figures as a proper regulator--, and that the double $q$-shuffle structure makes sense in this extended setting.\\

However, we have not touched upon two important questions. First, we did not find a proper bi- or Hopf algebra for the $q$-shuffle. Note that the $q$-quasi-shuffle we found can be reduced to the usual quasi-shuffle product described by Hoffman, and as such comes with the common deconcatenation coproduct. As a remark, we may also add that we are still trying to find a proper combinatorial interpretation of the $q$-shuffle product. Second, the extension to negative arguments has no good limit for $q$ going to one. We are lacking a proper renormalization procedure, that would allow us to give meaning to the $q \to 1$ limit outside the usual convergence constraints for classical MZVs. Both these questions are to be addressed in a forthcoming work.


\end{document}